\newtheorem{theorem}{Theorem}[section]
\newtheorem{proposition}[theorem]{Proposition}
\newtheorem{lemma}[theorem]{Lemma}
\newtheorem{claim}[theorem]{Claim}
\theoremstyle{definition}
\newtheorem{definition}[theorem]{Definition}
\newtheorem{remark}[theorem]{Remark}
\newtheorem{question}[theorem]{Question}
\theoremstyle{theorem}
\newcommand{\N}{\mathbb{N}}
\newcommand{\Z}{\mathbb{Z}}
\newcommand{\R}{\mathbb{R}}
\newcommand{\C}{\mathbb{C}}
\title[Strong corks derived from the Akbulut cork]{Strong corks derived from the Akbulut cork}
\author[Tateaki Mukohara]{Tateaki Mukohara}
\address{Department of Pure and Applied Mathematics, Graduate School of Information Science and Technology, The University of Osaka, Japan}
\email{mukohara.tateaki@ist.osaka-u.ac.jp}
\begin{document}

\begin{abstract}
  We prove that the boundaries of the corks introduced by Auckly, Kim, Melvin, and Ruberman and by Tange are strong corks. Furthermore, we prove that any nontrivial linear combination of them yields a strong cork, and we construct a larger family of strong corks that generalizes them. These results rely on the instanton-theoretic invariant \(r_s\) introduced by Alfieri, Dai, Mallick, and Taniguchi.
\end{abstract}

\maketitle

\section{Introduction}
In \cite{Akbulut1991AFC}, Akbulut provided the first example of a \(cork\), which is defined as a compact, contractible 4-manifold \(W\) equipped with an orientation-preserving diffeomorphism \(\tau\) on its boundary that does not extend to a diffeomorphism of \(W\). (Some authors require \(W\) to be Stein, such as in \cite{zbMATH05660539}.) By Freedman's theorem \cite{Freedman1982TheTO}, \(\tau\) extends over \(W\) as a homeomorphism. Akbulut proved that the smooth structure of \(K3\# \overline{\C P^2}\) can be changed by cutting out an embedded copy of \(W\) and regluing it via \(\tau\). Such an operation is called a \(cork\ twist\).
More generally, Matveyev \cite{Matveyev1995ADO} and Curtis-Freedman-Hsiang-Stong \cite{Curtis1996ADT} showed that any two exotic smooth structures on a closed, simply-connected 4-manifold are related by a cork twist. Numerous examples of corks have been constructed; see, for example, \cite{zbMATH05660539} and \cite{Gompf2016InfiniteOC}.
Corks are typically detected by embedding them into a closed 4-manifold with a non-vanishing smooth 4-manifold invariant, or by utilizing Stein structures and the adjunction inequality \cite{zbMATH01057263}.

Recently, Lin, Ruberman, and Saveliev introduced the notion of a \(strong\ cork\) in \cite{Lin2018OnTF}. A strong cork is defined as a homology 3-sphere \(Y\) which bounds at least one compact, contractible 4-manifold, equipped with an orientation-preserving involution \(\tau\) that does not extend as a diffeomorphism over \(any\) smooth homology 4-ball bounded by \(Y\). In \cite{Lin2018OnTF}, they proved that the boundary of the Akbulut cork is a strong cork using monopole Floer homology. Subsequently, in \cite{Dai2020CorksIA}, Dai, Hedden, and Mallick exhibited many other examples of strong corks via Heegaard Floer theory. Some of them were later used to construct new closed exotic 4-manifolds in \cite{arXiv:2307.08130}. 
Further methods for detecting strong corks have also been developed using family Seiberg-Witten theory \cite{Konno2023FromDT} and instanton theory \cite{arXiv:2309.02309}.

Given a cork, we get an absolutely exotic pair of 4-manifolds as a consequence of the work of Akbulut-Ruberman \cite{Akbulut2014AbsolutelyEC}. 
In particular, given a strong cork \((Y,\tau)\), there exists a homology cobordism \(W\) from \(Y\) to another homology 3-sphere \(Y'\) such that, for \(any\) compact contractible 4-manifold \(X\) bounded by \(Y\), \(X\cup_{\textrm{id}}W\) and \(X\cup_{\tau}W\) form an exotic pair. Note that \(X\cup_{\textrm{id}}W\) is homotopy equivalent to \(X\); see also \cite{arXiv:2501.18584}.
This is one of the reasons why strong corks are important in the study of exotic 4-manifolds. 
In \cite{Kang2022OneSI}, Kang constructed a cork \((C,\tau)\) such that \(\tau\) does not extend over \(C\#(S^2\times S^2)\) as a diffeomorphism. Combined with the consequence in \cite{Akbulut2014AbsolutelyEC}, it is revealed that there exists an absolutely exotic pair that remains absolutely exotic after one stabilization. See also \cite{Guth2024InvariantSP}. In addition, (strong) corks have applications in the study of exotic disks and exotic surfaces; see, for example, \cite{zbMATH07615809, Hay21, Dai2022EquivariantKA, hayden2023stabilizationclosedknottedsurfaces, arXiv:2409.07287}.

In light of the connections with exotic phenomena mentioned above, it is important to determine whether a cork is strong. We pose the following question:

\begin{question}
    Let \((C,\tau)\) be a cork. Is the boundary \((\partial C,\tau)\) a strong cork?
\end{question}

It has been shown that many corks, including Akbulut-Yasui corks \((W_n,\tau)\) in \cite{zbMATH05660539} and the first member \((\overline{W}_1,\tau)\) of the positron corks, are strong. On the other hand, examples of non-strong corks were also constructed in \cite{arXiv:2005.08928}. 
The following are examples of known order-two corks for which it has not been explicitly stated whether they are strong:
\begin{enumerate}
    \item The positron cork \((\overline{W}_n,\tau)\) for \(n\geq2\). This family was introduced by Akbulut-Matveyev in \cite[Figure 2]{Akbulut2000ACD}, where \((\overline{W}_1,\tau)\) is the first positron mentioned above. See also \cite[Figure 1]{zbMATH05660539}.
    \item The corks \((C_n,\tau)\) and \((\overline{C}_n,\tau)\), displayed in Figure \ref{C_n}, for even integers \(n\geq 2\). These families were introduced by Auckly-Kim-Melvin-Ruberman in \cite[Figure 8]{Auckly2014StableII}. 
    (For \cite{Auckly2014StableII}, see also \cite{zbMATH06665051}.)
    Note that \((C_1,\tau)\) corresponds to the Akbulut cork \((W_1,\tau)\), and \((\overline{C}_1,\tau)\) corresponds to the first positron \((\overline{W}_1,\tau)\). For odd \(n\), it is shown in \cite{Dai2020CorksIA} that \((\partial C_n,\tau)\) and \((\partial \overline{C}_n,\tau)\) are strong corks.
    The family of corks \((C_n,\tau)\) was used to produce infinitely many exotic pairs of 4-manifolds in \cite{arXiv:1505.02551}. 
    \item The corks \((C(m),\tau)\), displayed in Figure \ref{C(m)}, for \(m\geq 2\). This family was introduced by Tange in \cite[Figure 1]{Tange2016FiniteOC}. Note that \((C(1),\tau)\) corresponds to the Akbulut cork. This family of corks \((C(m),\tau)\) was used to produce many examples of finite order corks in \cite{Tange2016FiniteOC}.
\end{enumerate}

\begin{figure}[h!]
\begin{center}
\includegraphics[width=5.3in]{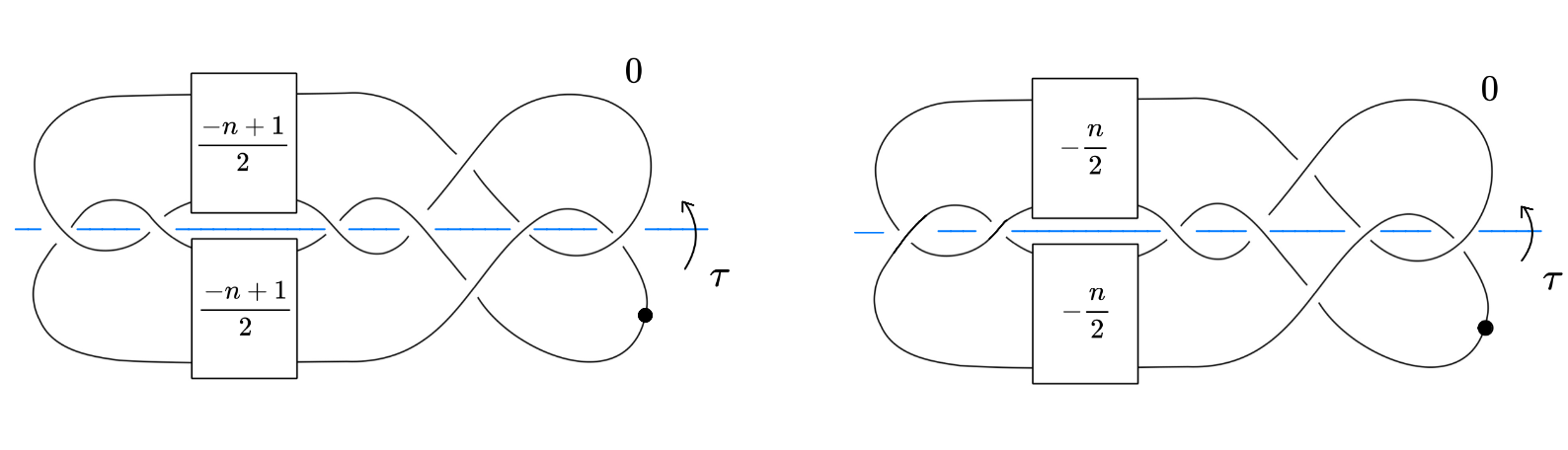}
\caption{Left: \((C_n,\tau)\). Here, the box labeled \(\dfrac{-n+1}{2}\) represents \(n-1\) negative half twists. Right: \((\overline{C}_n,\tau)\).}
\label{C_n}
\end{center}
\end{figure}

\begin{figure}[h!]
\begin{center}
\includegraphics[width=2.7in]{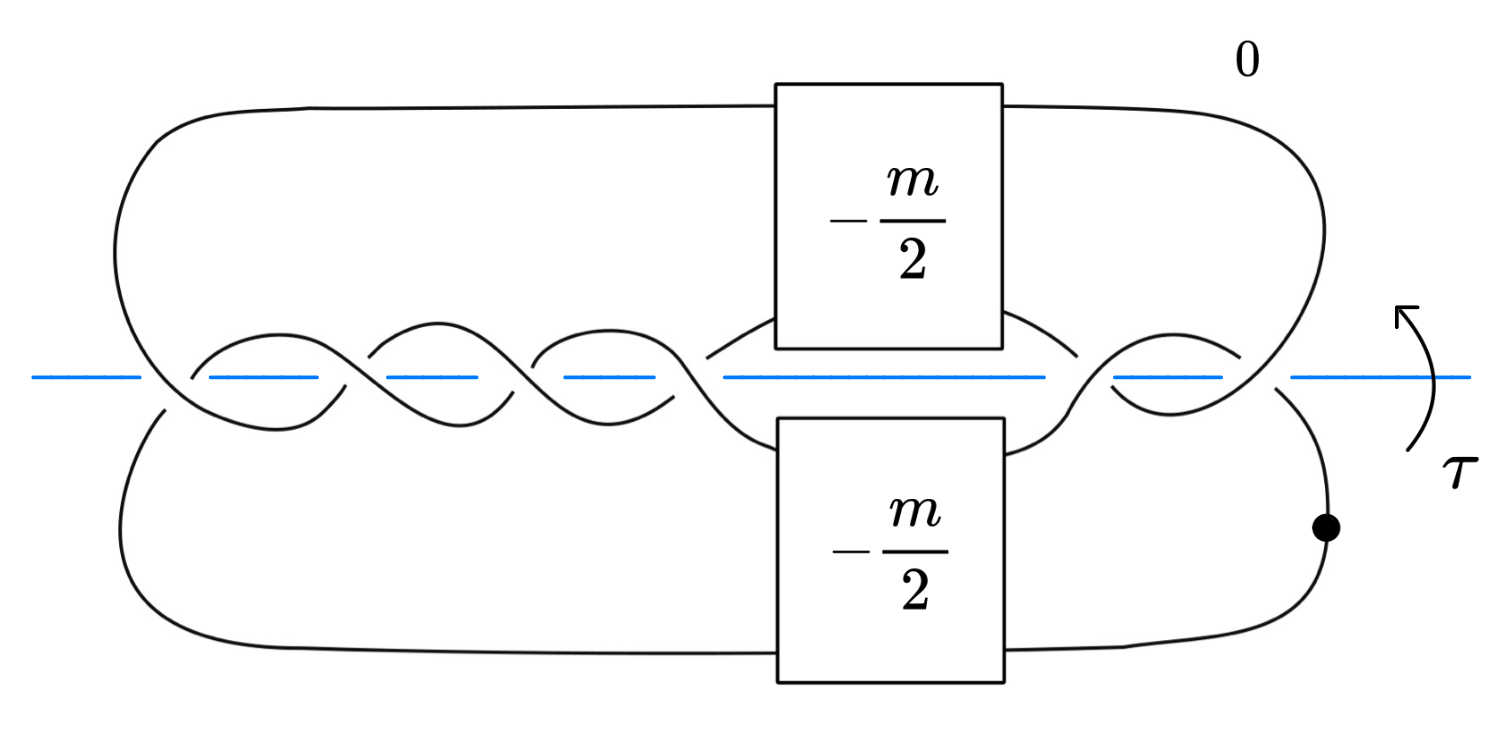}
\caption{\((C(m),\tau)\).}
\label{C(m)}
\end{center}
\end{figure}

To the best of the author's knowledge, it is unknown whether the boundaries of these corks are strong.  In this paper, we address this question for specific families of corks. The families of strong corks constructed in this paper are derived from the Akbulut cork; the first member of each family is given by the boundary of the Akbulut cork.

\subsection{Main results}

First, we construct an infinite family of strong corks that generalizes the aforementioned examples of corks \((C_n,\tau)\) and \((C(m),\tau)\):
\begin{theorem}\label{thm_Z_m,n}
    For any \(m,n\in\N\), let \(Z_{m,n}\) be the 3-manifold obtained by surgery on the two-component link displayed in Figure \ref{Z_m,n}. Equip \(Z_{m,n}\) with the indicated involution \(\tau\). Then the following statements hold:
    \begin{itemize}
        \item If \(m\leq m', n\leq n'\) and \((m,n)\neq (m',n')\), then there exists a simply-connected, equivariant, negative-definite cobordism from \((Z_{m',n'},\tau)\) to \((Z_{m,n},\tau)\).
        \item For any \(m,n\in\N\), \(r_0(Z_{m,n},\tau)<\infty\) and thus \((Z_{m,n},\tau)\) is a strong cork. 
    \end{itemize}
    Furthermore, any nontrivial linear combination of elements in either \(\{(Z_{m,n},\tau)\}_{m\in\N}\) (fixing \(n\)) or \(\{(Z_{m,n},\tau)\}_{n\in\N}\) (fixing \(m\)) yields a strong cork. Specifically, for any sequence of integers \((a_1,a_2,\cdots ,a_k)\neq (0,0,\cdots, 0)\), the equivariant connected sums
    \[
    (a_1 Z_{1,n}\# a_2 Z_{2,n}\#\cdots \# a_k Z_{k,n}, \tau)\ and\ \ (a_1 Z_{m,1}\# a_2 Z_{m,2}\#\cdots \# a_k Z_{m,k}, \tau)
    \]
    are strong corks. Here, for \(a<0\), the notation \(aZ\) denotes \(|a|(-Z)\).
\end{theorem}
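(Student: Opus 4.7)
The plan is to adapt the involutive Heegaard Floer approach of Dai--Hedden--Mallick \cite{Dai2020CorksIA}, who treated the analogous case of $(C_n,\tau)$ and $(\overline{C}_n,\tau)$ for odd $n$. The main invariant is the local equivalence class of the $\iota$-complex $(CF^-(Z_{m,n}),\tau_*)$ in the local equivalence group $\mathfrak{I}$: if this class is nontrivial, then $\tau$ cannot extend as a diffeomorphism over any smooth (integer, or even rational) homology $4$-ball bounded by $Z_{m,n}$, which is exactly the strong cork property.

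To compute $CF^-(Z_{m,n})$ together with the action of $\tau_*$, I would first perform handle slides on the Kirby diagram to put $Z_{m,n}$ in a symmetric form compatible with $\tau$, and then apply either the Manolescu--Ozsv\'{a}th link surgery formula or a direct Heegaard diagram computation, building on the fact that the analogous computation has already been done in \cite{Dai2020CorksIA} for the slices $(C_n,\tau)$ with $n$ odd. Because $Z_{m,n}$ is designed to generalize both $C_n$ (along one parameter) and $C(m)$ (along the other), I expect $CF^-(Z_{m,n})$ to have a staircase-type structure whose length parameters are governed by $m$ and $n$ independently. The action $\tau_*$ is pinned down at the chain level by the naturality of involutive Heegaard Floer homology once the Heegaard diagram has been made $\tau$-symmetric. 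To conclude nontriviality in $\mathfrak{I}$, I would verify that $(CF^-(Z_{m,n}),\tau_*)$ is not locally equivalent to the trivial $\iota$-complex of $S^3$, using either the involutive $d$-invariants $\underline{d},\overline{d}$ or, if necessary, the more refined Hendricks--Hom--Lidman--Stoffregen numerical invariants.

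For the linear combination statement, equivariant connected sum corresponds to tensor product of $\iota$-complexes and orientation reversal to duals, so $\mathfrak{I}$ is an abelian group under these operations and the claim reduces to linear independence of the classes $[(Z_{m,n},\tau)]\in\mathfrak{I}$, either for fixed $n$ and varying $m$, or for fixed $m$ and varying $n$. The strategy is to construct, in each of the two directions, a sequence of homomorphisms $\varphi_k\colon \mathfrak{I}\to\mathbb{Z}$ that vanish on the classes $[(Z_{j,n},\tau)]$ for $j<k$ but are nonzero on $[(Z_{k,n},\tau)]$; linear independence then follows by a standard triangularity argument, and the homomorphisms themselves are supplied by the Hendricks--Hom--Lidman--Stoffregen framework applied to the staircase length data. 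The main obstacle throughout is the chain-level computation: producing a presentation of the $\iota$-complex of $Z_{m,n}$ explicit enough to evaluate these numerical invariants, while tracking how the parameters $m$ and $n$ scale the relevant staircase lengths independently. Once this chain-level description is in hand, both the strong cork property and the linear independence of connected sums follow formally.
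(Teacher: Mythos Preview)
Your proposal has a genuine gap, and the paper in fact takes a different route precisely because the Heegaard Floer approach runs into an obstruction here.

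First, what Dai--Hedden--Mallick actually do for odd $n$ is not a chain-level computation of $(CF^-,\tau_*)$: they construct equivariant negative-definite cobordisms and use the monotonicity of $h_\tau$ and $h_{\iota\circ\tau}$ under such cobordisms (Theorem~\ref{thm3.2}). The paper's Remark after the proof explains why this cobordism method does not extend: the natural equivariant $(-1)$-cobordism from $(Z_{m,n+1},\tau)$ to $(Z_{m,n},\tau)$ is spin$^c$-\emph{conjugating} (the attaching circle is strongly invertible, not periodic), so Theorem~\ref{thm3.2} only yields an inequality for $h_{\iota\circ\tau}$, not for $h_\tau$. Since the known nontriviality at the Akbulut cork is $h_\tau(Z_{1,1})<0$, this breaks the induction when $m$ or $n$ is even. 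An interchanging $(-1,-1)$-cobordism does exist, but it steps by two in the parameter, which is exactly why only the odd case was previously known.

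Your proposed workaround is to bypass the cobordism argument and compute the $\iota$-complex of $Z_{m,n}$ directly. But this is speculative: you only ``expect'' a staircase structure, and no such two-parameter computation exists in the literature. Pinning down $\tau_*$ at the chain level for a family of this complexity, and then extracting enough numerical invariants to prove \emph{linear independence} in $\mathfrak{I}$ (which is strictly stronger than what the theorem asserts), is not a plan one can carry out without substantial new input. As stated, the proposal identifies its own main obstacle without resolving it.

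The paper's actual proof uses the instanton invariant $r_s(Y,\tau)$ of \cite{arXiv:2309.02309} instead. The crucial advantage is that $r_s$ is monotone under \emph{any} equivariant negative-definite cobordism with $H_1(W;\Z_2)=0$, with no spin$^c$-fixing/conjugating dichotomy (Theorem~\ref{3.6}). One builds a cobordism $W_{m,n}$ from $(Z_{m,n+1},\tau)$ to $(Z_{m,n},\tau)$ via Lemma~\ref{lem3.5}, and since $r_0(Z_{1,1},\tau)<\infty$ (Lemma~\ref{lem3.8}), monotonicity gives $r_0(Z_{m,n},\tau)<\infty$ for all $m,n$, hence each is a strong cork. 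For the linear combinations, the key extra step is to prove $W_{m,n}$ is \emph{simply connected} via an explicit Wirtinger computation (Claim~\ref{claim}); this upgrades the inequality to a strict one, $r_0(Z_{m,n+1},\tau)<r_0(Z_{m,n},\tau)$, and together with $r_0(-Z_{m,n},\tau)=\infty$ feeds directly into Theorem~\ref{thm3.7}. No chain-level Floer computation, and no linear independence in $\mathfrak{I}$, is needed.
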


\begin{figure}[h!]
\begin{center}
\includegraphics[width=3.2in]{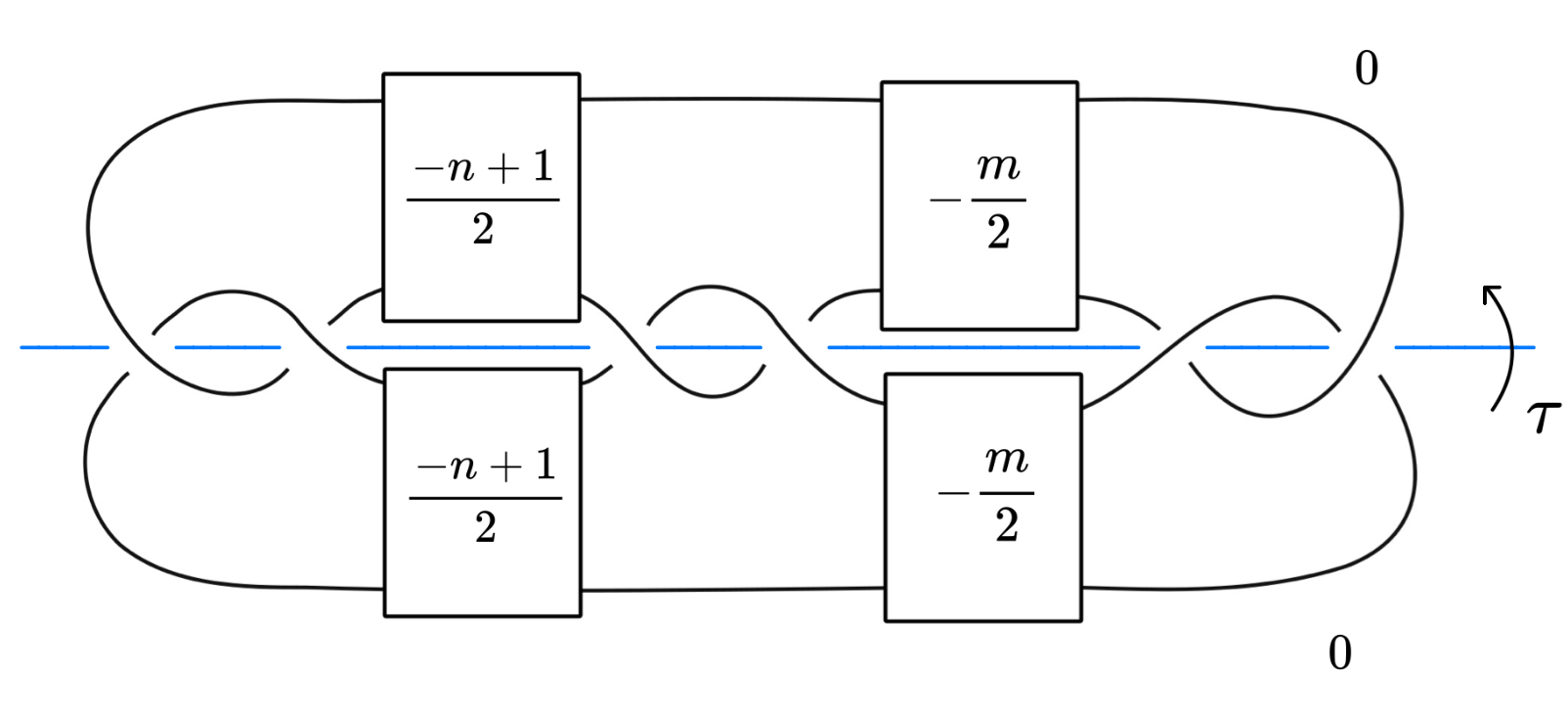}
\caption{\((Z_{m,n},\tau)\)}
\label{Z_m,n}
\end{center}
\end{figure}

For \(m=1\) and odd \(n\), it is shown in \cite[Theorem 1.12]{Dai2020CorksIA} and \cite[Theorem 1.5]{arXiv:2309.02309} that \((Z_{1,n},\tau)\) is a strong cork.
However, linear combinations of these corks are not discussed in these papers.

When \(m=1\), \((Z_{1,n},\tau)\) coincides with the boundary \((\partial C_n,\tau)\) of Auckly-Kim-Melvin-Ruberman's cork in \cite{Auckly2014StableII}, and when \(n=1\), \((Z_{m,1},\tau)\) coincides with the boundary \((\partial C(m),\tau)\) of Tange's cork in \cite{Tange2016FiniteOC}. In particular, for \(m=n=1\), \((Z_{1,1},\tau)\) corresponds to the boundary of the Akbulut cork.

The proof of Theorem \ref{thm_Z_m,n} relies on the instanton-theoretic invariant \(r_s(Y,\tau)\) introduced in \cite{arXiv:2309.02309}. 

\begin{remark}
    \((Z_{m,n}\# (-Z_{m,n}),\tau)\) is a boundary of a cork, but it is not a strong cork; that is, \(\tau\) extends over some homology ball that \(Z_{m,n}\# (-Z_{m,n})\) bounds. However, \(\tau\) does not extend over any contractible 4-manifold that \(Z_{m,n}\# (-Z_{m,n})\) bounds. See Remark \ref{rem}.
\end{remark}

On the other hand, using the Heegaard-Floer-theoretic invariants \(h_{\tau}(Y)\) and \(h_{\iota\circ\tau}(Y)\) introduced in \cite{Dai2020CorksIA}, we construct the family of strong corks in the following theorem:

\begin{theorem}\label{thm_Y_m,n}
    For \(m,n\in\N\), let \(Y_{m,n}\) be the 3-manifold obtained by surgery on the two-component link displayed in Figure \ref{Y_m,n}. Equip \(Y_{m,n}\) with the indicated involution \(\sigma\). Then the following statements hold for any \(m,n\in \N\):
    \begin{itemize}
        \item There exists an equivariant negative-definite cobordism from \((Y_{m,n+1},\sigma)\) to \((Y_{m,n},\sigma)\).
        \item There exists an equivariant negative-definite cobordism from \((Y_{m,n},\sigma)\) to \((\Sigma(2,2m+1,4m+3),\sigma)\), where \(\sigma\) is the involution displayed in Figure \ref{torus}.
        \item  \(h_{\iota\circ\sigma}(Y_{m,n})<0\) and thus \((Y_{m,n},\sigma)\) is a strong cork.
    \end{itemize}
    Moreover, it is also a strong cork by introducing any number of symmetric pairs of negative full twists.
\end{theorem}

\begin{figure}[h!]
\begin{center}
\includegraphics[width=3.3in]{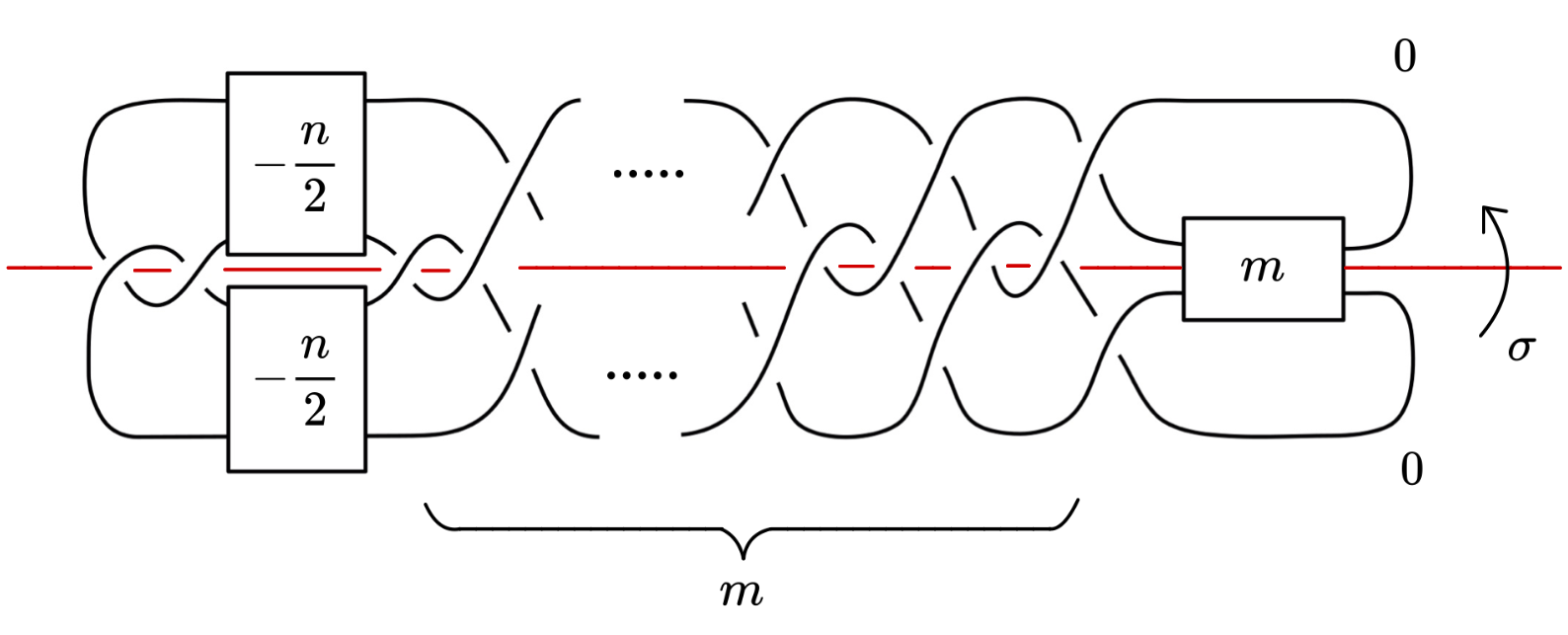}
\caption{\((Y_{m,n},\sigma)\).}
\label{Y_m,n}
\end{center}
\end{figure}

The proof of Theorem \ref{thm_Y_m,n} relies on the non-triviality of \(h_{\iota\circ\sigma}(\Sigma (2,2m+1,4m+3))\), where \(\Sigma(2,2m+1,4m+3)\) is a Brieskorn homology sphere equipped with the involution \(\sigma\) displayed in Figure \ref{torus}.

When \(n=1\), \(Y_{m,1}\) is diffeomorphic to the boundary \(\partial W_m\) of the Akbulut-Yasui cork in \cite[Figure 1]{zbMATH05660539}.
For the case \(m=1\), \((Y_{1,n},\sigma)\) is shown in Figure \ref{Y_1,n}. 
The following theorem can be proved analogously to Theorem \ref{thm_Z_m,n}:

\begin{theorem}\label{thm_Y_1,n}
    Any nontrivial linear combination of elements in \(\{(Y_{1,n},\sigma)\}_{n\in\N}\) is a strong cork. 
\end{theorem}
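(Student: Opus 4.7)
The strategy mirrors the proof of Theorem~\ref{thm_Z_m,n}, with the Heegaard-Floer input from Theorem~\ref{thm_Y_m,n} playing the role that instanton theory played there. Rather than tracking only the integer invariant $h_{\iota\circ\sigma}$, the plan is to work with the full local equivalence class of the $\iota\circ\sigma_{*}$-complex associated to $(Y_{1,n},\sigma)$, and to exploit its additivity under equivariant connected sum.

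First I would revisit the proof of Theorem~\ref{thm_Y_m,n} in the specialization $m=1$ and, at each value of $n$, upgrade the bare non-vanishing ``$h_{\iota\circ\sigma}(Y_{1,n})\neq 0$'' to non-triviality of the class $[Y_{1,n},\sigma]$ in the local equivalence group $\mathfrak{I}$ of involutive Floer complexes used in \cite{Dai2020CorksIA}. Next I would invoke the standard fact that equivariant connected sum corresponds to tensor product of the associated $\iota\tau$-type complexes, and hence descends to addition in $\mathfrak{I}$, with orientation reversal inducing the group inverse. In particular,
\[
[a_1 Y_{1,1}\#\cdots\# a_k Y_{1,k},\sigma]\;=\;\sum_{j=1}^{k} a_j\,[Y_{1,j},\sigma]\in\mathfrak{I}.
\]
The theorem then reduces to showing that the classes $\{[Y_{1,n},\sigma]\}_{n\in\N}$ are $\Z$-linearly independent in $\mathfrak{I}$, since a nonzero class in $\mathfrak{I}$ already obstructs extension of the involution over any homology ball.

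The hard part will be establishing this linear independence, which is the precise analogue of the step carried out for the real-valued invariant $r_s$ in the proof of Theorem~\ref{thm_Z_m,n}. I would approach it by extracting, for each $n$, a real-valued homomorphism from $\mathfrak{I}$ (for instance a $V_k$-type invariant or a graded-root statistic of the $\iota\circ\sigma_{*}$-complex) whose values on $[Y_{1,m},\sigma]$, for $m=1,2,\ldots$, form a strictly monotonic, or otherwise manifestly linearly independent, sequence. The twist parameter $n$ visible in Figure~\ref{Y_1,n} governs the complexity of the knot-Floer filtration of the underlying surgery, which should let such auxiliary invariants separate different values of $n$. Assembling these into a homomorphism $\mathfrak{I}\to\bigoplus_{n\in\N}\R$ under which $[Y_{1,n},\sigma]$ maps to a linearly independent sequence completes the reduction: any nontrivial integer combination on the left has nonzero image, and hence represents a strong cork.
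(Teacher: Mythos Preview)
Your proposal takes a fundamentally different route from the paper, and the key step you identify as ``the hard part'' is a genuine gap that is not filled by what you outline.

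The paper proves Theorem~\ref{thm_Y_1,n} with \emph{instanton} theory, not Heegaard Floer. Since $(Y_{1,1},\sigma)\cong (S^3_{+1}(\overline{9}_{46}),\sigma)$, Lemma~\ref{lem3.8} gives $r_0(Y_{1,1},\sigma)<\infty$ and $r_0(-Y_{1,1},\sigma)=\infty$. One then builds, exactly as in Figure~\ref{Ycob2}, an equivariant negative-definite cobordism from $(Y_{1,n+1},\sigma)$ to $(Y_{1,n},\sigma)$ and checks, by the same Wirtinger-type computation as in Claim~\ref{claim}, that it is simply connected. Theorem~\ref{3.6} then yields the \emph{strict} chain $r_0(Y_{1,1},\sigma)>r_0(Y_{1,2},\sigma)>\cdots$, together with $r_0(-Y_{1,n},\sigma)=\infty$ by reversing the cobordism; Theorem~\ref{thm3.7} finishes the argument. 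The crucial point is that $r_s$ comes equipped with a strict monotonicity statement under simply-connected negative-definite cobordisms, which is exactly what produces the separation needed for linear combinations.

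Your Heegaard Floer plan does not supply an analogue of this strictness. The inequality obtained in the proof of Theorem~\ref{thm_Y_m,n} is only $h_{\iota\circ\sigma}(Y_{1,n})\leq h_{\iota\circ\sigma}(Y_{1,1})\leq h_{\iota\circ\sigma}(B_1)$: every $Y_{1,n}$ is bounded above by the \emph{same} Brieskorn class, so nothing in that argument distinguishes different values of $n$ in $\mathfrak{I}$, let alone shows linear independence. Your proposed remedy---``extracting, for each $n$, a real-valued homomorphism from $\mathfrak{I}$ (for instance a $V_k$-type invariant or a graded-root statistic)''---is not a proof: the $V_k$-type numbers are not group homomorphisms on $\mathfrak{I}$, and no homomorphism is produced that actually separates the classes $[Y_{1,n},\sigma]$. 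Absent such a construction (which would itself be a substantial result), the linear-independence step simply does not go through. This is precisely why the paper switches to the instanton invariant $r_s$ here, where the needed strict inequality is available off the shelf.
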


\begin{figure}[h!]
\begin{center}
\includegraphics[width=2.3in]{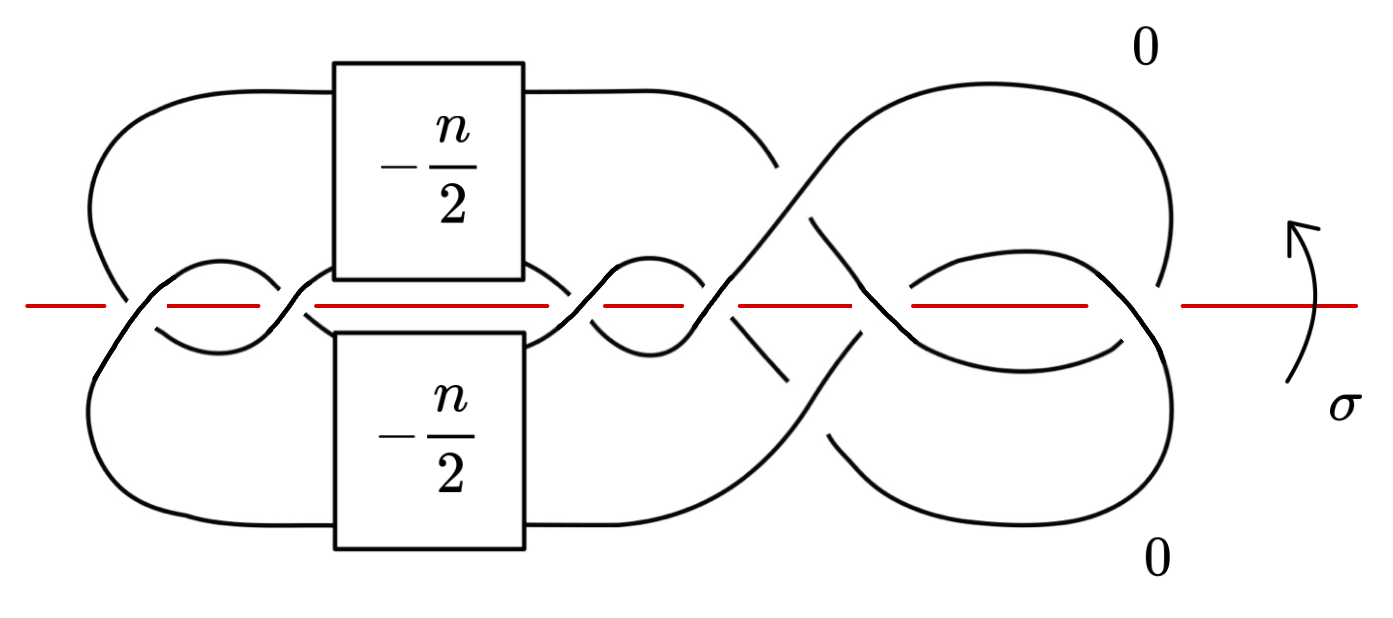}
\caption{\((Y_{1,n},\sigma)\).}
\label{Y_1,n}
\end{center}
\end{figure}

As shown in \cite[Theorem 4.2]{Auckly2014StableII}, \((Z_{1,n},\tau)\), appearing in Theorem \ref{thm_Z_m,n}, is equivariantly diffeomorphic to \((S^3_{+1}(K_n),\tau)\), where \((K_n,\tau)\) is the strongly invertible slice knot displayed on the left side of Figure \ref{K_n}. Using a technique similar to that in \cite[Theorem 4.2]{Auckly2014StableII}, one can show that \((Y_{1,n},\sigma)\) is equivariantly diffeomorphic to \((S^3_{+1}(K_n),\sigma)\), where \((K_n,\sigma)\) is  displayed on the right side of Figure \ref{K_n}. 
The following theorem is an extension of \cite[Theorem 1.5]{arXiv:2309.02309}:

\begin{theorem}\label{thm_K_n}
    For \(n\in\N\), let \(K_n\) be the strongly invertible slice knot displayed in Figure \ref{K_n}, equipped with the indicated involutions \(\tau\) and \(\sigma\). Then any nontrivial linear combination of elements in \(\{(S^3_{1/m}(K_n),\tau)\}_{m\in\N}\) is a strong cork. This statement holds with \(\tau\) replaced by \(\sigma\).
\end{theorem}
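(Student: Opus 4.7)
The plan is to adapt the strategies of Theorems \ref{thm_Z_m,n} and \ref{thm_Y_m,n} to the families $\{(S^3_{1/m}(K_n),\tau)\}_m$ and $\{(S^3_{1/m}(K_n),\sigma)\}_m$, using the instanton-theoretic invariant $r_s$ from \cite{arXiv:2309.02309} for the $\tau$ case and the Heegaard Floer invariant $h_{\iota\circ\sigma}$ from \cite{Dai2020CorksIA} for the $\sigma$ case. Since $K_n$ is slice, each $S^3_{1/m}(K_n)$ bounds a homology $4$-ball, so the only remaining tasks are to verify that the involution fails to extend over any such filling and that this failure persists under equivariant connected sum.

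For the $\tau$ case, I would first convert the $1/m$-surgery into an equivariant integer-framed link surgery. Using the continued fraction expansion $1/m = [0,m]$, one presents $S^3_{1/m}(K_n)$ as $0$-surgery on $K_n$ together with an $m$-framed meridional unknot; the strong invertibility of $K_n$ allows this meridian to be chosen $\tau$-invariant, so $\tau$ extends to a $2$-component equivariant surgery diagram. This places $(S^3_{1/m}(K_n),\tau)$ in a form directly comparable to the $Z_{m,n}$ family of Theorem \ref{thm_Z_m,n}, and $r_s$ can be analyzed by parallel techniques.

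The main obstacle is then showing that the values $\{r_s(S^3_{1/m}(K_n),\tau)\}_m$ are $\Z$-linearly independent. This amounts to computing $r_s$ at rational surgeries on a strongly invertible slice knot, and likely requires either a large-surgery / equivariant surgery formula analogous to those in involutive Heegaard Floer theory, or a direct analysis exploiting the explicit handle structure of $K_n$ and a monotonicity argument in $m$ paralleling that obtained for $Z_{m,n}$. Once linear independence is established, the additivity of $r_s$ under equivariant connected sum (from \cite{arXiv:2309.02309}) immediately implies that $a_1 S^3_{1/1}(K_n)\#\cdots\#a_k S^3_{1/k}(K_n)$ has nonzero $r_s$ for every nontrivial sequence $(a_1,\ldots,a_k)$, whence it is a strong cork.

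The $\sigma$ case proceeds in parallel, substituting a $\sigma$-invariant meridian in the above construction and replacing $r_s$ by $h_{\iota\circ\sigma}$. The analogue of the main obstacle is computing $h_{\iota\circ\sigma}(S^3_{1/m}(K_n))$ and establishing its linear independence in $m$; this can be approached via the involutive surgery formula together with the base input from Theorem \ref{thm_Y_m,n}, namely the non-triviality of $h_{\iota\circ\sigma}(\Sigma(2,2m+1,4m+3))$. In both cases the hard work is concentrated in the surgery computation; once that is in hand, additivity and linear independence give the conclusion without further effort.
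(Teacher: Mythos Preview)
Your outline has the right ingredients in places (the $r_s$-invariant, a cobordism between consecutive $1/m$-surgeries) but the mechanism you propose for handling linear combinations is incorrect, and your treatment of the $\sigma$ case diverges from what actually works.

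First, the $r_s$-invariant is \emph{not} additive under equivariant connected sum, so the step ``additivity of $r_s$ \ldots\ immediately implies that $a_1 S^3_{1/1}(K_n)\#\cdots\#a_k S^3_{1/k}(K_n)$ has nonzero $r_s$'' is unfounded. Likewise, ``$\Z$-linear independence'' of the real numbers $r_s(S^3_{1/m}(K_n),\tau)$ is not the relevant condition. The paper instead invokes Theorem~\ref{thm3.7}, whose hypotheses are: (i) $r_0(Y_1,\tau_1)>r_0(Y_2,\tau_2)>\cdots$ strictly, (ii) $r_0(Y_1,\tau_1)<\infty$, and (iii) $r_0(-Y_i,\tau_i)=\infty$ for all $i$. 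These are verified as follows: the strict decrease comes from a \emph{simply-connected} equivariant negative-definite cobordism $(S^3_{1/(m+1)}(K_n),\tau)\to(S^3_{1/m}(K_n),\tau)$ together with Theorem~\ref{3.6}; finiteness at $m=1$ comes from the identification $(S^3_{+1}(K_n),\tau)\cong(Z_{1,n},\tau)$ and Theorem~\ref{thm_Z_m,n}; and condition (iii) is supplied by \cite[Theorem~6.3]{arXiv:2309.02309}, which gives $r_0(-S^3_{1/m}(K),\tau)=\infty$ for strongly invertible slice knots. No surgery formula or direct computation of $r_s$ at rational surgeries is needed.

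Second, for the $\sigma$ case the paper does \emph{not} switch to $h_{\iota\circ\sigma}$; it runs the identical $r_s$ argument, using Lemma~\ref{lem3.8} (which covers both $\tau$ and $\sigma$) and the identification $(S^3_{+1}(K_n),\sigma)\cong(Y_{1,n},\sigma)$. Your proposed Heegaard Floer route would at best yield $h_{\iota\circ\sigma}(S^3_{1/m}(K_n))<0$ for each $m$ individually, but the partial order on $\mathfrak{I}$ and the available monotonicity (Theorem~\ref{thm3.2}) do not give you control over arbitrary signed connected sums; there is no analogue of Theorem~\ref{thm3.7} for $h_{\iota\circ\sigma}$ in the paper's toolkit, so the linear-combination statement would remain out of reach.
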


\begin{figure}[h!]
\begin{center}
\includegraphics[width=5.3in]{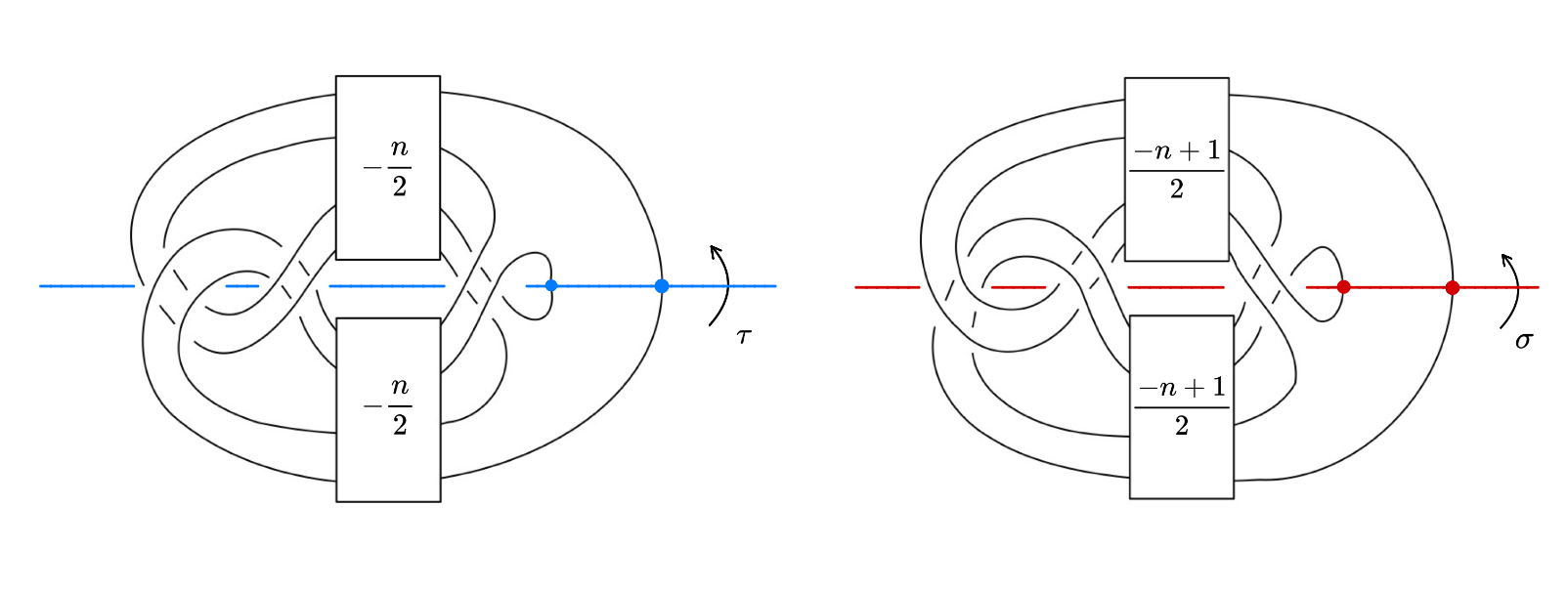}
\caption{Left: \((K_n,\tau)\). Right: \((K_n,\sigma)\).}
\label{K_n}
\end{center}
\end{figure}

When \(n=1\), the first member of the family \(K_n\) is \(K_1=\overline{9}_{46}\), and \((S^3_{+1}(K_1),\tau)\) is the boundary of the Akbulut cork. In the case where \(n\) is odd and the involution is \(\tau\), Theorem \ref{thm_K_n} was established in \cite{arXiv:2309.02309}.

The following proposition implies that the involutions of \((Z_{1,n},\tau)\) and \((Y_{1,n},\sigma)\) are induced by distinct symmetries of \(K_n\), respectively, although \(Z_{1,n}\) and \(Y_{1,n}\) are diffeomorphic.

\begin{proposition}\label{prop_sakuma}
    For any \(n\in\N\), the knot \(K_n\), displayed in Figure \ref{K_n}, admits two strong involutions \(\tau\) and \(\sigma\) such that the pairs \((K_n,\tau)\) and \((K_n,\sigma)\) are not Sakuma equivalent.
\end{proposition}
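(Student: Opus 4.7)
The plan is to leverage the identifications $(S^3_{+1}(K_n),\tau)\cong (Z_{1,n},\tau)$ and $(S^3_{+1}(K_n),\sigma)\cong (Y_{1,n},\sigma)$ stated just before the proposition, together with the Floer-theoretic invariants developed earlier in the paper. A Sakuma equivalence between $(K_n,\tau)$ and $(K_n,\sigma)$ would be realized by an orientation-preserving self-diffeomorphism of the pair $(S^3,K_n)$ conjugating $\tau$ to $\sigma$, and this would descend to an orientation-preserving equivariant diffeomorphism between $(Z_{1,n},\tau)$ and $(Y_{1,n},\sigma)$ regarded as two involutions on the common underlying 3-manifold $M_n = S^3_{+1}(K_n)$.

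It therefore suffices to show that the two involutions on $M_n$ are not conjugate by any orientation-preserving self-diffeomorphism. I would attempt this via a Floer-theoretic obstruction: compute the Dai-Hedden-Mallick invariant $h_{\iota\circ\tau}(Z_{1,n})$ from a natural involutive Heegaard Floer presentation of the $+1$-surgery on $K_n$ and show that it disagrees with $h_{\iota\circ\sigma}(Y_{1,n})$, whose non-triviality follows from the proof of Theorem \ref{thm_Y_m,n}. Since these invariants depend only on the equivariant diffeomorphism type of the pair (3-manifold, involution), any such discrepancy rules out conjugacy of $\tau$ and $\sigma$ and hence rules out Sakuma equivalence.

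The main obstacle is the concrete computation of $h_{\iota\circ\tau}(Z_{1,n})$ together with a matching bound that provably distinguishes it from $h_{\iota\circ\sigma}(Y_{1,n})$; one needs to pin down the $\iota$-action for the $\tau$-involution without merely recovering the already-known strong-cork status, which only uses the instanton invariant $r_s$. As a fallback, I would work directly upstairs in $S^3$: the ambient isotopy class of the spatial graph $K_n\cup\mathrm{Fix}(\tau)$ (and the analogous one for $\sigma$) is a Sakuma invariant, and from the diagrams in Figure \ref{K_n} one can read off the respective quotients in $S^3/\tau$ and $S^3/\sigma$. Computing a classical invariant of each quotient, such as a one-variable Alexander polynomial of the 2-component link obtained by closing up the quotient arc along the image of the axis, should distinguish the two involutions combinatorially for every $n$, with the base case $n=1$ recovering the classical observation that $\overline{9}_{46}$ admits two non-equivalent strong inversions.
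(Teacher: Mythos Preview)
Your primary approach via $h_{\iota\circ\tau}$ versus $h_{\iota\circ\sigma}$ has the real gap you yourself flag: nothing in this paper or in \cite{Dai2020CorksIA} pins down $h_{\iota\circ\tau}(Z_{1,n})$ precisely enough to separate it from $h_{\iota\circ\sigma}(Y_{1,n})$ in $\mathfrak{I}$. The available cobordism inequalities (Theorem~\ref{thm3.2}) are one-sided, and knowing that two local-equivalence classes are both $<0$ tells you nothing about whether they coincide. So as written the Floer route is a hope, not an argument, and there is no evident way to close it for general $n$.

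Your fallback, by contrast, is exactly what the paper does. The paper distinguishes $(K_n,\tau)$ from $(K_n,\sigma)$ by computing Sakuma's $\eta$-polynomial of each --- precisely a classical polynomial invariant built from the quotient two-component link $L(K,\tau)=p(\mathrm{Fix}\,\tau)\cup p(l)$ in $S^3/\tau\cong S^3$ --- and exhibits explicit closed formulas (separately for $n$ odd and $n$ even) showing $\eta_{(K_n,\tau)}\neq\eta_{(K_n,\sigma)}$. This is your ``one-variable Alexander-type invariant of the quotient link'' made specific. What your proposal lacks is the actual calculation: the content of the paper's proof is working through Sakuma's algorithm on the pseudo-fundamental regions for all $n$, and that computation is where the substance lies.
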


Using SnapPy \cite{SnapPy}, we verified that, for \(1\leq n\leq 5\), \(K_n\) is a hyperbolic knot and the isometry group of \(S^3\setminus K_n\) is isomorphic to \(\Z_2\oplus \Z_2\). 
By Mostow rigidity theorem \cite{LIHS1968QuasiconformalMI}, if \(K\) is a hyperbolic knot, then the symmetry group of \(K\) is isomorphic to the isometry group of \(S^3\setminus K\).
The knot \(K_n\) admits distinct strong involutions \(\tau\) and \(\sigma\). Since the product of two distinct strong involutions is orientation-preserving, the third non-trivial element of the group must be a periodic involution. 
Consequently, \(K_n\) admits exactly two strong involutions for \(1\leq n\leq 5\).

\begin{question}
    For any \(n\geq 1\), is the knot \(K_n\) hyperbolic? When \(K_n\) is hyperbolic, is the isometry group of \(S^3\setminus K_n\) isomorphic to \(\Z_2\oplus \Z_2\)?
\end{question}

We prove Theorems \ref{thm_Z_m,n}, \ref{thm_Y_m,n}, \ref{thm_Y_1,n}, and \ref{thm_K_n} in Section \ref{sectioncork}, and we prove Proposition \ref{prop_sakuma} in Section \ref{appendix} (appendix).

We close this section with the following question.

\begin{question}
    Are the boundaries of the positron corks \((\overline{W}_n,\tau)\) for \(n\geq 2\) and Auckly-Kim-Melvin-Ruberman's corks \((\overline{C}_n,\tau)\) for even \(n\geq 2\) (which are not covered in this paper) strong corks?
\end{question}

\subsection*{Acknowledgements}
The author would like to express his sincere gratitude to Masaki Taniguchi for his helpful advice, insightful discussions, and encouragement. He would also like to thank Kouichi Yasui for his helpful comments on the draft and constructive discussions, and Irving Dai, Abhishek Mallick, and Motoo Tange for their valuable comments on the draft.

\section{Definitions and background}
\subsection{Equivariant handle attachment} 
In this section, we briefly review the basics of equivariant handle attachment.
First, we begin with the definition of an involutive link and an equivariant knot.

\begin{definition}
    Let \(L\) be a link in a 3-manifold \(Y\), and let \(\tau\) be an orientation-preserving involution on \(Y\). 
    We say that \((L,\tau)\) is an \(involutive\ link\)
    if \(\tau\) fixes \(L\) setwise. 
    In particular, in the case  that \(L\) is a knot, 
    we say that \((L,\tau)\) is an \(equivariant\ knot\).
    Let \((K,\tau)\) be an equivariant knot.
    If \(\tau\) fixes two points on \(K\), we say that \((K,\tau)\) is \(strongly\ invertible\). If \(\tau\) has no fixed points on \(K\), we say that \((K,\tau)\) is \(periodic\).
\end{definition}

\begin{definition}
    Two strongly invertible knots \((K,\tau)\) and \((K',\tau')\) are said to be \(Sakuma\ equivalent\) if there exists an orientation-preserving diffeomorphism \(f:S^3\rightarrow S^3\) such that \(f(K)=K'\) and \(f\circ\tau=\tau'\circ f\).
\end{definition}

By the work of Waldhausen \cite{Waldhausen1969berID}, any orientation-preserving involution on \(S^3\) is conjugate to a rotation about an unknot.
In the case that \(Y=S^3\), we will identify \(S^3\) with the one-point compactification of \(\R ^3\) and often depict \(\tau\) as a \(180^{\circ}\) rotation around an axis in \(\R ^3\).

If two strongly invertible knots \((K,\tau)\) and \((K',\tau')\) are Sakuma equivalent, then they are related by a sequence of the involutive Reidemeister moves, together with an equivariant isotopy passing through the point at \(\infty\) and a \(180^{\circ}\) rotation about the origin; see \cite[Theorem 2.4]{Lobb2019ARO} and \cite[Theorem 2.10]{arXiv:2507.13642}.

In \cite[Section 5]{Dai2020CorksIA}, it is shown that if \(K\) is an equivariant knot in \(Y\) with symmetry \(\tau\), then \(\tau\) extends to an involution on any Dehn surgery along \(K\). We also denote it by \(\tau\). Moreover, \(\tau\) extends over the 2-handle cobordism given by attaching a 2-handle along \(K\).
Similarly, if \(L\) is a link equipped with \(\tau\) that exchanges some pairs of components with the same framings, then \(\tau\) extends to involutions on surgered manifolds and 2-handle cobordisms.

We define (\(\Z_2\)-)equivariant handles and equivariant attaching maps. For a more general setting, see \cite{Was69} and \cite{ms2025}.

\begin{definition}
    Let \(h=D^k\times D^{n-k}\) be a \(k\)-handle equipped with an orientation-preserving involution \(\tau\).
    We say that \((h,\tau)\) is an \(equivariant\ handle\) if 
    \[
    \tau(x,y)=(\alpha(x),\beta(y)),
    \]
    where \(\alpha:D^k\rightarrow D^k,\ \beta: D^{n-k}\rightarrow D^{n-k}\) are involutions smoothly conjugate to \(A|_{D^k}\) and \(B|_{D^{n-k}}\) for some \(A\in O(k),\ B\in O(n-k)\), respectively.
    
    Let \(h_1\sqcup h_2\) be a disjoint union of \(k\)-handles \(D_i^k\times D_i^{n-k}\), and \(\tau\) be an orientation-preserving involution on \(h_1\sqcup h_2\) that exchanges the two components.
    We say that \((h_1\sqcup h_2,\tau)\) is an \(equivariant\ pair\ of\ handles\) if 
    \[
    \tau(x,y)=(\alpha(x),\beta(y)),
    \]
    where \(\alpha:D_1^k\sqcup D_2^k\rightarrow D_1^k\sqcup D_2^k,\ \beta: D_1^{n-k}\sqcup D_2^{n-k}\rightarrow D_1^{n-k}\sqcup D_2^{n-k}\) are involutions satisfying
    \[
    \alpha(D^k_1)=D_2^k,\ \beta(D_1^{n-k})=D_2^{n-k},
    \]
    and (after identifying the disk factors \(D_i^k, D_i^{n-k}\) with the standard disks \(D^k, D^{n-k}\)), \(\alpha|_{D_i^k}\) and \(\beta|_{D_i^{n-k}}\) are smoothly conjugate to \(A|_{D^k}\) and \(B|_{D^{n-k}}\) for some \(A\in O(k),\ B\in O(n-k)\), respectively.
\end{definition}

\begin{definition}
    Let \((X,\tau)\) be an \(n\)-manifold with boundary, equipped with an involution.
    \begin{itemize}
        \item Let \((h,\tau_h)\) be an equivariant handle and \(\varphi:\partial D^k\times D^{n-k}\rightarrow \partial X\) be an attaching map. We say that \(\varphi\) is \(equivariant\) if 
        \[
        \tau|_{\partial X}\circ\varphi=\varphi\circ\tau_h|_{\partial D^k\times D^{n-k}}.
        \]
        \item Let \((h_1\sqcup h_2,\tau_h)\) be an equivariant pair of handles and \(\varphi_i:\partial D_i^k\times D_i^{n-k}\rightarrow \partial X\) be an attaching map for each \(h_i\) (\(i=1,2\)). We say that \(\varphi_1\) and \(\varphi_2\) are \(equivariant\) if 
        \[
        \tau|_{\partial X}\circ\varphi_1=\varphi_2\circ\tau_h|_{\partial D_1^k\times D_1^{n-k}}.
        \]
    \end{itemize}
    
\end{definition}

\begin{definition}
    Let \((X,\tau)\) and \((X',\tau')\) be manifolds equipped with involutions. 
    We say that \((X,\tau)\) and \((X',\tau')\) are \(equivariantly\ diffeomorphic\) if there exists an orientation-preserving diffeomorphism \(f:X\rightarrow X'\) such that \(f\circ\tau=\tau'\circ f\).
\end{definition}

Now, we focus on the 4-dimensional case and consider the equivariant attachment of equivariant 2-handles and equivariant pairs of 2-handles to \((X,\tau)\), where \(X\) is a 4-manifold with boundary and \(\tau\) is an orientation-preserving involution.
It follows from the definition that 
the union of attaching circles of equivariant (pairs of) 2-handles forms an involutive link in \(\partial X\).
In particular, attaching circles corresponding to an equivariant pair of 2-handles are exchanged by the involution \(\tau|_{\partial X}\) and they are assigned the same framing coefficients.

Given an \(n\)-framed equivariant knot \((K,\tau)\) in \(\partial X\), there exists an equivariant 2-handle \((h,\tau_h)\) and an equivariant attaching map \(\varphi\) whose attaching sphere is \(K\). 
(The involution \(\tau_h\) is explicitly described in \cite[Lemma 5.3]{Dai2020CorksIA}.)
Then 
\(\tau\) and \(\tau_h\) glue together to define an involution \(\tilde{\tau}\) on the resulting 4-manifold  \(X\cup_{\varphi} h\). 
By the equivariant isotopy extension theorem (see, for example, \cite[Theorem 3.6.1]{Fie07}),  the equivariant diffeomorphism type of \((X\cup_{\varphi} h,\tilde{\tau})\) depends only on the equivariant isotopy type of the attaching map \(\varphi\).
Indeed, if the two equivariant attaching maps \(\varphi_0\) and \(\varphi_1\) are equivariantly isotopic,
then the equivariant isotopy extension theorem gives an equivariant ambient isotopy 
\[
H_t: \partial X\rightarrow\partial X\ (t\in [0,1])
\]
such that \(H_1\circ \varphi_0=\varphi_1\). Choose an equivariant collar neighborhood of \(\partial X\). 
Extend \(H_t\) over this collar, set the extension equal to the identity outside the collar, and glue with the identity on the 2-handle \(h\).
This gives
the equivariant diffeomorphism between the resulting 4-manifolds \((X\cup_{\varphi_0} h,\tilde{\tau})\) and \((X\cup_{\varphi_1} h,\tilde{\tau})\).
The same argument applies to an equivariant pair of 2-handles.

\subsection{Strong corks}
We recall the definitions of a cork and a strong cork.
\begin{definition}
    Let \(C\) be a compact, contractible 4-manifold and \(\tau : \partial C\rightarrow\partial C\) be an orientation-preserving diffeomorphism. We say that \((C,\tau)\) is a \(cork\) if \(\tau\) does not extend over \(C\) as a diffeomorphism.
\end{definition}

In this paper, we do not require \(C\) to be Stein, although this condition is sometimes required, such as in \cite{zbMATH05660539}.

\begin{definition}
    Let \(Y\) be an oriented integer homology 3-sphere and \(\tau: Y\rightarrow Y\) be an orientation-preserving diffeomorphism. Suppose that \(Y\) bounds at least one compact, contractible 4-manifold. We say that \((Y,\tau)\) is a \(\mathit{strong\ cork}\) if \(\tau\) does not extend as a diffeomorphism over any smooth homology ball that \(Y\) bounds. 
\end{definition}

In this paper, we consider only the case where \(\tau\) is an involution.

\begin{definition}
    Let \((Y,\tau)\) and \((Y',\tau')\) be oriented integer homology 3-spheres equipped with orientation-preserving involutions. We say that \((W,\tilde{\tau})\) is an \(equivariant\ cobordism\) from \((Y,\tau)\) to \((Y',\tau')\) if \(W\) is a cobordism from \(Y\) to \(Y'\) and \(\tilde{\tau}\) is a diffeomorphism of \(W\) such that \(\tilde{\tau}|_{Y}=\tau\) and \(\tilde{\tau}|_{Y'}=\tau'\).
\end{definition}

If \((Y_1,\tau_1)\) and \((Y_2,\tau_2)\) are oriented homology 3-spheres equipped with orientation-preserving involutions, and the fixed-point set of \(\tau_i\) is diffeomorphic to \(S^1\), then we can define their equivariant connected sum \((Y_1\# Y_2, \tau_1\#\tau_2)\) as follows. 
Let \(p_1\in Y_1\) and \(p_2\in Y_2\) be fixed points of \(\tau_1\) and \(\tau_2\), respectively. 
Let \(f_i: D^3\rightarrow Y_i\) be an embedding satisfying the following conditions:
\begin{enumerate}
    \item \(f_i(0)=p_i\) and \(\tau_i(f_i(D^3))=f_i(D^3)\).
    \item \(f^{-1}_i\circ\tau_i\circ f_i(x,y,z)=(x,-y,-z)\).
    \item The orientation of \(\textrm{Fix}(\tau_i)\) coincides with the orientation of \(f_i(D^3)\cap \textrm{Fix}(\tau_i)\) induced from the natural orientation of \(\{(x,0,0)\in D^3)\}\).
\end{enumerate}
We set \(D^3_{1/2}=\{(x,y,z)\in D^3\mid x^2+y^2+z^2\leq 1/2\}\) and define \(\phi:\partial D^3_{1/2}\rightarrow \partial D^3_{1/2}\) by \((x,y,z)\mapsto (x,-y,z)\).
Then the connected sum is defined as
\(
Y_1\# Y_2=(Y_1\setminus f_1(\textrm{Int}(D^3_{1/2})))\cup_{\phi} (Y_2\setminus f_2(\textrm{Int}(D^3_{1/2})))
\)
and we can define the involution \(\tau_1\#\tau_2\) on \(Y_1\# Y_2\) such that \((\tau_1\#\tau_2)|_{Y_i\setminus f_i(D^3_{1/2})}=\tau_i\). This operation is independent of the choice of \(p_1\) and \(p_2\).

\section{Detecting strong corks}\label{sectioncork}
\subsection{Heegaard-Floer-theoretic methods}
Many new families of strong corks have been discovered using Heegaard-Floer-theoretic methods developed in \cite{Dai2020CorksIA}. The main tool is the following theorem:
\begin{theorem}\cite[Theorem 1.1]{Dai2020CorksIA}\label{thm3.1}
    Let \(Y\) be an oriented integer homology 3-sphere and \(\tau\) be an orientation-preserving involution on \(Y\). Then there are two invariants 
    \[h_{\tau}(Y)=[(CF^-(Y)[-2],\tau)],\ h_{\iota\circ\tau}(Y)=[(CF^-(Y)[-2],\iota\circ\tau)]
    \]
    associated with the pair \((Y,\tau)\). If either \(h_{\tau}(Y)\neq 0\) or \(h_{\iota\circ\tau}(Y)\neq 0\), then \(\tau\) does not extend to a diffeomorphism of any homology ball bounded by \(Y\). Here, we denote \(0=h_{\textrm{id}}(S^3)\).
\end{theorem}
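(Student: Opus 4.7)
The plan is to adapt the framework of Hendricks--Manolescu--Zemke for involutive Heegaard Floer homology so that the conjugation symmetry $\iota$ is paired with the diffeomorphism action $\tau_{*}$. First I would define a category of pairs $(C,\tau)$, where $C$ is a finitely generated free $\mathbb{F}_{2}[U]$-chain complex with $U^{-1}H_{*}(C)\cong \mathbb{F}_{2}[U,U^{-1}]$ and $\tau\colon C\to C$ is a chain map that is a homotopy involution. Local equivalence is the relation generated by chain maps that commute with $\tau$ up to homotopy and induce isomorphisms after $U$-inversion. Then $h_{\tau}(Y)$ is the local equivalence class of $(CF^{-}(Y)[-2],\tau_{*})$, where $\tau_{*}$ is the map induced at the chain level by the diffeomorphism $\tau$; the definition of $h_{\iota\circ\tau}(Y)$ is parallel, using $\iota\circ\tau_{*}$ in place of $\tau_{*}$, after verifying that this composite is again a homotopy involution (which follows from the homotopy-commutation of $\iota$ with diffeomorphism-induced actions).

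Next, assume $\tau$ extends to a diffeomorphism $\widetilde{\tau}\colon W\to W$ of a compact smooth homology $4$-ball $W$ with $\partial W=Y$; the aim is to show $h_{\tau}(Y)=0$. Choose a small ball $B\subset W$ preserved by $\widetilde{\tau}$ (for instance centered at a fixed point of $\widetilde{\tau}$ when one exists, otherwise placed in an equivariant tubular neighborhood of an orbit), so that $W':=W\setminus\mathrm{Int}(B)$ is a homology cobordism from $Y$ to $S^{3}=\partial B$. The Ozsv\'ath--Szab\'o cobordism map $F_{W'}\colon CF^{-}(Y)\to CF^{-}(S^{3})$ is defined, and because $\widetilde{\tau}$ extends over $W'$, functoriality of the $4$-manifold invariants should give $F_{W'}\circ\tau_{*}\simeq (\widetilde{\tau}|_{S^{3}})_{*}\circ F_{W'}$ at the chain level. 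Moreover, by the Hendricks--Manolescu--Zemke naturality theorem, $F_{W'}$ also commutes with $\iota$ up to chain homotopy. Since $W'$ is a homology cobordism, $F_{W'}$ is a local map.

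To conclude I would show that the target pair $(CF^{-}(S^{3}),(\widetilde{\tau}|_{S^{3}})_{*})$ is locally equivalent to the trivial pair $(CF^{-}(S^{3}),\mathrm{id})$. By Waldhausen's theorem any orientation-preserving involution of $S^{3}$ is conjugate to a standard rotation, and since $CF^{-}(S^{3})\simeq \mathbb{F}_{2}[U]$ has a one-dimensional generator in each grading, any chain-level involution must act as the identity on $U^{-1}H_{*}$ and is therefore chain-homotopic to the identity. Composing the two local equivalences yields $h_{\tau}(Y)=h_{\mathrm{id}}(S^{3})=0$, contradicting $h_{\tau}(Y)\neq 0$; the argument for $h_{\iota\circ\tau}$ is identical, using $\iota\circ\tau_{*}$ and its image under $F_{W'}$ in place of $\tau_{*}$ throughout.

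The principal obstacle is the chain-level $\widetilde{\tau}$-equivariance of $F_{W'}$. A priori $F_{W'}$ depends on auxiliary data (Heegaard splittings of $W'$, almost complex structures, $\mathrm{Spin}^{c}$ representatives), so establishing $F_{W'}\circ\tau_{*}\simeq (\widetilde{\tau}|_{S^{3}})_{*}\circ F_{W'}$ requires arranging that such choices can be made $\widetilde{\tau}$-equivariantly, or more realistically upgrading the Juh\'asz--Thurston--Zemke naturality theorem to the equivariant setting in parallel with the $\iota$-naturality of Hendricks--Manolescu--Zemke. Once this input is in place, the remainder of the argument is bookkeeping inside the local equivalence framework.
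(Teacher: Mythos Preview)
The paper does not prove this statement: Theorem~\ref{thm3.1} is quoted verbatim from \cite{Dai2020CorksIA} as background, and the surrounding text only reviews the definitions of $h_\tau$ and $h_{\iota\circ\tau}$ before moving on to use them. There is therefore no proof in the paper to compare your proposal against.

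Your outline is nonetheless close to the argument in \cite{Dai2020CorksIA}, with one point that needs repair. The extension $\widetilde{\tau}$ is only assumed to be a diffeomorphism of $W$, not an involution or even of finite order; in particular it need not have a fixed point, and your phrase ``equivariant tubular neighborhood of an orbit'' has no meaning in that generality. The standard fix is to first isotope $\widetilde{\tau}$ rel $\partial W$ so that it fixes a small ball $B\subset\mathrm{Int}(W)$ pointwise (possible because $\widetilde{\tau}$ is orientation-preserving and $\mathrm{Int}(W)$ is connected); then $W'=W\setminus\mathrm{Int}(B)$ is genuinely $\widetilde{\tau}$-invariant with $\widetilde{\tau}|_{S^3}=\mathrm{id}$, and the Waldhausen step becomes unnecessary. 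You should also make explicit that local equivalence requires maps in both directions, the second coming from the reversed cobordism. With these adjustments your sketch matches the Dai--Hedden--Mallick argument, the substantive input being exactly the equivariant naturality of cobordism maps that you flag at the end.
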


The invariants \(h_\tau(Y)\) and \(h_{\iota\circ\tau}(Y)\) take values in \(\mathfrak{I}\), the group of \(\iota\)-complexes modulo local equivalence, which was defined in \cite{Hendricks2017ACS}. We briefly review the definition of the invariants \(h_\tau(Y)\) and \(h_{\iota\circ\tau}(Y)\). In \cite[Section 4]{Dai2020CorksIA}, it is shown that \(\tau\) induces a homotopy involution \(\tau:CF^-(Y)\rightarrow CF^-(Y)\) and \((CF^-(Y),\tau)\) constitutes an \(\iota\)-complex. Taking the local equivalence class of this \(\iota\)-complex gives 
\[
h_{\tau}(Y)=[(CF^-(Y)[-2],\tau)]\in\mathfrak{I}.
\] 
Considering the map \(\iota\circ\tau\) instead of \(\tau\) gives another \(\iota\)-complex and its local equivalence class 
\[
h_{\iota\circ\tau}(Y)=[(CF^-(Y)[-2],\iota\circ\tau)]\in\mathfrak{I},
\]
where \(\iota\) is the homotopy involution on \(CF^{-}(Y)\) defined in \cite{zbMATH06736599}.
The local equivalence group \(\mathfrak{I}\) admits a partial order; see \cite[Definition 3.6]{Dai2020CorksIA}. The invariants \(h_\tau(Y)\) and \(h_{\iota\circ\tau}(Y)\) are monotonic under an appropriate equivariant negative-definite cobordism, as in the following theorem:
\begin{theorem}\cite[Theorem 1.5]{Dai2020CorksIA}\label{thm3.2}
    Let \((Y,\tau)\) and \((Y',\tau')\) be oriented integer homology 3-spheres equipped with orientation-preserving involutions \(\tau\) and \(\tau'\).
    \begin{enumerate}
        \item Let \((K,\tau)\) be an equivariant knot in \(Y\). Suppose that \(Y'\) is obtained from \(Y\) by performing \((-1)\)-surgery on \(K\), and \(\tau'\) is the extension of \(\tau\). If \((K,\tau)\) is periodic, then
        \[
        h_\tau(Y)\leq h_{\tau'}(Y').
        \]
        If \((K,\tau)\) is strongly invertible, then
        \[
        h_{\iota\circ\tau}(Y)\leq h_{\iota\circ\tau'}(Y').
        \]
        \item Let \(L=L_1\cup L_2\) be a two-component link in \(Y\) with algebraic linking number zero whose components \(L_1,\ L_2\) are interchanged by \(\tau\). 
        Suppose that \(Y'\) is obtained from \(Y\) by performing \((-1)\)-surgery on each component of \(L\), and \(\tau'\) is the extension of \(\tau\). Then
        \[
        h_{\tau}(Y)\leq h_{\tau'}(Y')\ 
        and\ 
        h_{\iota\circ\tau}(Y)\leq h_{\iota\circ\tau'}(Y').
        \]
    \end{enumerate}
\end{theorem}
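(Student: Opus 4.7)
The plan is to construct, in each case, an equivariant negative-definite $2$-handle cobordism $(W,\tilde\tau)$ from $(Y,\tau)$ to $(Y',\tau')$ and to show that the associated cobordism map on $CF^{-}$ descends to a local map of $\iota$-complexes with respect to the designated involution. For part~(1), let $W$ be the trace of $(-1)$-surgery on $K$; since $\tau$ preserves the attaching circle setwise, the discussion in Section~2.1 produces an extension $\tilde\tau$ of $\tau$ over $W$ with $\tilde\tau|_{Y'}=\tau'$, and the intersection form is $(-1)$, so $W$ is negative definite. For part~(2), attach two $(-1)$-framed $2$-handles along $L_1\cup L_2$; the vanishing-linking hypothesis forces the intersection form to be $(-1)\oplus(-1)$, and $\tilde\tau$ swaps the two handles.

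The Heegaard Floer input is that, for a suitable $\mathrm{spin}^c$-structure $\mathfrak{s}$ on $W$ with $c_1(\mathfrak{s})^{2}=-b_{2}(W)$, the map $F_{W,\mathfrak{s}}\colon CF^{-}(Y)\to CF^{-}(Y')$ preserves the absolute grading after the shift $[-2]$ and is a local equivalence of $U$-complexes, because $W$ is negative definite. Choose $\mathfrak{s}$ to be a self-conjugate and $\tilde\tau$-invariant $\mathrm{spin}^c$-structure of minimal square (unique up to sign in case~(1); in case~(2), the generator whose restrictions to the two handle traces are swapped under $\tilde\tau$). Then naturality of $\iota$ under $\mathrm{spin}^c$-cobordism maps gives $\iota\circ F_{W,\mathfrak{s}}\simeq F_{W,\mathfrak{s}}\circ\iota$, so $F_{W,\mathfrak{s}}$ is a local map for the underlying $\iota$-complex structure.

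The heart of the argument is determining which of $\tau$ or $\iota\circ\tau$ is preserved on the chain level. Modelling $F_{W,\mathfrak{s}}$ through an equivariant Heegaard triple subordinate to a bouquet of the attaching link, one can arrange that $\tilde\tau$ acts on the Heegaard data and deduce $F_{W,\mathfrak{s}}\circ\tau\simeq\tau'\circ F_{W,\mathfrak{s}}$ provided that $\tilde\tau$ preserves the orientation of each attaching circle. This holds automatically in the periodic case of~(1) and in case~(2). In the strongly invertible case, however, $\tilde\tau$ reverses the orientation of the attaching circle, and the passage between the two $\tilde\tau$-related Heegaard triples involves the conjugation symmetry underlying $\iota$; one is then forced into the relation $F_{W,\mathfrak{s}}\circ(\iota\circ\tau)\simeq(\iota\circ\tau')\circ F_{W,\mathfrak{s}}$ instead. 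In every case, combining this with the $\iota$-naturality above exhibits $F_{W,\mathfrak{s}}$ as a local map between the $\iota$-complexes that compute the two sides of the claimed inequality in $\mathfrak{I}$.

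The main obstacle I expect is exactly this last step: verifying the commutation relations on the chain level while keeping careful track of how $\tilde\tau$ interacts with the conjugation symmetry $\iota$. This requires a $\tilde\tau$-equivariant Heegaard triple and a close accounting of how the triangle counts defining $F_{W,\mathfrak{s}}$ transform under $\tilde\tau$; it is also the step that genuinely \emph{forces} the dichotomy between "periodic $\rightsquigarrow\tau$" and "strongly invertible $\rightsquigarrow\iota\circ\tau$" in part~(1), and that makes both compatibilities available simultaneously in the swap-of-components geometry of part~(2).
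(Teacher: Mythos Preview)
The paper does not supply its own proof of this statement: Theorem~\ref{thm3.2} is quoted verbatim from \cite[Theorem~1.5]{Dai2020CorksIA} and used as a black box, with only the terminology (\emph{spin$^c$-fixing}, \emph{spin$^c$-conjugating}, \emph{interchanging $(-1,-1)$-cobordism}) recorded in the paragraph following the theorem. So there is nothing in the present paper to compare your proposal against.

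That said, your outline is a faithful sketch of the argument in \cite{Dai2020CorksIA}. One point of framing is worth sharpening: the dichotomy in part~(1) is cleaner to state in terms of the action of $\tilde\tau$ on $H_2(W;\Z)\cong\Z$ rather than on the orientation of the attaching circle. When $(K,\tau)$ is periodic, $\tilde\tau$ fixes the homology class of the core of the $2$-handle, hence fixes every spin$^c$ structure on $W$; this is exactly what the paper calls a \emph{spin$^c$-fixing cobordism} and yields $F_{W,\mathfrak s}\circ\tau\simeq\tau'\circ F_{W,\mathfrak s}$. When $(K,\tau)$ is strongly invertible, $\tilde\tau$ acts by $-1$ on $H_2(W;\Z)$ and therefore sends each spin$^c$ structure to its conjugate; this is the \emph{spin$^c$-conjugating} case, and composing with the conjugation naturality of the cobordism map forces the relation $F_{W,\mathfrak s}\circ(\iota\circ\tau)\simeq(\iota\circ\tau')\circ F_{W,\mathfrak s}$. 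In part~(2) the swap of the two handles both fixes and conjugates the relevant spin$^c$ structure (it permutes the two generators of $H_2$, and there is a $\tilde\tau$-invariant spin$^c$ structure with $c_1=e_1-e_2$ as well as the self-conjugate one), which is why both inequalities hold. Your identification of the chain-level commutation as the genuine technical obstacle is accurate; that is precisely where the work in \cite{Dai2020CorksIA} lies.
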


Attaching a 2-handle along \(K\) (or 2-handles along \(L\)), as described in Theorem \ref{thm3.2}, yields an equivariant negative-definite cobordism from \((Y,\tau)\) to \((Y',\tau')\). In case (1) of Theorem \ref{thm3.2}, the resulting cobordism is called a \(spin^c\)-\(fixing\ cobordism\) if \(K\) is periodic, and is called a \(spin^c\)-\(conjugating\ cobordism\) if \(K\) is strongly invertible. In case (2), the corresponding cobordism is called an \(interchanging\ (-1,-1)\)-\(cobordism\).

Theorem \ref{thm3.2} enables us to show the non-triviality of \(h_{\tau}(Y)\) or \(h_{\iota\circ\tau}(Y)\) by constructing an equivariant negative-definite cobordism from \((Y,\tau)\) to another pair \((Y',\tau')\) whose invariants are already understood. The following result is useful for constructing examples of strong corks:

\begin{lemma}\cite[Lemma 7.8]{Dai2020CorksIA}\label{lem3.3}
    Let \(T_{2,2n+1}\) be the right-handed \((2,2n+1)\)-torus knot, equipped with the involutions \(\tau\) and \(\sigma\) indicated in Figure \ref{torus}. Let \(B_n=S^3_{-1}(T_{2,2n+1})=\Sigma (2,2n+1, 4n+3)\). Then
    \[
    h_{\tau}(B_n)=h_{\iota\circ\sigma}(B_n)<0.
    \]
\end{lemma}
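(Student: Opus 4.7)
The plan is to compute the two $\iota$-complex invariants $h_\tau(B_n)$ and $h_{\iota \circ \sigma}(B_n)$ directly from the surgery presentation $B_n = S^3_{-1}(T_{2,2n+1})$ and then compare them. Since $T_{2,2n+1}$ is an L-space knot, the Ozsv\'ath--Szab\'o mapping cone formula gives an explicit staircase presentation of $CF^-(B_n)$, and the local equivalence class of the standard involutive complex $(CF^-(B_n)[-2], \iota)$ has been determined in prior work of Hendricks--Manolescu, Dai--Manolescu, and related papers to be a non-trivial element of $\mathfrak{I}$ strictly below $0$.

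I would then analyze how the two symmetries of $T_{2,2n+1}$ extend to $B_n$. Here $\tau$ is the periodic $\pi$-rotation (axis disjoint from the knot) and $\sigma$ the strong inversion (axis meeting the knot in two points), as displayed in Figure \ref{torus}; both extend to involutions on $B_n$ by the equivariant surgery construction reviewed in Section \ref{sectioncork}. Using equivariant Heegaard diagrams for $B_n$ adapted to each symmetry, together with the naturality of Heegaard Floer homology, one writes down the induced actions $\tau_*$ and $\sigma_*$ on the staircase generators. The equality $h_\tau(B_n) = h_{\iota \circ \sigma}(B_n)$ should follow from exhibiting a direct local equivalence between $(CF^-(B_n)[-2], \tau_*)$ and $(CF^-(B_n)[-2], \iota \circ \sigma_*)$, reflecting the algebraic fact that composing with $\iota$ exchanges the roles of the two types of symmetries.

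The non-triviality $h_\tau(B_n) < 0$ then follows by comparing the resulting $\iota$-complex with the trivial complex $(CF^-(S^3)[-2], \iota)$: Theorem \ref{thm3.2} applied to the surgery cobordism places $h_\tau(B_n)$ on one side of $0$ in the partial order, while the non-trivial staircase structure (equivalently, the non-trivial Alexander polynomial of $T_{2,2n+1}$) obstructs the reverse local map, giving strict inequality. The main obstacle will be the middle step: the explicit computation of $\tau_*$ and $\sigma_*$ on the staircase complex, which requires a carefully chosen equivariant Heegaard diagram for $B_n$ displaying both symmetries and vigilant bookkeeping of signs, basepoints, and grading shifts.
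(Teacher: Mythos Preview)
This lemma carries no proof in the paper: it is quoted verbatim as \cite[Lemma~7.8]{Dai2020CorksIA} and used as a black box in the proof of Theorem~\ref{thm_Y_m,n}. So there is nothing in the present paper to compare your proposal against.

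That said, a brief remark on your plan versus the argument in the cited source. Dai--Hedden--Mallick do \emph{not} carry out the step you flag as the main obstacle, namely an explicit computation of $\tau_*$ and $\sigma_*$ from an equivariant Heegaard diagram. Instead they exploit a rigidity phenomenon: since $B_n$ is a Seifert fibered (indeed almost-rational plumbed) homology sphere, $CF^-(B_n)$ is modelled by a monotone graded root, and they show that on such a complex there are, up to chain homotopy, only two grading-preserving homotopy involutions --- the identity and the canonical $\iota$. The actions of $\tau$ and $\sigma$ are then pinned down by structural considerations (how each symmetry interacts with the spin$^c$ conjugation and the $J_0$-symmetry of the graded root), not by tracking intersection points in a diagram. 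This shortcut is what makes the lemma tractable; your proposed direct computation would be substantially harder and is not what is done.

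Separately, your last paragraph is too loose to be a proof. Appealing to Theorem~\ref{thm3.2} for the $(-1)$-surgery cobordism $S^3 \to B_n$ gives $0 = h_\bullet(S^3) \leq h_\bullet(B_n)$ for the relevant flavour, which is the \emph{wrong} direction for concluding $h_\bullet(B_n) < 0$; the strict inequality in DHM comes from the explicit identification of the local class with that of the nontrivial $\iota$-complex of the staircase, not from a cobordism bound against $S^3$.
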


\begin{figure}[h!]
\begin{center}
\includegraphics[width=2.0in]{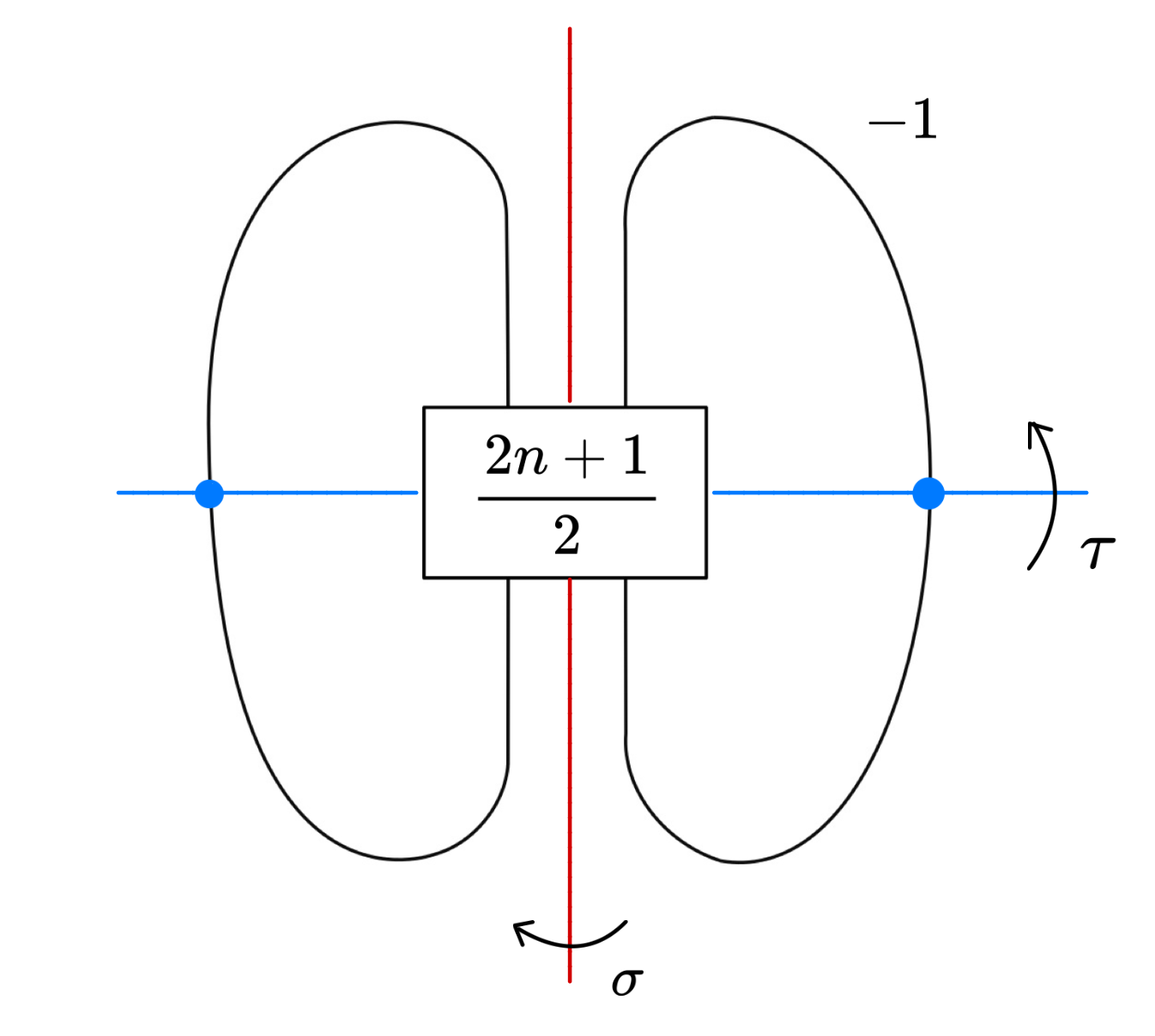}
\caption{\(S^3_{-1}(T_{2,2n+1})=\Sigma (2,2n+1, 4n+3)\)}
\label{torus}
\end{center}
\end{figure}

As shown in Figure \ref{9_46}, the knot \(\overline{9}_{46}=P(-3,3,-3)\) admits two strong involutions \(\tau\) and \(\sigma\). It is known that \((S^3_{+1}(\overline{9}_{46}),\tau)\) is the boundary of the Akbulut cork. Note that \((Y_{1,1},\sigma)\) appearing in Theorem \ref{thm_Y_m,n} is equivariantly diffeomorphic to \((S^3_{+1}(\overline{9}_{46}),\sigma)\).

\begin{theorem}\cite[Theorem 1.11]{Dai2020CorksIA}
     Let \(Y=S^3_{+1}(\overline{9}_{46})\) be given by \((+1)\)-surgery on the knot \(\overline{9}_{46}\) equipped with the indicated involutions \(\tau\) and \(\sigma\). Then
    \[
    h_{\tau}(Y)<0,\ h_{\iota\circ\sigma}(Y)<0.
    \]
\end{theorem}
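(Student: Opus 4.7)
The plan is to combine Lemma~\ref{lem3.3}, which supplies $h_\tau(B_n) = h_{\iota\circ\sigma}(B_n) < 0$ for the Brieskorn spheres $B_n = \Sigma(2,2n+1,4n+3)$, with the monotonicity statement of Theorem~\ref{thm3.2}. It suffices to produce, for some $n$, an equivariant negative-definite cobordism from $(B_n,\tau)$ to $(Y,\tau)$ whose $2$-handles are attached along periodic knots or interchanging pairs (so that Theorem~\ref{thm3.2}(1), periodic case, together with Theorem~\ref{thm3.2}(2) yields $h_\tau(B_n) \leq h_\tau(Y)$), and an analogous equivariant cobordism from $(B_n,\sigma)$ to $(Y,\sigma)$ whose $2$-handles are attached along strongly invertible knots or interchanging pairs (so that Theorem~\ref{thm3.2}(1), strongly invertible case, together with Theorem~\ref{thm3.2}(2) yields $h_{\iota\circ\sigma}(B_n) \leq h_{\iota\circ\sigma}(Y)$). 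Chaining these inequalities with Lemma~\ref{lem3.3} then gives the desired strict negativity.

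The construction proceeds in two parallel tracks, one per symmetry. For the $\tau$-track I would start from the equivariant surgery diagram of $Y$ coming from the strong involution $\tau$ of $\overline{9}_{46}$, and then introduce equivariant $(-1)$-framed $2$-handles (either single periodic unknots or symmetric pairs) and carry out equivariant handle slides so as to convert the diagram into the standard $(-1)$-surgery diagram of some $\Sigma(2,2n+1,4n+3)$ endowed with the periodic-type involution of Figure~\ref{torus}. For the $\sigma$-track I would begin from the equivariant surgery diagram produced by the second strong involution $\sigma$ and target instead the strongly invertible involution on the same Brieskorn sphere. Reading each sequence of moves in reverse then exhibits the negative-definite equivariant cobordism needed to feed into Theorem~\ref{thm3.2}.

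The main obstacle is the equivariant Kirby calculus itself: at every intermediate stage one must verify that each surgery curve is preserved setwise by the involution and carries the correct symmetry type (periodic, strongly invertible, or paired with a partner) so that Theorem~\ref{thm3.2} applies at that step. Negative-definiteness is automatic because we attach only $(-1)$-framed handles, so the real content is the bookkeeping that ensures every handle is compatible with the ambient involution, and that the endpoints are identified with the claimed $B_n$ on one side and $Y$ on the other. Once these two equivariant handle sequences are produced, iterating Theorem~\ref{thm3.2} from the strict inequalities of Lemma~\ref{lem3.3} delivers $h_\tau(Y) < 0$ and $h_{\iota\circ\sigma}(Y) < 0$ simultaneously.
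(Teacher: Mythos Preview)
Your overall strategy---reduce to Lemma~\ref{lem3.3} via an equivariant negative-definite $2$-handle cobordism and invoke the monotonicity of Theorem~\ref{thm3.2}---is exactly the method of \cite{Dai2020CorksIA}, and the present paper does not give a separate proof of this statement (it is quoted from \cite[Theorem~1.11]{Dai2020CorksIA}). The $\sigma$-half is in fact reproved here as the case $m=n=1$ of Theorem~\ref{thm_Y_m,n}: Figure~\ref{Ycob} builds an interchanging $(-1,-1)$-cobordism from $(Y_{1,1},\sigma)=(Y,\sigma)$ to $(B_1,\sigma)$, giving $h_{\iota\circ\sigma}(Y)\leq h_{\iota\circ\sigma}(B_1)<0$.

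There is, however, a genuine direction error in your write-up. You ask for a negative-definite cobordism \emph{from} $(B_n,\tau)$ \emph{to} $(Y,\tau)$, concluding $h_\tau(B_n)\leq h_\tau(Y)$; combined with $h_\tau(B_n)<0$ this yields no information about the sign of $h_\tau(Y)$. What Theorem~\ref{thm3.2} requires is a cobordism \emph{from} $(Y,\tau)$ \emph{to} $(B_n,\tau)$, so that $h_\tau(Y)\leq h_\tau(B_n)<0$. Your second paragraph actually describes such a cobordism correctly---attach $(-1)$-framed handles to $Y$ and simplify to $B_n$---but you then propose to ``read the sequence in reverse,'' which undoes the correct orientation. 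No reversal is needed: the $2$-handle cobordism built on $Y\times[0,1]$ already goes the right way and is negative-definite. The same correction applies verbatim to the $\sigma$-track.
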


\begin{figure}[h!]
\begin{center}
\includegraphics[width=1.4in]{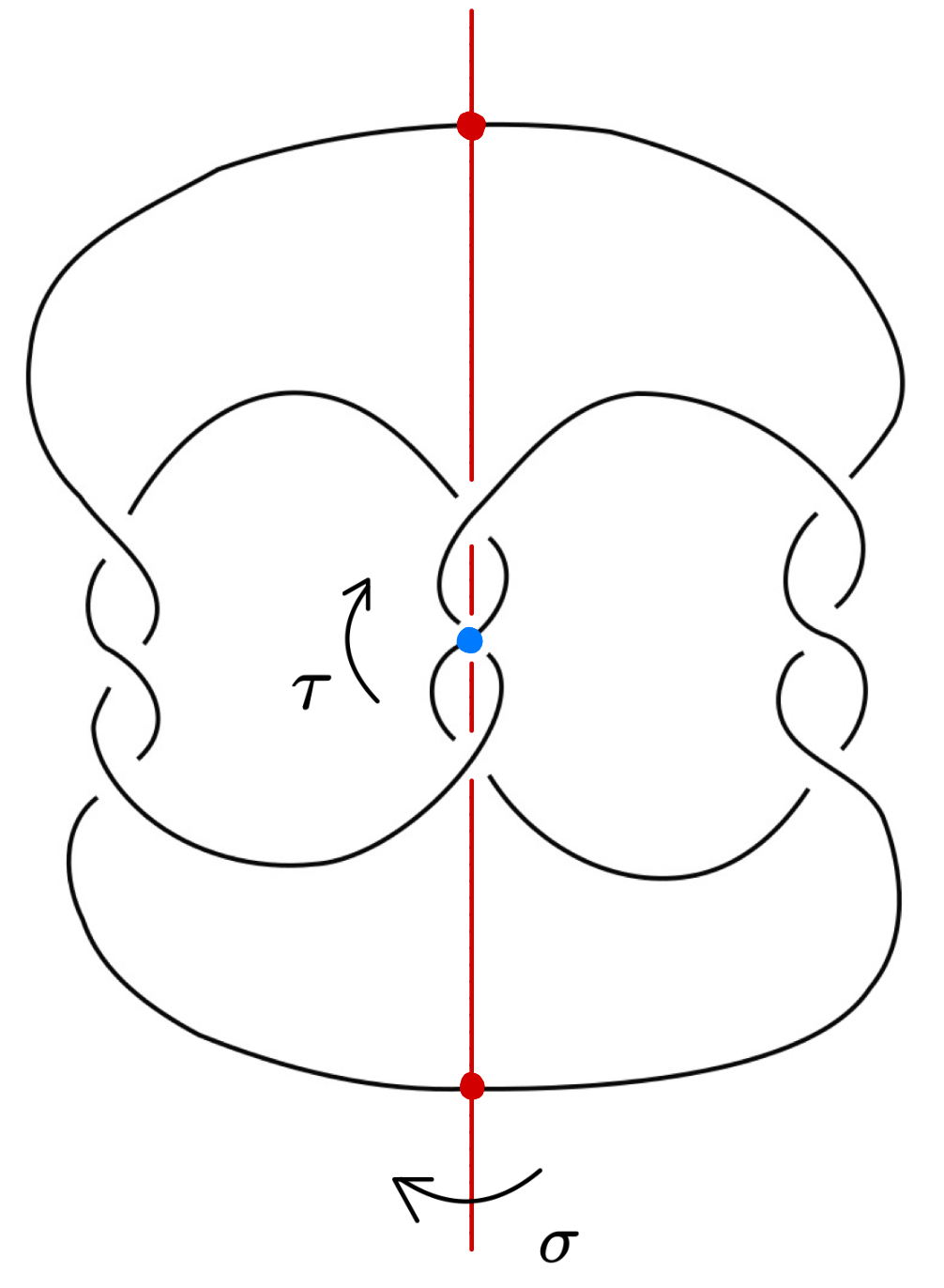}
\caption{\(\overline{9}_{46}=P(-3,3,-3)\).}
\label{9_46}
\end{center}
\end{figure}

\subsection{Proof of Theorem \ref{thm_Y_m,n}}
The following lemma provides useful equivariant operations that will be used in the constructions below. These operations were used in \cite[Theorem 4.2]{Auckly2014StableII} without proof. Here, we show them explicitly.

\begin{figure}[h!]
\begin{center}
\includegraphics[width=4.0in]{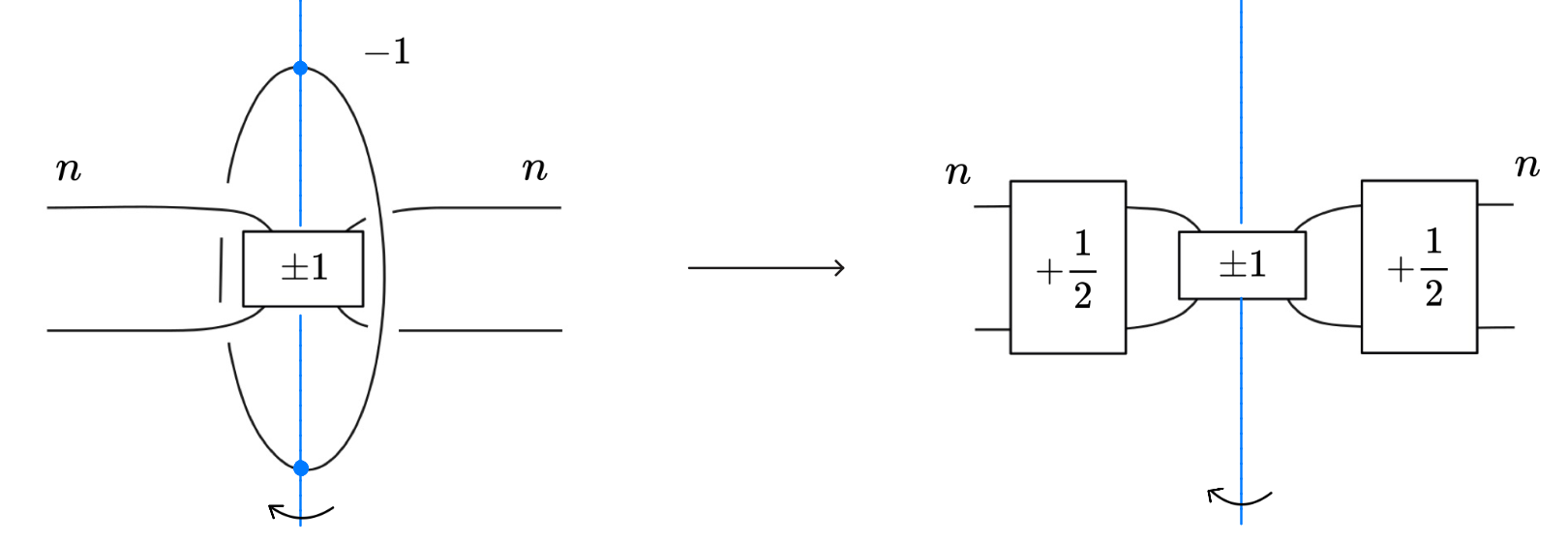}
\caption{The operation in Lemma \ref{lem3.5}, where the central unknot has framing \(-1\).}
\label{operation-1}
\end{center}
\end{figure}

\begin{figure}[h!]
\begin{center}
\includegraphics[width=4.0in]{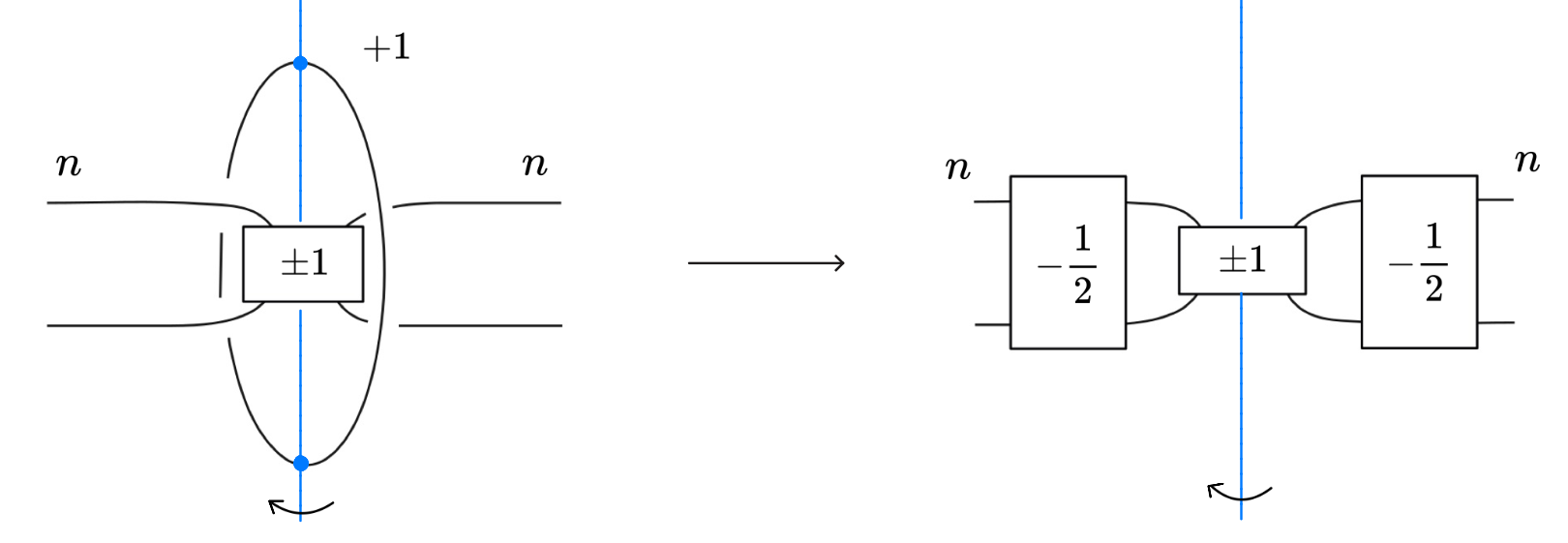}
\caption{The operation in Lemma \ref{lem3.5}, where the central unknot has framing \(+1\).}
\label{operation+1}
\end{center}
\end{figure}

\begin{lemma}\label{lem3.5}
    The operations displayed in Figures \ref{operation-1} and \ref{operation+1} can be performed equivariantly. If the left and right components are distinct, their framings remain unchanged.
\end{lemma}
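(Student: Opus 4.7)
The plan is to verify each of the two pictured operations by exhibiting an explicit equivariant sequence of Kirby moves that realizes it. Throughout, $\tau$ is a $180^\circ$ rotation about an axis drawn in the diagram, so the first step is to redraw the relevant portion of the picture so that this rotational symmetry is manifest and the "left" and "right" strands are exchanged by $\tau$. Once the axis is placed in the standard position, the problem reduces to producing a sequence of blow-ups, isotopies, handle slides, and blow-downs, each of which visibly commutes with the rotation.

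For each operation I would introduce an auxiliary $\pm 1$-framed unknot $U$ positioned on the axis of rotation, so that $\tau(U)=U$ setwise, and then carry out symmetric handle slides across it, ending with a blow-down of $U$. Since introducing the canceling Hopf pair and performing the blow-down are both realized by standard diffeomorphisms of the ambient 4-manifold, and each intermediate diagram is $\tau$-invariant by construction, the composed diffeomorphism of the surgered 3-manifold commutes with $\tau$. This is exactly what it means for the operation to be performed equivariantly. Whenever a slide is performed on one side of the axis, its mirror image on the other side must be performed simultaneously; doing this pair of slides together is what produces an honestly $\tau$-equivariant Kirby move.

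The framing claim then reduces to the standard calculus. Recall that blowing down a $\mp 1$-framed unknot $U$ changes the framing of each component $C$ by $\pm \mathrm{lk}(C,U)^2$ and changes the linking number of any two distinct components $C_1,C_2$ by $\pm \mathrm{lk}(C_1,U)\cdot \mathrm{lk}(C_2,U)$. In the case that the left and right strands belong to distinct components $L$ and $R$, each has linking number $\pm 1$ with the relevant auxiliary unknot(s), and the construction is arranged so that the contribution from the two $\tau$-symmetric blow-ups/blow-downs cancels on each of $L$ and $R$ individually, leaving only a change in the mutual linking number $\mathrm{lk}(L,R)$; hence the framings are unchanged. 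If instead both strands belong to the same component $C$, the two contributions to $\mathrm{lk}(C,U)$ add rather than cancel, producing the expected framing shift that genuinely records the introduction or removal of a twist.

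The main obstacle is organizational rather than conceptual: one must draw the intermediate diagrams carefully, keep track of which moves are axis-preserving versus axis-exchanging, and verify that every isotopy is chosen to commute with the rotation. Once these diagrams are in place, the equivariance of each elementary step is visible by inspection and the framing computation is a direct application of the formula recorded above, so no further substantive input is required.
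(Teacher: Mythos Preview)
Your overall strategy---exhibit the operation as an explicit sequence of $\tau$-equivariant Kirby moves so that equivariance is visible at each step---is the same as the paper's. The paper carries this out by simply drawing the intermediate diagrams, splitting into two cases according to the sign of the central crossing between the two strands; it does not introduce any auxiliary unknot beyond the central $\pm 1$-framed one already present in the statement.

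What you have written, however, is a plan rather than a proof, and the framing portion of that plan is not justified as stated. You claim that ``the contribution from the two $\tau$-symmetric blow-ups/blow-downs cancels on each of $L$ and $R$ individually,'' invoking the blow-down formula $\pm\mathrm{lk}(C,U)^2$. But this shift depends only on the \emph{square} of the linking number and on the sign of the framing of $U$; two $\tau$-exchanged auxiliary unknots must carry the same framing for the diagram to be $\tau$-invariant, so their blow-downs contribute with the \emph{same} sign to the framing of $L$ and therefore add rather than cancel. If instead one of your two moves is a blow-up, introducing an unlinked $\pm 1$-unknot does not change any existing framing, so again there is no cancellation mechanism. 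In short, the symmetry alone does not force the framing contributions to vanish, and your outline does not supply the missing computation. To establish the framing claim you must actually produce the sequence of diagrams and track the framings through each move---which is precisely what the paper's figures do, and where the entire content of the lemma resides.
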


\begin{figure}[h!]
\begin{center}
\includegraphics[width=4.6in]{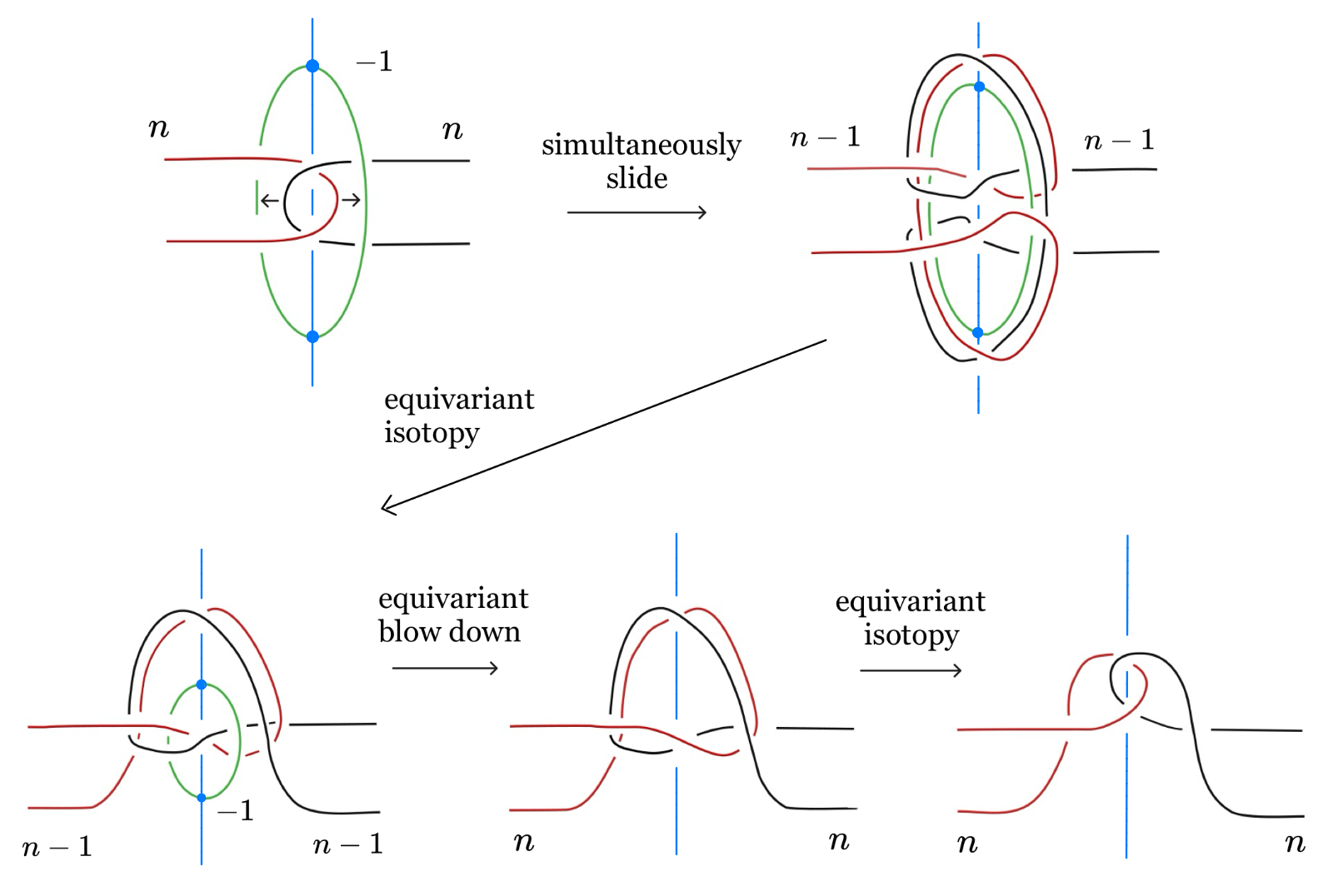}
\caption{The case where the central linking between the red and black components is positive.}
\label{operation1}
\end{center}
\end{figure}

\begin{figure}[h!]
\begin{center}
\includegraphics[width=4.7in]{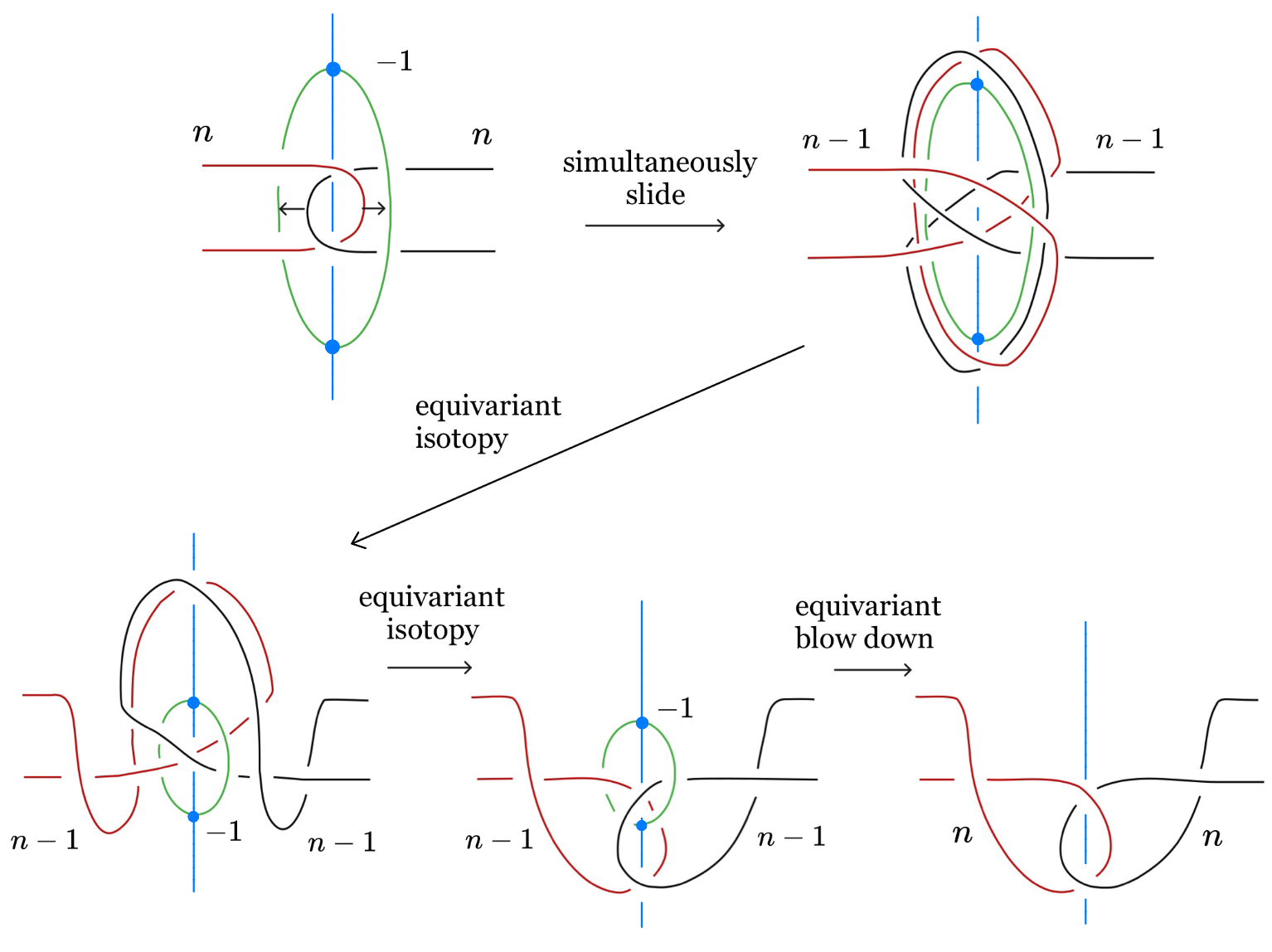}
\caption{The case where the central linking between the red and black components is negative.}
\label{operation2}
\end{center}
\end{figure}

\begin{proof}
We check the case where the central unknot has framing \(-1\).  See Figures \ref{operation1} and \ref{operation2}.
A similar deformation holds for the case where the central unknot has framing \(+1\).    
\end{proof}

\begin{proof}[Proof of Theorem \ref{thm_Y_m,n}]
First, \(Y_{m,n}\) bounds a compact contractible 4-manifold. Indeed, the Kirby diagram obtained from the surgery diagram of \(Y_{m,n}\) by replacing one of the 0-framed components with a dotted circle represents a contractible 4-manifold whose boundary is \(Y_{m,n}\).

We construct an interchanging \((-1,-1)\)-cobordism from \((Y_{m,1},\sigma)\) to \((\Sigma(2,2m+1,4m+3),\sigma)\) as shown in Figure \ref{Ycob}.

\begin{figure}[p]
\begin{center}
\includegraphics[width=5.8in]{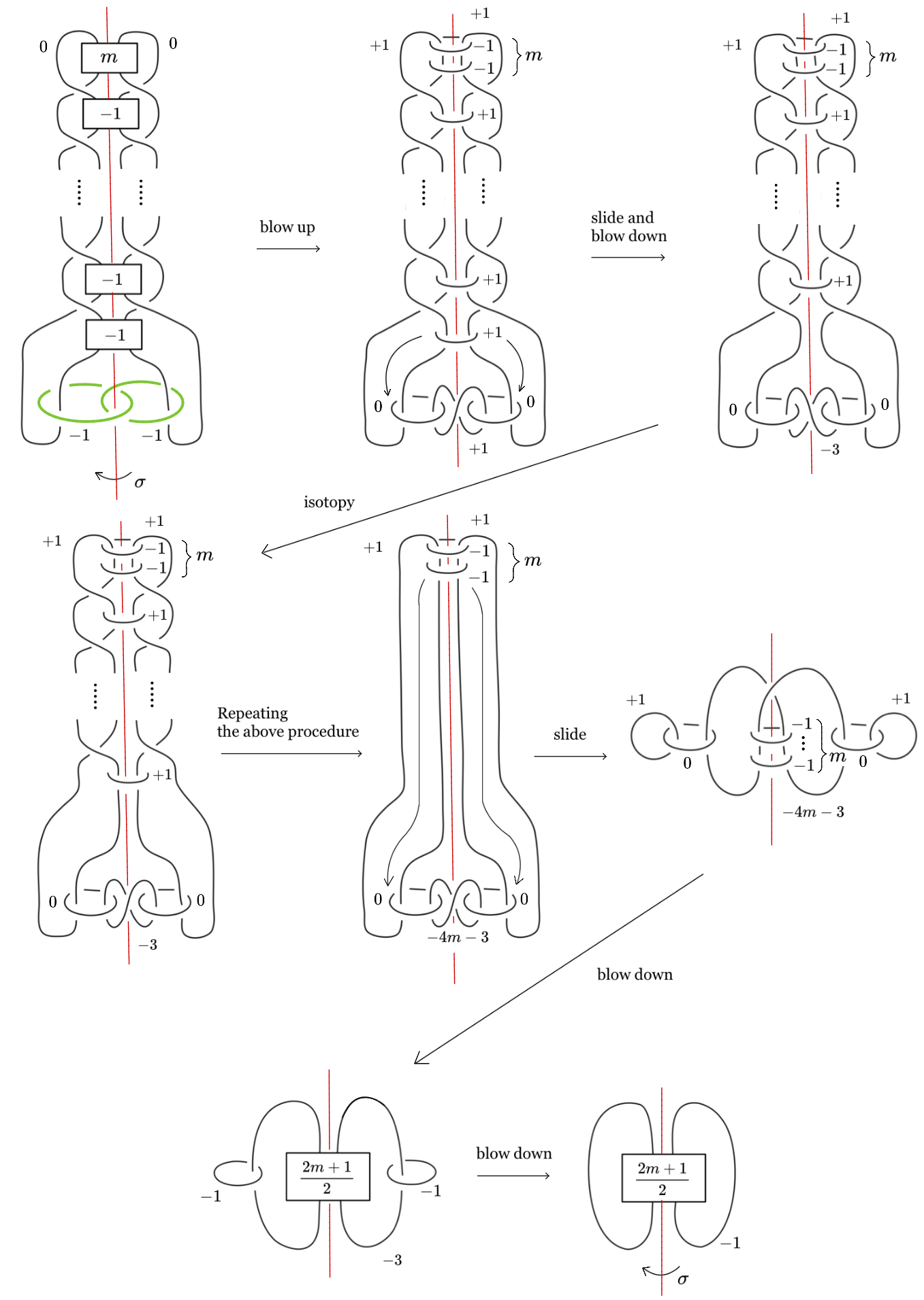}
\caption{The interchanging \((-1,-1)\)-cobordism from \((Y_{m,1},\sigma)\) to \((\Sigma (2,2m+1, 4m+3),\sigma)\) used in the proof of Theorem \ref{thm_Y_m,n}, obtained by attaching an equivariant pair of \((-1)\)-framed 2-handles along the green unknots.}
\label{Ycob}
\end{center}
\end{figure}

This is formed by attaching an equivariant pair of \((-1)\)-framed 2-handles along the green unknots as in the top left of Figure \ref{Ycob}. We see that the condition of 
Theorem \ref{thm3.2} (2) 
is satisfied by simultaneously sliding the left green unknot over the right component of \(Y_{m,1}\) and the right green unknot over the left component of \(Y_{m,1}\). 
By Theorem \ref{thm3.2} (2) and Lemma \ref{lem3.3},
    \[
    h_{\iota\circ\sigma}(Y_{m,1})\leq h_{\iota\circ\sigma}(\Sigma (2,2m+1, 4m+3))<0.
    \]
By Lemma \ref{lem3.5}, we obtain a spin\(^c\)-conjugating \((-1)\)-cobordism from \((Y_{m,n+1},\sigma)\) to \((Y_{m,n}, \sigma)\) as shown in Figure \ref{Ycob2}. By Theorem \ref{thm3.2} (1),
    \[
     h_{\iota\circ\sigma}(Y_{m,n})\leq h_{\iota\circ\sigma}(Y_{m,1})<0.
    \]
Therefore, it follows from Theorem \ref{thm3.1} that \(Y_{m,n}\) is a strong cork.

Moreover, if \(Y'_{m,n}\)
is constructed from \(Y_{m,n}\) by introducing any number of symmetric pairs of negative full twists, then \(Y'_{m,n}\) admits a sequence of interchanging \((-1,-1)\)-cobordisms to \(Y_{m,n}\). Thus \(Y'_{m,n}\) is also a strong cork.
\end{proof}

\begin{remark}
    It can be verified that \(Y_{m,1}\) is diffeomorphic to the boundary \(\partial W_m\), where \((W_m,\tau)\) is the Akbulut-Yasui cork in \cite{zbMATH05660539}. In \cite[Theorem 1.13]{Dai2020CorksIA}, it is shown that \(h_{\tau}(\partial W_m)\leq h_{\tau}(\Sigma (2,2m+1, 4m+3))<0\), which implies that \((\partial W_m,\tau)\) is a strong cork.
\end{remark}

\begin{figure}[h!]
\begin{center}
\includegraphics[width=3.8in]{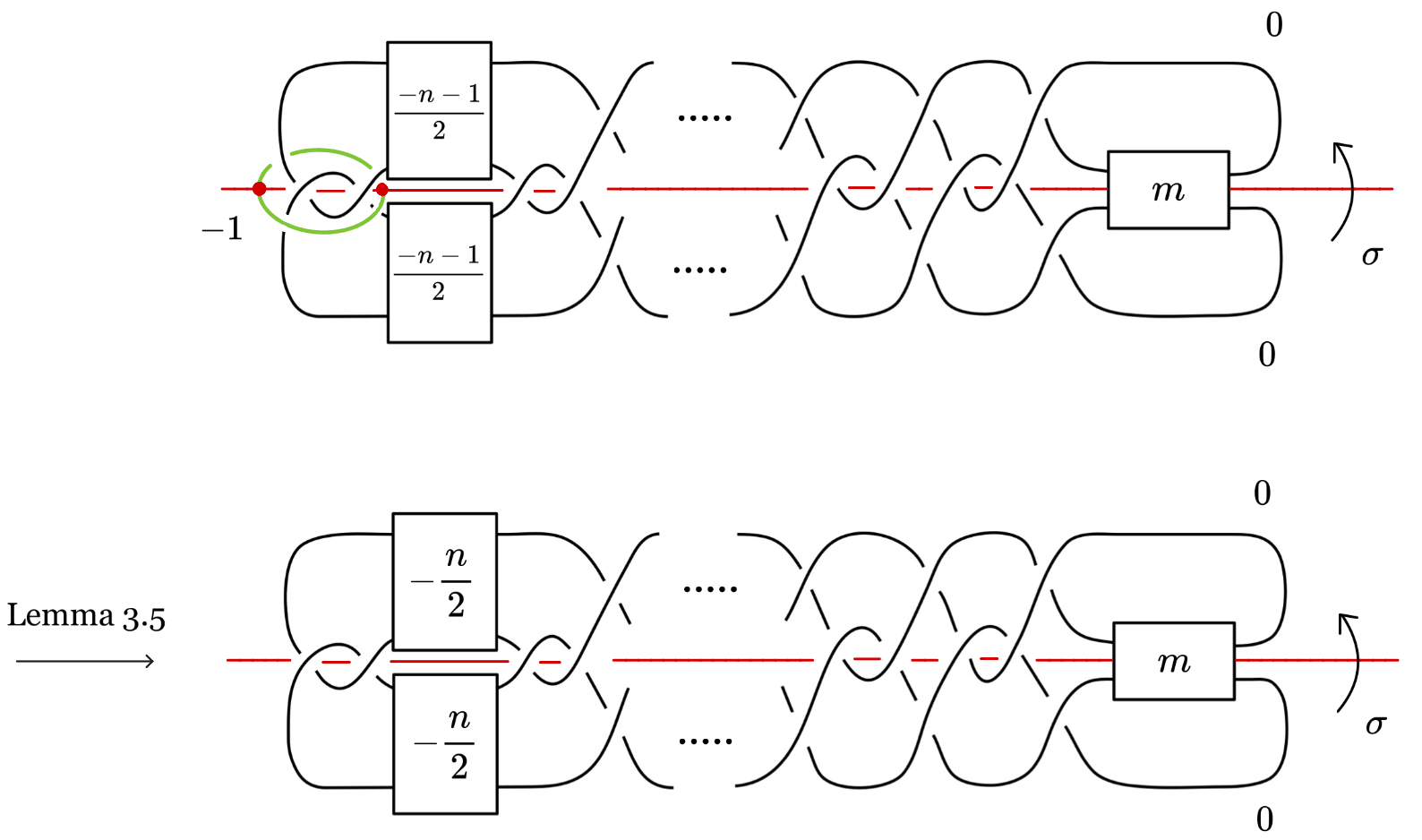}
\caption{The spin\(^c\)-conjugating \((-1)\)-cobordism from \((Y_{m,n+1},\sigma)\) to \((Y_{m,n}, \sigma)\) used in the proof of Theorem \ref{thm_Y_m,n}, obtained by attaching an equivariant \((-1)\)-framed 2-handle along the green unknot}
\label{Ycob2}
\end{center}
\end{figure}

\subsection{Instanton-theoretic methods}
Instanton-theoretic methods developed in \cite{arXiv:2309.02309} enable us to detect strong corks and treat examples inaccessible to Heegaard Floer theory.

\begin{theorem}\label{3.6}\cite[Theorem 1.1]{arXiv:2309.02309}
    Let \((Y,\tau)\) be an oriented integer homology 3-sphere equipped with an orientation-preserving involution on \(Y\). For \(s\in [-\infty, 0]\), there is a real number 
    \[
    r_s(Y,\tau)\in (0,\infty]
    \]
    which is an invariant of the diffeomorphism class of \((Y,\tau)\). Moreover, if there is an equivariant negative-definite cobordism \((W,\tilde{\tau})\) from \((Y,\tau)\) to \((Y',\tau')\) with \(H_1(W,\Z_2)=0\), then
    \[
    r_s(Y,\tau)\leq r_s(Y',\tau').
    \]
    If \(r_s(Y,\tau)<\infty\) and \(W\) is simply connected, then
    \[
    r_s(Y,\tau)< r_s(Y',\tau').
    \]
\end{theorem}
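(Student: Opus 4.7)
The plan is to construct $r_s(Y,\tau)$ as an equivariant refinement of the instanton-theoretic $r_s$-invariant, attaching to $(Y,\tau)$ a spectral number coming from $\tau$-equivariant instanton Floer theory. I would regard the Chern--Simons functional on irreducible $SU(2)$-connections on $Y$ as a $\Z_2$-equivariant functional via pullback by $\tau$, and define $r_s(Y,\tau)$ as the infimum of Chern--Simons levels of $\tau$-equivariant critical points (more precisely, of instantons on $\R\times Y$ whose equivariant relative grading lies in the range controlled by the parameter $s\in[-\infty,0]$) that survive in an $s$-filtered equivariant Floer chain complex, normalized so that $r_s(Y,\tau)>0$.

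To show this is a well-defined diffeomorphism invariant of $(Y,\tau)$, I would restrict to $\tau$-invariant holonomy perturbations of Kronheimer--Mrowka type and argue that generic such perturbations achieve non-degeneracy of the equivariant critical set together with transversality for moduli spaces of $\tau$-equivariant ASD instantons on $\R\times Y$. A standard cobordism argument over the space of equivariant perturbations then shows independence of all choices and gives invariance under equivariant diffeomorphisms.

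For the monotonicity statement, given an equivariant negative-definite cobordism $(W,\tilde\tau)$ from $(Y,\tau)$ to $(Y',\tau')$ with $H_1(W;\Z_2)=0$, I would count $\tilde\tau$-equivariant ASD instantons on $W$ with cylindrical ends to produce a filtration-preserving chain map between the equivariant Floer complexes of the two ends. Negative-definiteness confines the reducibles to the finitely many flat reductions classified by $H^2(W;\Z)$, while $H_1(W;\Z_2)=0$ eliminates exotic double covers so that equivariant reductions on $W$ match the reductions on the boundary. Tracking Chern--Simons levels along this cobordism map yields $r_s(Y,\tau)\leq r_s(Y',\tau')$. When $W$ is furthermore simply connected and $r_s(Y,\tau)<\infty$, a sharper energy estimate shows that equality would force the existence of a $\tilde\tau$-equivariant flat $SU(2)$-connection on $W$ that is nontrivial on at least one end; simple connectedness prevents this and upgrades the inequality to a strict one.

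The main obstacle is the equivariant transversality and compactness/gluing package: achieving transversality inside the constrained class of $\tau$-invariant perturbations, handling $\Z_2$-equivariant bubbling and trajectory breaking so that the Chern--Simons filtration behaves correctly in the equivariant setting, and matching equivariant flat reductions on $W$ with those on its boundary. Ensuring that the resulting cobordism map is well-defined up to filtered equivariant chain homotopy, and that it is monotone in the correct direction with respect to the filtration, is where the bulk of the technical work in \cite{arXiv:2309.02309} lives.
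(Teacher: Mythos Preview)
This theorem is not proved in the paper. It is quoted verbatim as \cite[Theorem 1.1]{arXiv:2309.02309} and used as a black box; the paper supplies no argument of its own for the existence of $r_s(Y,\tau)$ or for the cobordism inequalities. Consequently there is no ``paper's own proof'' to compare your proposal against, and for the purposes of this paper the correct thing to do is simply to cite the result.

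As an outline of what the cited reference might contain, your sketch is in the right spirit---an equivariant upgrade of the Nozaki--Sato--Taniguchi filtered instanton package, with a cobordism-induced filtered chain map giving monotonicity---but it remains a plan rather than a proof. In particular, your description of $r_s(Y,\tau)$ as an ``infimum of Chern--Simons levels of $\tau$-equivariant critical points'' is not how $r_s$ is actually defined even in the non-equivariant case: it is extracted from the filtered chain complex via a specific obstruction-theoretic recipe tracking when the image of a distinguished class vanishes, not by taking an infimum over equivariant critical values. Your strict-inequality argument (``equality would force a nontrivial equivariant flat connection on $W$'') is also not quite the mechanism; the strictness in the simply connected case comes from an energy-gap argument together with the fact that any nontrivial instanton over $W$ must drop the filtration level by a definite amount. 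If you want to reconstruct the proof you should consult \cite{arXiv:2309.02309} directly; for this paper, a citation is all that is needed.
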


It follows from Theorem \ref{3.6} that if \(\tau\) extends over some homology ball \(W\) that \(Y\) bounds, then \(r_s(Y,\tau)=\infty\) for any \(s\in[-\infty, 0]\). Thus, this invariant \(r_s\) can be used to detect strong corks.

In contrast to Theorem \ref{thm3.2} in Heegaard Floer theory, the inequality for the \(r_s\)-invariant holds regardless of whether the cobordism is spin\(^c\)-fixing or spin\(^c\)-conjugating, as in Theorem \ref{3.6}.

The following theorem allows us to treat linear combinations using the \(r_s\)-invariant.

\begin{theorem}\cite[Theorem 7.7]{arXiv:2309.02309}\label{thm3.7}
    Let \(\{(Y_i,\tau_i)\}^{\infty}_{i=1}\) be a sequence of oriented integer homology 3-spheres equipped with orientation-preserving involutions \(\tau_i\). Assume that each \(Y_i\) bounds a compact, contractible 4-manifold, and that the fixed-point set of each \(\tau_i\) is a copy of \(S^1\), so that the equivariant connected sum operation is well-defined. Suppose that:
    \begin{enumerate}
        \item \(r_0(Y_1,\tau_1)>r_0(Y_2,\tau_2)>\cdots >r_0(Y_i,\tau_i)>\cdots\),
        \item \(r_0(Y_1,\tau_1)<\infty\),
        \item \(r_0(-Y_i,\tau_i)=\infty\) for each \(i\).
    \end{enumerate}
    Then, any nontrivial linear combination of elements in \(\{(Y_i,\tau_i)\}^{\infty}_{i=1}\) is a strong cork. 
\end{theorem}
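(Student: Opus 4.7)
The plan is to show that \(r_0(Y,\tau)<\infty\) for the connected sum \(Y=a_1Y_1\#\cdots\#a_kY_k\); by the remark following Theorem \ref{3.6}, this will imply that \(\tau\) does not extend over any homology ball bounded by \(Y\), so \((Y,\tau)\) is a strong cork. That \(Y\) itself bounds a compact contractible 4-manifold follows from boundary-connect-summing contractible fillings of the summands \(\pm Y_i\); these exist because each \((Y_i,\tau_i)\) is a strong cork by hypotheses (1)--(2). As a preliminary reduction, the property of being a strong cork is invariant under orientation reversal (an equivariant homology-ball filling of \(Y\) yields one of \(-Y\) and vice versa), so we may replace \((Y,\tau)\) by \((-Y,\tau)\) if needed and thus assume there is at least one index with \(a_i>0\).

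Let \(i_0:=\min\{i:a_i>0\}\). The strategy is to build an equivariant negative-definite cobordism \((W,\tilde{\tau})\) with \(H_1(W;\Z_2)=0\) from \((Y,\tau)\) to \((Y_{i_0},\tau_{i_0})\). Theorem \ref{3.6} would then yield \(r_0(Y,\tau)\leq r_0(Y_{i_0},\tau_{i_0})\leq r_0(Y_1,\tau_1)<\infty\) using hypotheses (1) and (2). The cobordism is assembled by cancelling the remaining summands one at a time. For each \(-Y_i\) summand (when \(a_i<0\)), hypothesis (3) --- that \(r_0(-Y_i,\tau_i)=\infty\) --- should furnish an equivariant negative-definite bordism that cancels this copy without increasing \(r_0\). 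For each extra \(Y_i\) summand with \(i>i_0\), the strict decrease \(r_0(Y_{i_0})>r_0(Y_i)\) from (1), combined with Theorem \ref{3.6}, enables the cancellation to proceed.

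The main obstacle is constructing these equivariant negative-definite cobordisms rigorously. Since each \((Y_i,\tau_i)\) is itself a strong cork, there is no equivariant contractible filling of \(Y_i\), so the cancellation cannot be a literal cap-off; it must be implemented at the level of the instanton moduli spaces defining \(r_s\), most likely via an equivariant neck-stretching or connected-sum gluing argument that translates the emptiness encoded by \(r_0(-Y_i,\tau_i)=\infty\) into a usable negative-definite bordism. A secondary issue is arranging the successive cancellations so that the resulting \(W\) has \(H_1(W;\Z_2)=0\) (required to apply Theorem \ref{3.6}), which limits the flexibility of the construction and may force the use of the strict-inequality portion of Theorem \ref{3.6} at certain steps.
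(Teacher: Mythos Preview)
This theorem is quoted from \cite[Theorem 7.7]{arXiv:2309.02309} and is not proved in the present paper, so there is no in-paper argument to compare against. Commenting on your proposal on its own terms:

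The target $r_0(Y,\tau)<\infty$ is the right one, but the mechanism you propose is not. You aim to realize $r_0(Y,\tau)\leq r_0(Y_{i_0},\tau_{i_0})$ by building a genuine equivariant negative-definite 4-manifold cobordism $W$ from $Y$ to $Y_{i_0}$ and then invoking Theorem~\ref{3.6}. As you yourself observe, such a $W$ cannot arise by capping off the extra summands, because the $(Y_i,\tau_i)$ admit no equivariant homology-ball fillings. Your fallback---that neck-stretching should furnish ``a usable negative-definite bordism''---conflates two distinct things: a neck-stretching argument on the instanton moduli spaces does not manufacture a 4-manifold; it proves a \emph{connected-sum inequality} for $r_0$ directly, with no cobordism appearing. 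The proof in \cite{arXiv:2309.02309} proceeds exactly this way, establishing and then applying such an inequality, and Theorem~\ref{3.6} plays no role in that step. So the obstacle you flag is not a technicality to be filled in later: the cobordism you seek does not exist, and the argument must be reorganized around the connected-sum formula from the outset. With that formula in hand, hypothesis~(3) handles the $-Y_i$ summands and the strict monotonicity in hypothesis~(1) handles the positive $Y_i$ summands of larger $r_0$; this is also why the natural distinguished index is $i_0=\max\{i:a_i>0\}$ (the summand with the \emph{smallest} finite $r_0$), not the minimum you chose.

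A minor logical slip: your claim that each $Y_i$ bounds a contractible manifold ``because each $(Y_i,\tau_i)$ is a strong cork by hypotheses (1)--(2)'' is circular. Conditions (1)--(2) yield only $r_0(Y_i,\tau_i)<\infty$, which shows $\tau_i$ does not extend over any homology ball; the existence of a contractible filling is a separate (implicit) hypothesis, not a consequence.
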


As stated in the following lemma, the boundary of the Akbulut cork is shown to be non-trivial with respect to the \(r_0\)-invariant:

\begin{lemma}\cite[Lemma 7.1]{arXiv:2309.02309}\label{lem3.8}
    Let \(Y=S^3_{+1}(\overline{9}_{46})\) equipped with the indicated  involutions \(\tau\) and \(\sigma\) displayed in Figure \ref{9_46}. Then
    \[
    r_0(Y,\tau)<\infty,\ r_s(-Y,\tau)=\infty
    \]
    for any \(s\in[-\infty,0]\). This statement holds with \(\tau\) replaced by \(\sigma\).
\end{lemma}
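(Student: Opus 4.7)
Plan for the proof. The case of $\tau$ is precisely \cite[Lemma 7.1]{arXiv:2309.02309}, so I would cite it directly and concentrate on the statement for $\sigma$; the strategy is to transport the arguments to $\sigma$ by producing analogous equivariant 4-manifold structures.

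For the finiteness $r_0(Y,\sigma)<\infty$: I would specialize the construction in the proof of Theorem \ref{thm_Y_m,n} to $m=n=1$. This gives a simply-connected, negative-definite, $\sigma$-equivariant interchanging $(-1,-1)$-cobordism from $(Y,\sigma)=(Y_{1,1},\sigma)$ to the Brieskorn sphere $(B_1,\sigma)=(\Sigma(2,3,7),\sigma)$; it is exactly the $m=1$ case of the cobordism pictured in Figure \ref{Ycob}. The monotonicity clause of Theorem \ref{3.6} then yields
\[
r_0(Y,\sigma)\;\leq\;r_0(B_1,\sigma),
\]
so the finiteness reduces to a Brieskorn-sphere calculation. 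The latter should follow from the same equivariant instanton analysis used in the cited paper to prove $r_0(B_1,\tau)<\infty$: the standard $(-1)$-trace of the right-handed trefoil $T_{2,3}$, equipped with the involution $\sigma$ indicated in Figure \ref{torus}, is an equivariant negative-definite bounding 4-manifold that supports a non-trivial equivariant reducible configuration, forcing $r_0$ to be finite.

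For the vanishing $r_s(-Y,\sigma)=\infty$ for all $s\in[-\infty,0]$: by the observation immediately following Theorem \ref{3.6}, it is enough to exhibit any smooth homology 4-ball bounded by $-Y$ over which $\sigma$ extends. To produce one, I would use that $\overline{9}_{46}$ is slice, combined with the fact that its slice disk can be chosen $\sigma$-equivariantly. Concretely, realize $-Y$ as the boundary of the $(-1)$-framed trace $X$ of the mirror of $\overline{9}_{46}$; cap off the 2-handle core with a $\sigma$-equivariant slice disk in $B^4$ to obtain an embedded $2$-sphere $S\subset X$ of self-intersection $-1$, and then blow $S$ down equivariantly. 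The result is a contractible 4-manifold with boundary $-Y$ on which $\sigma$ acts smoothly, and this forces $r_s(-Y,\sigma)=\infty$ simultaneously for every $s\in[-\infty,0]$.

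The main obstacle is the production of the $\sigma$-equivariant slice disk for $\overline{9}_{46}$ and checking that the subsequent blow-down genuinely preserves the involution. By Proposition \ref{prop_sakuma} the two strong involutions $\tau$ and $\sigma$ are genuinely distinct, so one cannot simply reuse the equivariant sliceness underlying the $\tau$-case; one must inspect the pretzel diagram $P(-3,3,-3)$ with respect to the axis of $\sigma$ specifically and locate an equivariant ribbon move there. Once the equivariant slicing is in hand, the rest of the argument is formal.
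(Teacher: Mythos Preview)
The paper does not supply its own proof of this lemma: it is quoted verbatim from \cite[Lemma 7.1]{arXiv:2309.02309}, with the $\sigma$-statement included in the citation. So there is no argument in the paper to compare against; your proposal is an attempt at an independent proof.

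There is a genuine gap in your finiteness argument for $r_0(Y,\sigma)$. You propose to use the interchanging $(-1,-1)$-cobordism of Figure~\ref{Ycob} (case $m=1$) from $(Y_{1,1},\sigma)$ to the Brieskorn sphere $(B_1,\sigma)=(\Sigma(2,3,7),\sigma)$, and then deduce finiteness from $r_0(B_1,\sigma)<\infty$. But this last inequality is false: as the paper itself records in the Remark following the proof of Theorem~\ref{thm_K_n}, the $r_s$-invariants are \emph{trivial} for Brieskorn spheres (see \cite[Section 7.2]{arXiv:2309.02309}). In particular $r_0(B_1,\sigma)=\infty$, so the monotonicity inequality $r_0(Y,\sigma)\le r_0(B_1,\sigma)$ yields no information whatsoever. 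Your parenthetical claim that the cited paper ``prove[s] $r_0(B_1,\tau)<\infty$'' is simply mistaken; that is precisely why Theorem~\ref{thm_Y_m,n} is handled via Heegaard Floer methods rather than instantons. Any instanton proof of $r_0(Y,\sigma)<\infty$ must avoid factoring through a Brieskorn sphere.

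Your strategy for $r_s(-Y,\sigma)=\infty$ is more reasonable in outline, and indeed the paper's proof of Theorem~\ref{thm_K_n} invokes \cite[Theorem 6.3]{arXiv:2309.02309} for the analogous statement. But note a slip in your sketch: the boundary of the $(-1)$-trace of a knot $K$ is $S^3_{-1}(K)$, not $-S^3_{+1}(K)$; to realize $-Y$ you need $S^3_{-1}$ on the \emph{mirror} of $\overline{9}_{46}$ or, equivalently, work with $+1$-surgery and reverse orientation carefully. You also correctly flag that producing a $\sigma$-equivariant slice disk is the crux, and that Proposition~\ref{prop_sakuma} prevents you from recycling the $\tau$-disk; this step is not carried out and would need an explicit equivariant ribbon move for the $\sigma$-axis.
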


\subsection{Proof of Theorems \ref{thm_Z_m,n}, \ref{thm_K_n}, and \ref{thm_Y_1,n}}
\begin{proof}[Proof of Theorem \ref{thm_Z_m,n}]
First, \(Z_{m,n}\) bounds a compact contractible 4-manifold. Indeed, the Kirby diagram obtained from the surgery diagram of \(Z_{m,n}\) by replacing one of the 0-framed components with a dotted circle represents a contractible 4-manifold whose boundary is \(Z_{m,n}\).

Using the operation in Lemma \ref{lem3.5}, we obtain an equivariant negative-definite cobordism \(W_{m,n}\) from \((Z_{m,n+1},\tau)\) to \((Z_{m,n},\tau)\), as shown in Figure \ref{Z_cob}. Similarly, we obtain an equivariant negative-definite cobordism from \((Z_{m+1,n},\tau)\) to \((Z_{m,n},\tau)\).
Since \((Z_{1,1},\tau)\) is equivariantly diffeomorphic to \((S^3_{+1}(\overline{9}_{46}),\tau)\), by Theorem \ref{3.6} and Lemma \ref{lem3.8},
    \[
    r_0(Z_{m,n}, \tau)\leq r_0(Z_{1,1},\tau)<\infty.
    \]
Therefore, for any \(m,n\geq1\), \((Z_{m,n},\tau)\) is a strong cork.

\begin{figure}[h]
\begin{center}
\includegraphics[width=6.3in]{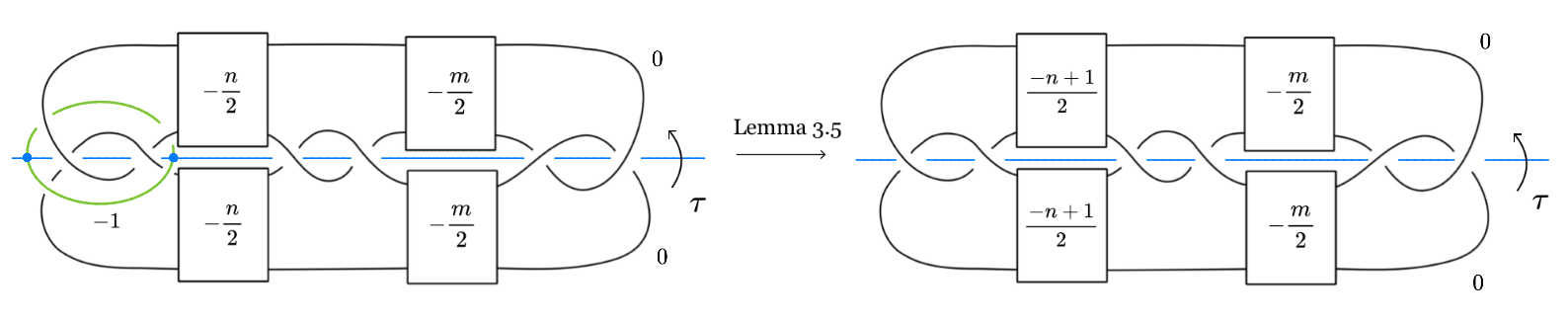}
\caption{The equivariant negative-definite cobordism \(W_{m,n}\) from \((Z_{m,n+1},\tau)\) to \((Z_{m,n},\tau)\) used in the proof of Theorem \ref{thm_Z_m,n}, obtained by attaching an equivariant \((-1)\)-framed 2-handle along the green unknot.}
\label{Z_cob}
\end{center}
\end{figure}

\begin{claim}\label{claim}
    This cobordism \(W_{m,n}\) from \((Z_{m,n+1},\tau)\) to  \((Z_{m,n},\tau)\) is simply connected.
\end{claim}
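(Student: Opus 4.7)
The plan is to verify simple connectedness of $W_{m,n}$ by a direct computation of its fundamental group from the handle decomposition encoded in Figure \ref{Z_cob}. By construction (Lemma \ref{lem3.5} applied equivariantly), the cobordism $W_{m,n}$ is obtained from $Z_{m,n+1}\times I$ by attaching a $\tau$-symmetric pair of 2-handles along explicit curves read off from the figure, so
\[
\pi_1(W_{m,n}) \;=\; \pi_1(Z_{m,n+1})\big/N,
\]
where $N$ is the normal closure of those two attaching curves. Simple connectedness reduces to the statement that these attaching curves normally generate $\pi_1(Z_{m,n+1})$.

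First, I would write down a Wirtinger-style presentation of $\pi_1(Z_{m,n+1})$ from the two-component link surgery diagram in Figure \ref{Z_m,n}: the generators are meridians of the two link components, with relations coming from the link crossings and surgery framings. Next, from Figure \ref{Z_cob}, I would read off each attaching circle of the cobordism's 2-handles as a word in these meridians, tracking it through the isotopies and handle slides supplied by Lemma \ref{lem3.5}. The target outcome is that, after appropriate slides, each attaching circle is visibly isotopic to (a conjugate of) a meridian of a component of the surgery link of $Z_{m,n+1}$; since such meridians normally generate $\pi_1$ of the link complement and hence of $Z_{m,n+1}$, this identification forces $\pi_1(W_{m,n})=1$.

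The main obstacle is the combinatorial bookkeeping, which is elementary but delicate and depends on the specifics of Figure \ref{Z_cob}. A useful structural guide is to work inside the enlarged Kirby diagram of $X_{n+1}\cup W_{m,n}$, where $X_{n+1}$ denotes the 2-handlebody on $D^4$ with boundary $Z_{m,n+1}$: by applying Lemma \ref{lem3.5} across the entire diagram, $X_{n+1}\cup W_{m,n}$ is transformed into the 2-handlebody $X_n$ for $Z_{m,n}$, which is manifestly simply connected (being built from $D^4$ with only 2-handles). Although this closure alone does not immediately imply $\pi_1(W_{m,n})=1$, the associated sequence of Kirby moves provides precisely the slides needed to identify the attaching circles of $W_{m,n}$'s 2-handles with meridians, thereby completing the Wirtinger verification and hence the proof of the claim.
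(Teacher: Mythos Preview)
Your overall framework (compute $\pi_1(Z_{m,n+1})$ via a Wirtinger presentation of the surgery link and then quotient by the word coming from the attaching circle of the cobordism 2-handle) is exactly what the paper does. However, two concrete features of your proposal do not match the actual situation and constitute a real gap.

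First, $W_{m,n}$ is built by attaching a \emph{single} $(-1)$-framed 2-handle along a $\tau$-invariant (strongly invertible) unknot, not a $\tau$-exchanged pair. So only one extra relation is adjoined.

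Second, and more importantly, the attaching circle is \emph{not} isotopic to a meridian, even after slides. In the base case $m=n=1$ the paper writes the Wirtinger generators of the two link components as $x_1,\dots,x_5$ and $y_1,\dots,y_5$; the extra relation coming from the 2-handle is $x_1x_5^{-1}=1$, i.e.\ it identifies two meridians of the \emph{same} component rather than killing any meridian. Your proposed shortcut (``attaching circle $=$ meridian, meridians normally generate, done'') therefore fails. What actually happens is that the single relation $x_1=x_5$, fed back through the Wirtinger relations, forces all $x_i$ to a common element $x$, all $y_j$ to a common $y$, and $xy=yx$; the group is then abelian, and triviality follows because $Z_{m,n+1}$ is an integer homology sphere. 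For general $m,n$ the paper checks inductively that each additional half twist preserves this collapsing phenomenon. Your proposal does not contain this chain of deductions, and without it the argument does not close. The Kirby-picture heuristic via $X_{n+1}\cup W_{m,n}$ does not supply the missing step either, as you yourself note.
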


\begin{figure}[h!]
\begin{center}
\includegraphics[width=3.0in]{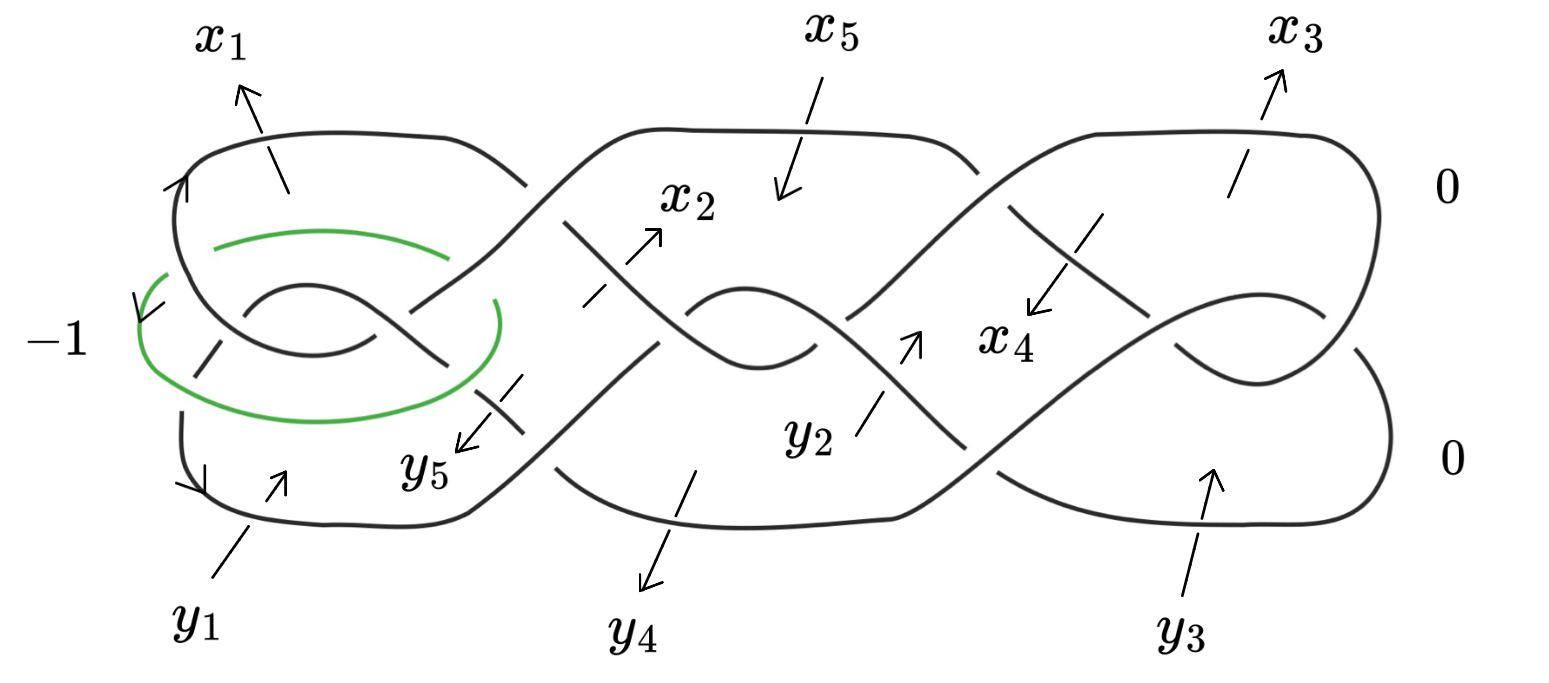}
\caption{The cobordism \(W_{1,1}\) and the Wirtinger presentation of the surgery link for \(Z_{1,2}.\)}
\label{simpconn}
\end{center}
\end{figure}

\begin{proof}
    First, we verify the case \(m, n=1\). The generators of \(\pi_1(W_{1,1})\) are \(x_1,\cdots ,x_5, y_1,\cdots ,y_5\) as indicated in Figure \ref{simpconn}. The relators are given by:
    \begin{align}
    y^{-1}_1 x^{-1}_1 y_5 x_1=1\\
    x^{-1}_1 y^{-1}_5 x_5 y_5=1\\
    x^{-1}_2 x^{-1}_5 x_1 x_5=1\\
    y^{-1}_5 y^{-1}_1 y_4 y_1=1\\
    y^{-1}_2 x^{-1}_1 y_1 x_2=1\\
    x^{-1}_3 y^{-1}_2 x_2 y_2=1\\
    x^{-1}_5 x^{-1}_3 x_4 x_3=1\\
    y^{-1}_3 y^{-1}_4 y_2 y_4=1\\
    y^{-1}_4 x^{-1}_4 y_4 x_3=1\\
    x^{-1}_3 y^{-1}_4 x_3 y_3=1\\
    y_5 x_5 y_2 x^{-2}_3 y^{-1}_4 x_3=1\\
    x_1 x_2 y_4 y^{-2}_3 x^{-1}_3 y_1=1\\
    x_1 x^{-1}_5=1
    \end{align}
    The equations (1)-(10) correspond to the relators for the fundamental group of the complement of the surgery link for \(Z_{1,2}\). The equations (11) and (12) correspond to the 0-framed longitudes of the components of the diagram of \(Z_{1,2}\), respectively. The equation (13) corresponds to the green attaching sphere of the 2-handle of \(W_{1,1}\).
    
    From (13), we have \(x_1=x_5\), and let us denote this element by \(x\). If we set \(y=y_5\), then (2) implies \(xy=yx\). Thus, from (1), we have \(y_1=y\). By (3), \(x_2=x\), and by (4), \(y_4=y\). By (5), \(y_2=y\), so by (6), \(x_3=x\). By (7), \(x_4=x\), and by (8), \(y_3=y\). In summary, from (1)-(8) and (13), we obtain
    \begin{align}    
    x=x_1=x_2=x_3=x_4=x_5,\ y=y_1=y_2=y_3=y_4=y_5,\ xy=yx .
    \end{align}
    Substituting these into (11) and (12), we obtain
    \[
    x=y=1.
    \]
    Therefore, \(W_{1,1}\) is simply connected.
    
    Note that since \(Z_{1,2}\) is an integer homology 3-sphere, its first homology group is trivial. Since the first homology group is the abelianization of the fundamental group, the relations in (14) imply \(x=y=1\) immediately, without the need to substitute into (11) and (12).

    Even when additional half twists are added, we can compute inductively in a similar way. Consider the case where a half twist is added as shown in Figure \ref{halftwists}. For the left case, if we assume \(x=x_{k+1}=x_l\), then the relator \(x^{-1}_{k+1} x^{-1}_{l} x_k x_l=1\) implies \(x=x_k\). Similarly, for the right case, assuming \(x=x_{k}=x_l\), the relator \(x^{-1}_{k+1} x^{-1}_l x_k x_l=1\) implies \(x=x_{k+1}\). Thus, it is shown inductively that \(W_{m,n}\) is simply connected for any \(m\) and \(n\).
\end{proof}

\begin{figure}[h!]
\begin{center}
\includegraphics[width=2.7in]{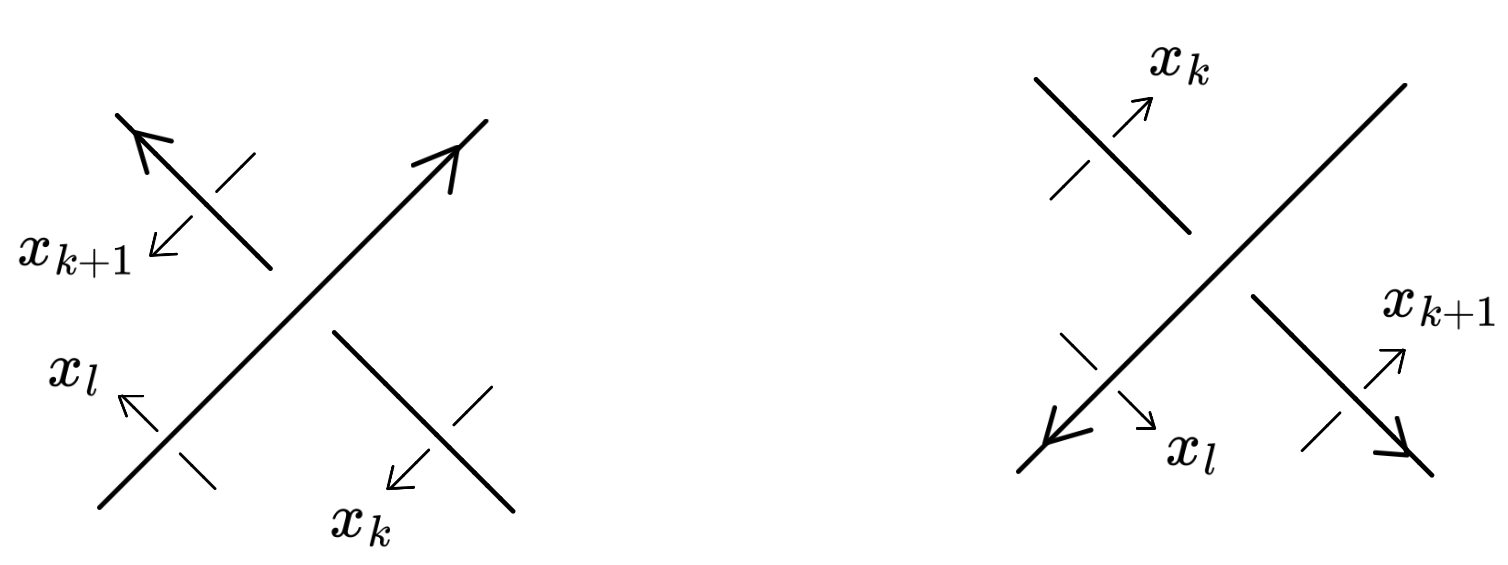}
\caption{}
\label{halftwists}
\end{center}
\end{figure}
    
    By Claim \ref{claim}, we have \(r_0(Z_{m,{n+1}},\tau)<r_0(Z_{m,n},\tau)\). Considering \(W_{m,n}\) to be the cobordism from \((-Z_{m,{n}},\tau)\) to  \((-Z_{m,{n+1}},\tau)\), we obtain
    \[
    \infty =r_0(-Z_{1,1},\tau)\leq r_0(-Z_{m,1},\tau)\leq r_0(-Z_{m,2},\tau)\leq\cdots\leq r_0(-Z_{m,n},\tau)\leq\cdots.
    \]
    Thus, \(r_0(-Z_{m,n},\tau)=\infty\). Therefore, any nontrivial linear combination of elements in \(\{(Z_{m,n},\tau)\}_{n\in\N}\) is a strong cork by Theorem \ref{thm3.7}. 

    Similarly, the cobordism from \((Z_{m+1,n},\tau)\) to \((Z_{m,n},\tau)\) is also simply connected, so any nontrivial linear combination of elements in \(\{(Z_{m,n},\tau)\}_{m\in\N}\) is a strong cork.
\end{proof}

\begin{remark}
   When \(m\) and \(n\) are odd, one can also prove that
   \[
   h_{\tau}(Z_{m,n})\leq h_{\tau}(Z_{1,1})<0
   \]
   and \((Z_{m,n},\tau)\) is strong by considering the  interchanging \((-1,-1)\)-cobordism shown in Figure \ref{C_cob2}.
   However, since the cobordism used in the proof of Theorem \ref{thm_Z_m,n} is spin\(^c\)-conjugating, we cannot obtain the inequality for \(h_{\tau}\). Thus, Theorem \ref{thm_Z_m,n} cannot be immediately shown using Heegaard Floer theory when \(m\) and \(n\) are even.
\end{remark}

\begin{figure}[h!]
\begin{center}
\includegraphics[width=6.0in]{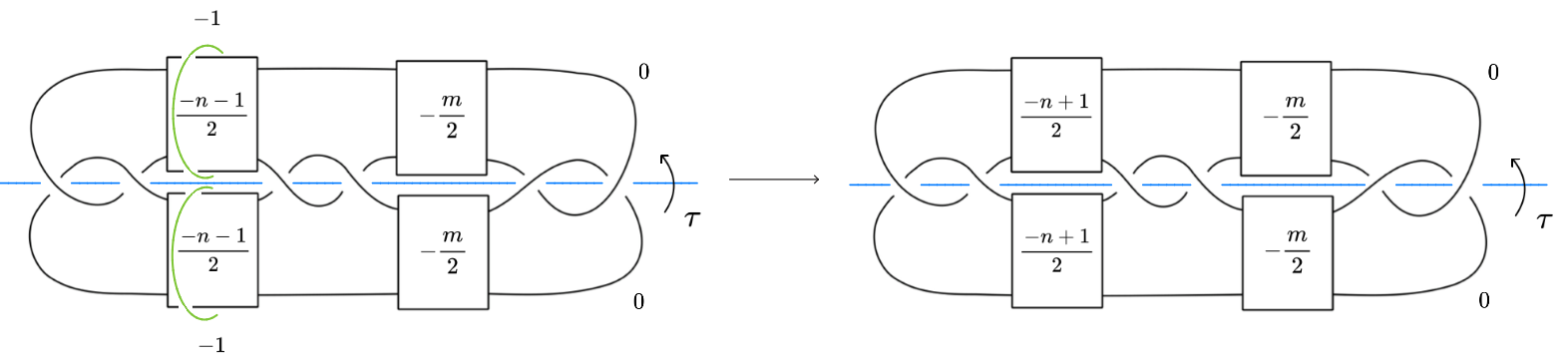}
\caption{The interchanging \((-1,-1)\)-cobordism from \((Z_{m,n+2},\tau)\) to \((Z_{m,n},\tau)\) obtained by attaching an equivariant pair of \((-1)\)-framed 2-handles along the green unknots.}
\label{C_cob2}
\end{center}
\end{figure}

\begin{remark}\label{rem}
    \((Z_{m,n}\# (-Z_{m,n}),\tau)\) is not a strong cork, but \(\tau\) does not extend over any contractible 4-manifold bounded by \(Z_{m,n}\# (-Z_{m,n})\). If \(\tau\) were to extend over some contractible 4-manifold \(W\) that \(Z_{m,n}\# (-Z_{m,n})\) bounds, then we would obtain a simply-connected, equivariant definite cobordism \(W'\) from \((Z_{m,n},\tau)\) to itself. As in the proof of Theorem \ref{thm_Z_m,n}, \(r_0(Z_{m,n},\tau)<\infty\). By Theorem \ref{3.6}, \(r_0(Z_{m,n},\tau)<r_0(Z_{m,n},\tau)\), which is a contradiction.
\end{remark}

\begin{proof}[Proof of Theorem \ref{thm_K_n}]
    It suffices to prove that \(\{(S^3_{1/m}(K_n),\tau)\}_{m\in\N}\) satisfies the assumptions of Theorem \ref{thm3.7}. 
    \begin{enumerate}
        \item The cobordism from \((S^3_{1/{(m+1)}}(K_n),\tau)\) to \((S^3_{1/m}(K_n),\tau)\) displayed in Figure \ref{K_n_cob} is simply connected, equivariant negative-definite, which can be proved similarly to \cite[Theorem 5.12]{Nozaki2019FilteredIF}.
        \item Since \((S^3_{+1}(K_n),\tau)\) is equivariantly diffeomorphic to \((Z_{1,n},\tau)\), by Theorem \ref{thm_Z_m,n},
        \[
        r_0(S^3_{+1}(K_n),\tau)=r_0(Z_{1,n}, \tau)<\infty.
        \]
        \item By \cite[Theorem 6.3]{arXiv:2309.02309}, \(r_0(-S^3_{1/m}(K_n),\tau)=\infty\) for any \(m\geq 1\).
    \end{enumerate}
    The same holds for the involution \(\sigma\).
\end{proof}

\begin{figure}[h!]
\begin{center}
\includegraphics[width=5.0in]{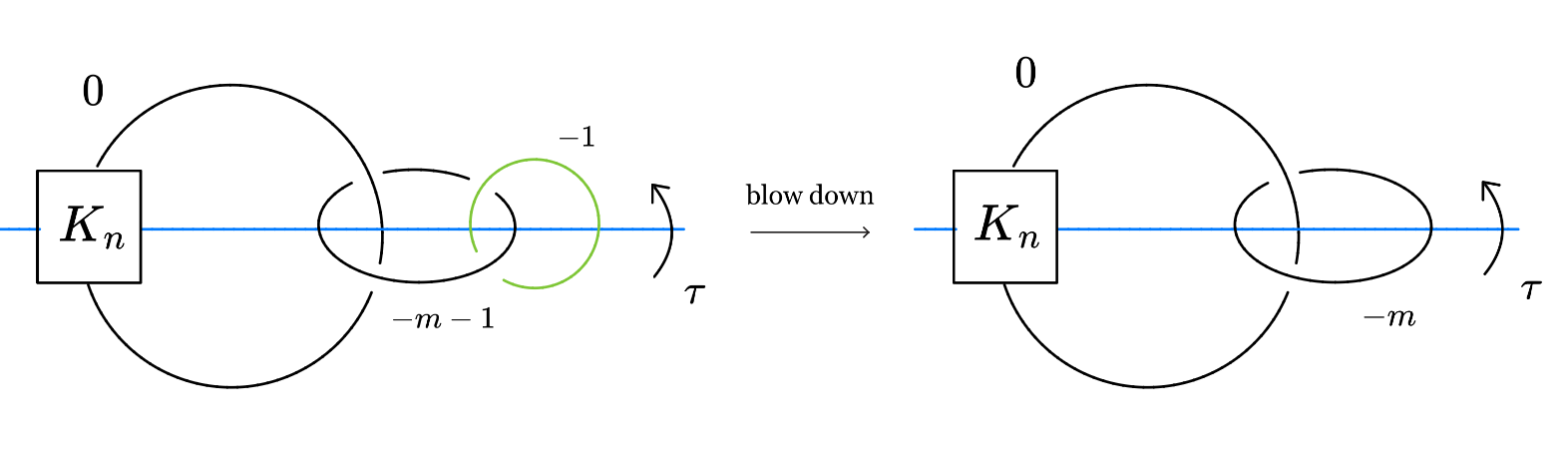}
\caption{The simply-connected, equivariant negative-definite cobordism from \((S^3_{1/{(m+1)}}(K_n),\tau)\) to \((S^3_{1/m}(K_n),\tau)\) used in the proof of Theorem \ref{thm_K_n}, obtained by attaching an equivariant \((-1)\)-framed 2-handle along the green unknot.}
\label{K_n_cob}
\end{center}
\end{figure}

\begin{remark}
    Theorem \ref{thm_Y_m,n} cannot immediately be proved by instanton theory using the cobordism in the proof of Theorem \ref{thm_Y_m,n} because the \(r_s\)-invariants are trivial for Brieskorn spheres, as discussed in \cite[Section 7.2]{arXiv:2309.02309}.
\end{remark}

\begin{proof}[Proof of Theorem \ref{thm_Y_1,n}]
    Since \((Y_{1,1},\sigma)\) is equivariantly diffeomorphic to \((S^3_{+1}(\overline{9}_{46}),\sigma)\), Lemma \ref{lem3.8} implies that
    \[
    r_0(Y_{1,1},\sigma)=r_0(S^3_{+1}(\overline{9}_{46}),\sigma)<\infty.
    \]
    Similarly to the proof of Theorem \ref{thm_Z_m,n}, there is a simply-connected, equivariant negative-definite cobordism from \((Y_{1,n+1},\sigma)\) to \((Y_{1,n},\sigma)\). The result follows from Theorem \ref{thm3.7}.
\end{proof}

\section{Appendix}\label{appendix}
In this appendix, we prove Proposition \ref{prop_sakuma}, which states that the knot \(K_n\) has two distinct strong involutions \(\tau\) and \(\sigma\) displayed in Figure \ref{K_n}.
\begin{lemma}\label{A1}
    For each \(n\in\N\), the knot displayed on the left side of Figure \ref{K_n} is isotopic to the one displayed on the right. 
\end{lemma}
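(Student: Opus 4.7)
The plan is to exhibit an explicit diagrammatic isotopy, parameterized by $n$, transforming the left-hand presentation of $K_n$ into the right-hand presentation of Figure \ref{K_n}. Both pictures are drawn so that the relevant axis of the corresponding strong involution is clearly visible, and this forces the two diagrams to look substantially different even when they represent the same underlying knot. The content of the lemma is precisely that, after forgetting the involutions, the two diagrams coincide as knot types.

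The first step is to verify the base case $n=1$ by hand. In that case both diagrams should reduce, via a short sequence of Reidemeister moves and planar isotopies, to the standard pretzel presentation $P(-3,3,-3) = \overline{9}_{46}$; this is essentially a finite check and fixes the ``core'' of the argument.

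For general $n$, the strategy is to identify a common core subdiagram (independent of $n$) together with a twist region whose twist count depends on $n$. The isotopy then breaks into two pieces: first, put both diagrams into a normal form where the $n$-dependent twist boxes appear in a prescribed location; second, argue that these twist boxes can be slid through the core by Reidemeister II/III moves. Concretely, I would start from the left diagram, perform an isotopy that flips one of the twist regions around an arc of the diagram (using a sequence of R\hspace{-0.05em}II moves followed by a planar rotation in $S^3$), and verify that the resulting picture, after a final planar isotopy, agrees with the right diagram. An induction on $n$ is natural here: the step from $n$ to $n+1$ amounts to adding one extra half twist on each side, which can be carried through the isotopy without further modification.

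The main obstacle will be bookkeeping the crossings during the slide of the twist region through the rest of the diagram: naively moving a strand across a twist box introduces extra crossings that must be canceled by additional R-moves, and one has to check that this cancellation really does occur uniformly in $n$. I expect this is best handled by exhibiting the isotopy for small $n$ in full detail (as a sequence of figures) and then verifying that, because the manipulation only interacts with the twist region through a neighborhood of a single arc entering and exiting it, the argument extends to all $n \geq 1$ without modification.
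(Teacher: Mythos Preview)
Your plan is correct and is essentially the same approach as the paper's: the paper's entire proof is ``See Figure~\ref{isotopy},'' i.e.\ a single sequence of diagrams exhibiting the isotopy for general $n$, with the $n$-dependent twist region carried along as a schematic box. The only difference is packaging: rather than inducting on $n$, the paper draws the twist box abstractly so that one picture-sequence handles all $n$ at once, which is exactly what your final paragraph anticipates.
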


\begin{proof}
    See Figure \ref{isotopy}.
\end{proof}

It remains to prove that the involutions \(\tau\) and \(\sigma\) are distinct. We prove that \((K_n,\tau)\) and \((K_n,\sigma)\) are not Sakuma equivalent by computing their Sakuma \(\eta\)-polynomials introduced in \cite{1984OnSI}.

We review the definition of the \(\eta\)-polynomial of a strongly invertible knot \((K,\tau)\). 
Let \(l\) be a preferred longitude of \(K\) such that \(\tau (l)\cap l=\emptyset\). Set \(O=p(\textrm{Fix}(\tau))\) and \(L=p(l)\), where \(p:S^3\rightarrow S^3/\tau\cong S^3\) is a projection. Then \(L(K,\tau)=O\cup L\) is a two-component link in \(S^3\) and \(\textrm{lk}(O,L)=0\). The \(\eta\)-polynomial of \((K,\tau)\) is defined as the Laurent polynomial
\[
  \eta_{(K,\tau)}(t)=\sum_{i=-\infty}^{\infty} \textrm{lk}(\tilde{L'},t^i \tilde{L})t^i\in\Z[t^{\pm 1}],
\]
where \(\tilde{L}\) is the lift of \(L\) to the infinite cyclic cover \(\widetilde{E(O)}\) of \(E(O)=S^3\setminus O\), \(\tilde{L'}\) is the lift of a preferred longitude \(L'\) of \(L\) near \(\tilde{L}\), and \(t\) is a generator of the covering transformation group of \(\widetilde{E(O)}\). The \(\eta\)-polynomial \(\eta_{(K,\tau)}(t)\) is an invariant of the Sakuma equivalence class of \((K,\tau)\).

\begin{lemma}
    For each \(n\in\N\), \(\eta_{(K_n,\tau)}(t)\neq\eta_{(K_n, \sigma)}(t)\).
\end{lemma}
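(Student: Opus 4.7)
The plan is to compute both $\eta$-polynomials directly from the symmetric diagrams in Figure~\ref{K_n} and exhibit that they differ. For each involution $\iota \in \{\tau,\sigma\}$, I would first draw the quotient link $L(K_n,\iota) = O_\iota \cup L_\iota$ in $S^3/\iota \cong S^3$, using the symmetric diagram of $K_n$ whose rotation axis is the one displayed in Figure~\ref{K_n}. The axis descends to an unknotted component $O_\iota$; half of the symmetric diagram of $K_n$ becomes a tangle with endpoints on $O_\iota$, closing through $O_\iota$ into a knot $L_\iota$. Because $(K_n,\iota)$ is strongly invertible, the preferred longitude $l$ of $K_n$ may be chosen with $\iota(l)\cap l = \emptyset$, which forces $\mathrm{lk}(O_\iota, L_\iota)=0$ in the quotient.

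Next I would compute $\eta_{(K_n,\iota)}(t)$ from each quotient link. Since $O_\iota$ is unknotted, the infinite cyclic cover $\widetilde{E(O_\iota)}$ has a standard description: fix a spanning disk $\Delta$ of $O_\iota$, cut $E(O_\iota)$ along $\Delta$ and stack $\mathbb{Z}$ copies, obtaining the cover, with deck transformation $t$ shifting by one level. The lift $\widetilde{L_\iota}$ is produced by cutting $L_\iota$ at its intersections with $\Delta$ and reassembling the strands in successive levels according to sign. The coefficients $\mathrm{lk}(\widetilde{L'_\iota}, t^i \widetilde{L_\iota})$ for each $i$ are then read off from crossings between strands of $L_\iota$ at different heights relative to $\Delta$; in particular, the $n$-dependent twist box of $K_n$ contributes a controlled $n$-dependent term in each case, so the output is a closed-form Laurent polynomial in $t$ whose coefficients are explicit functions of $n$.

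The main obstacle is the bookkeeping: I must make sure the quotient diagrams are drawn consistently as $n$ varies, that the preferred longitude chosen in $S^3$ descends to a preferred longitude $L'$ of $L_\iota$ (adjusting framings by an appropriate writhe correction in the quotient), and that the lift to $\widetilde{E(O_\iota)}$ is tracked level-by-level through the twist region. I expect the two involutions to act on the twist box in genuinely different ways: one fixes the box setwise and rotates it about a horizontal axis through its center, while the other exchanges its top and bottom, leading to qualitatively different distributions of crossings between strands at different $\Delta$-heights.

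Once the explicit formulas are in hand, the conclusion is immediate. For $n=1$, the case $K_1 = \overline{9}_{46}$ is already handled by Sakuma's original computation in \cite{1984OnSI}, which distinguishes the two strong involutions. For general $n$, I expect the closed-form expressions for $\eta_{(K_n,\tau)}(t)$ and $\eta_{(K_n,\sigma)}(t)$ to differ either in breadth or in specific coefficients (for instance, in the coefficient of $t^{\pm 1}$ or in the constant term), and comparing them gives $\eta_{(K_n,\tau)}(t)\neq \eta_{(K_n,\sigma)}(t)$ for every $n\in \N$, which by the invariance of $\eta$ under Sakuma equivalence completes the proof.
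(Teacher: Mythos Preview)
Your strategy is exactly the paper's: compute both $\eta$-polynomials via Sakuma's algorithm and compare. But what you have written is a plan, not a proof. The entire substance of this lemma \emph{is} the computation, and you have not carried any of it out; saying you ``expect the closed-form expressions \ldots\ to differ either in breadth or in specific coefficients'' is not a proof until those expressions are actually produced and inspected. The paper does this in full, using Sakuma's pseudo-fundamental-region algorithm from \cite{1984OnSI}, and the answer requires a case split on the parity of $n$ (which you did not anticipate): the twist box contributes differently to the quotient depending on whether the number of half-twists is even or odd, so there are four separate diagrammatic computations. For odd $n$ the paper obtains
\[
\eta_{(K_n,\tau)} = \Bigl[-3n+3,\ -1,\ 2n-2,\ 1,\ -\tfrac{n-1}{2}\Bigr],\qquad
\eta_{(K_n,\sigma)} = \Bigl[5n-11,\ -2n+5,\ \tfrac{-5n+9}{2},\ 3n-6,\ \tfrac{-n+3}{2},\ -n+1,\ \tfrac{n-1}{2}\Bigr],
\]
and for even $n$,
\[
\eta_{(K_n,\tau)} = \Bigl[-5n+6,\ 2n-3,\ \tfrac{5n}{2}-2,\ -3n+3,\ \tfrac{n}{2}-1,\ n,\ -\tfrac{n}{2}\Bigr],\qquad
\eta_{(K_n,\sigma)} = \Bigl[3n-12,\ 1,\ -2n+8,\ -1,\ \tfrac{n-4}{2}\Bigr],
\]
where $[a_0,a_1,\ldots,a_k$ abbreviates $a_0 + \sum_{j\geq 1} a_j(t^{-j}+t^j)$. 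In every case the two polynomials visibly differ (for $n\geq 2$ already by breadth; for $n=1$ by direct inspection). To turn your outline into a proof you must actually draw the four pseudo-fundamental regions, assign Sakuma's indices to the arcs, read off $\tilde\eta$, and reduce to $\eta$; none of that bookkeeping can be waved away.
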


\begin{figure}[h!]
\begin{center}
\includegraphics[width=3.8in]{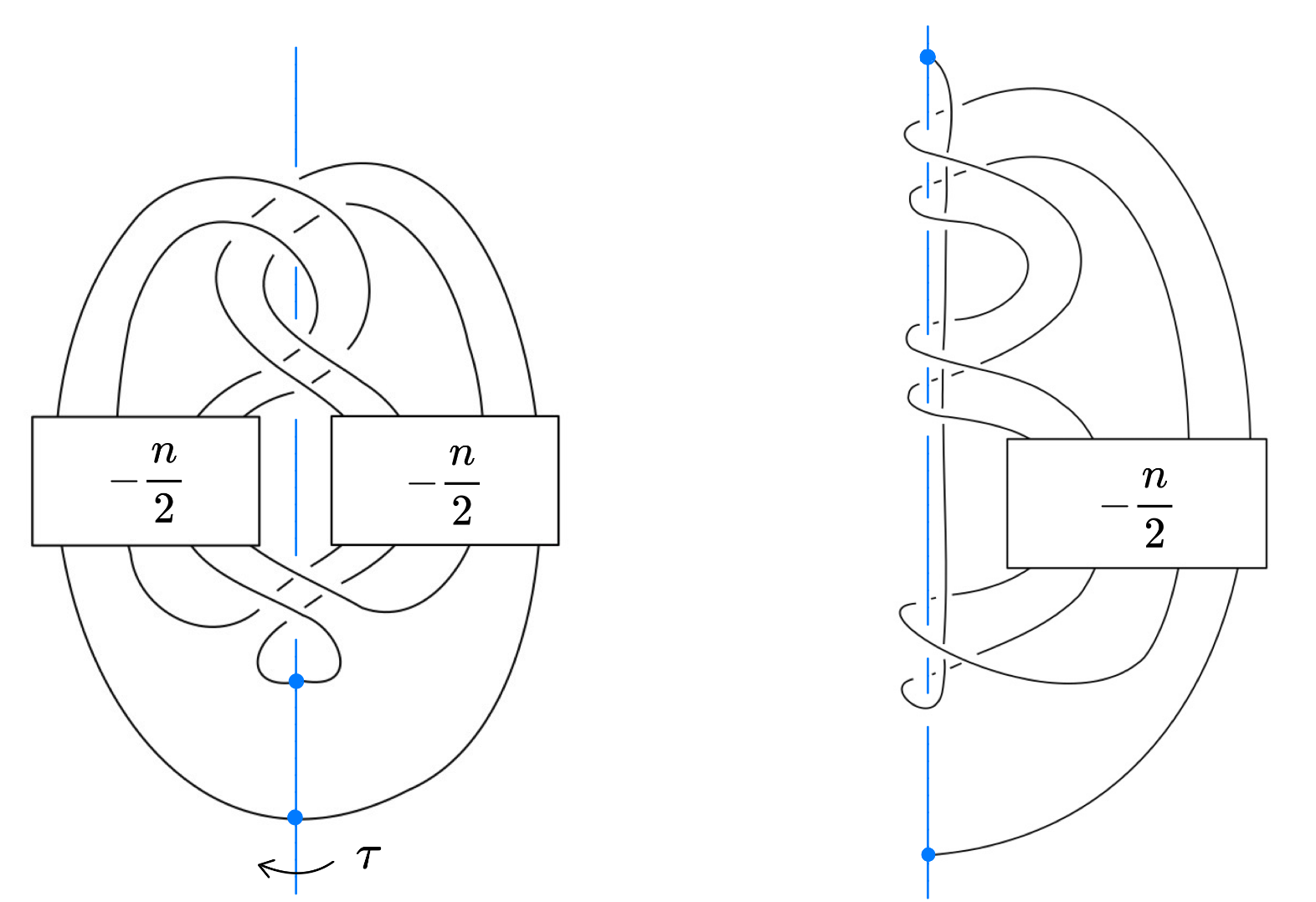}
\caption{Left: \((K_n,\tau)\). Right: The intermediate step to obtain the pseudo-fundamental region.}
\label{tau_theta}
\end{center}
\end{figure}

\begin{figure}[h!]
\begin{center}
\includegraphics[width=4.7in]{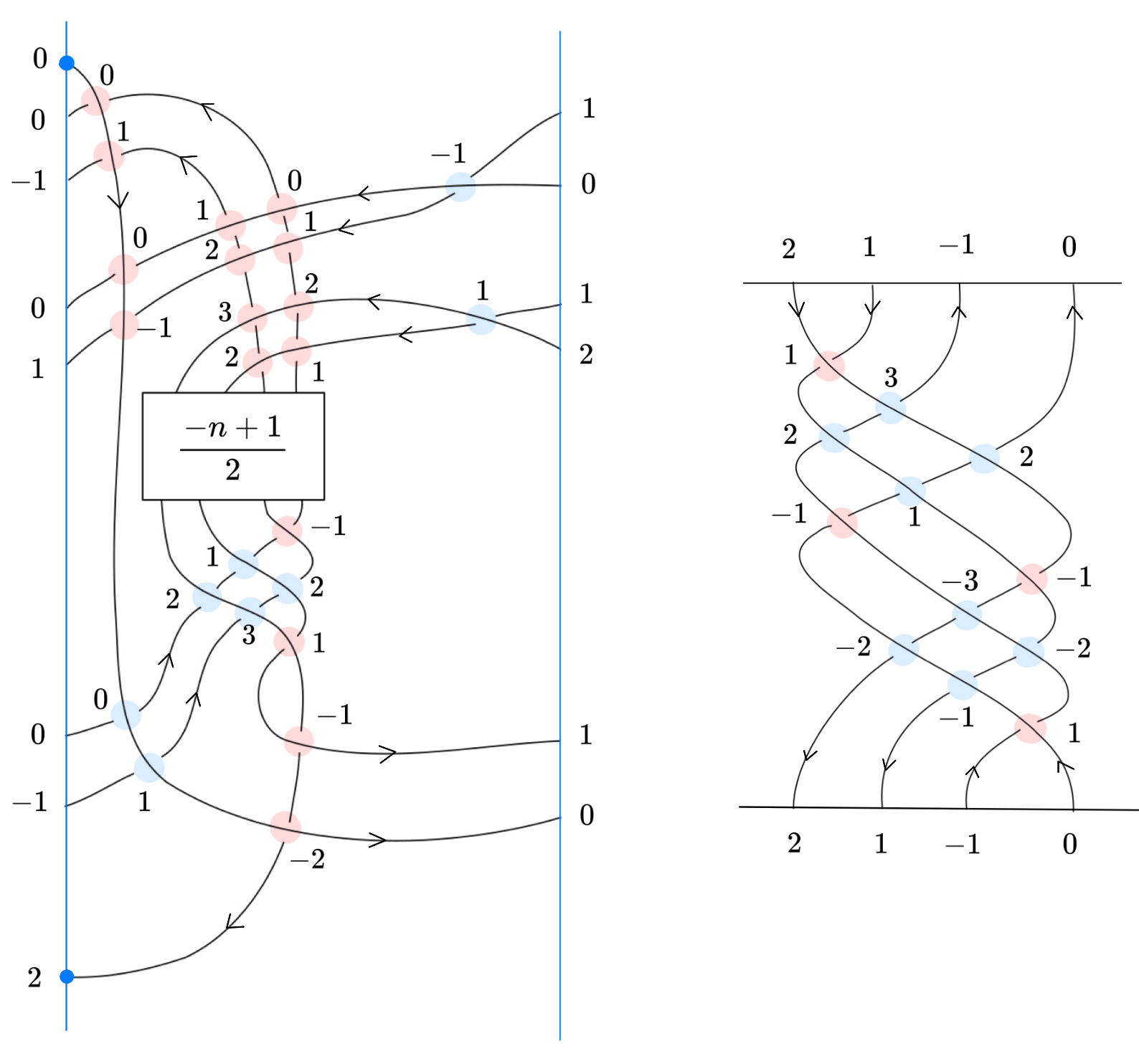}
\caption{Left: The pseudo-fundamental region of \((K_n,\tau)\) where \(n\) is odd. Indices are assigned to the arcs according to Sakuma's algorithm. Starting with 0, the index is increased by 1 if the arc starts from the right, and decreased by 1 if it starts from the left.
At each crossing, red and blue circles indicate positive and negative signs, respectively, and the number represents the index of the over-arcs minus the index of the under-arcs. Right: A detail of one of the full twists in the box on the left.}
\label{tau_odd}
\end{center}
\end{figure}

\begin{proof}
    We use the algorithm described in \cite[Section 2]{1984OnSI}. First, we compute the Sakuma \(\eta\)-polynomial \(\eta_{(K_n,\tau)}(t)\) of \((K_n,\tau)\). 
    
    \underline{The case where \(n\) is odd}: Via the process in Figure \ref{tau_theta}, we obtain the pseudo-fundamental region of \((K_n,\tau)\) in Figure \ref{tau_odd}, which gives the calculation of the \(\eta\)-polynomial.    
    \[
    \tilde{\eta}_{(K_n,\tau)}
    =-\dfrac{n-1}{2}(x_{-3}+x_3)+(-n+2)(x_{-2}+x_2)
    +\dfrac{n+3}{2}(x_{-1}+x_1)+2x_0.
    \]
    Putting \(x_i=t^{i-1}-2t^i+t^{i+1}\),
    \[
    \eta'_{(K_n,\tau)}
    =(n+1)+(-2n-1)(t^{-1}+t)+(2n-2)(t^{-2}+t^2)+(t^{-3}+t^3)-\dfrac{n-1}{2}(t^{-4}+t^4).
    \]
    If \(\eta'=[a_0,a_1,a_2,a_3,\cdots\), then,
    \[
    \eta=[-2\Sigma_{j\geq 1}a_{2j},\ -\Sigma_{j\geq 1}a_{2j+1},\ a_2,a_3,\cdots.
    \]
    Here, \([a_0,a_1,a_2,\cdots, a_n\) represents the polynomial \(a_0+a_1(t^{-1}+t)+a_2(t^{-2}+t^2)+\cdots +a_n(t^{-n}+t^n)\). Hence, the \(\eta\)-polynomial of \((K_n,\tau)\) is
    \[
    \eta_{(K_n,\tau)}=[-3n+3,\ -1,\ 2n-2,\ 1,\ -\dfrac{n-1}{2}.
    \]
    
     \underline{The case where \(n\) is even}: Figure \ref{tau_even} shows the pseudo-fundamental region of \((K_n,\tau)\) where \(n\) is even.
    \[
    \tilde{\eta}_{(K_n,\tau)}=-\dfrac{n}{2}(x_{-5}+x_5)+(n-1)(x_{-3}+x_3)+(-n+1)(x_{-2}+x_2)+\left(-\dfrac{n}{2}+1\right)(x_{-1}+x_1).
    \]
 Putting \(x_i=t^{i-1}-2t^i+t^{i+1}\),
    \[
    \eta'_{(K_n,\tau)}
    =(-n+2)-(t^{-1}+t^1)+\left(\dfrac{5}{2}n-2\right)(t^{-2}+t^2)
    +(-3n+3)(t^{-3}+t^3)+\left(\dfrac{n}{2}-1\right)(t^{-4}+t^4)+n(t^{-5}+t^5)-\dfrac{n}{2}(t^{-6}+t^6).
    \]
    Hence,
    \[
    \eta_{(K_n,\tau)}=[-5n+6,\ 2n-3,\ \dfrac{5}{2}n-2,\ -3n+3,\ \dfrac{n}{2}-1,\ n,\ -\dfrac{n}{2}.
    \]
    
\begin{figure}[h!]
\begin{center}
\includegraphics[width=4.8in]{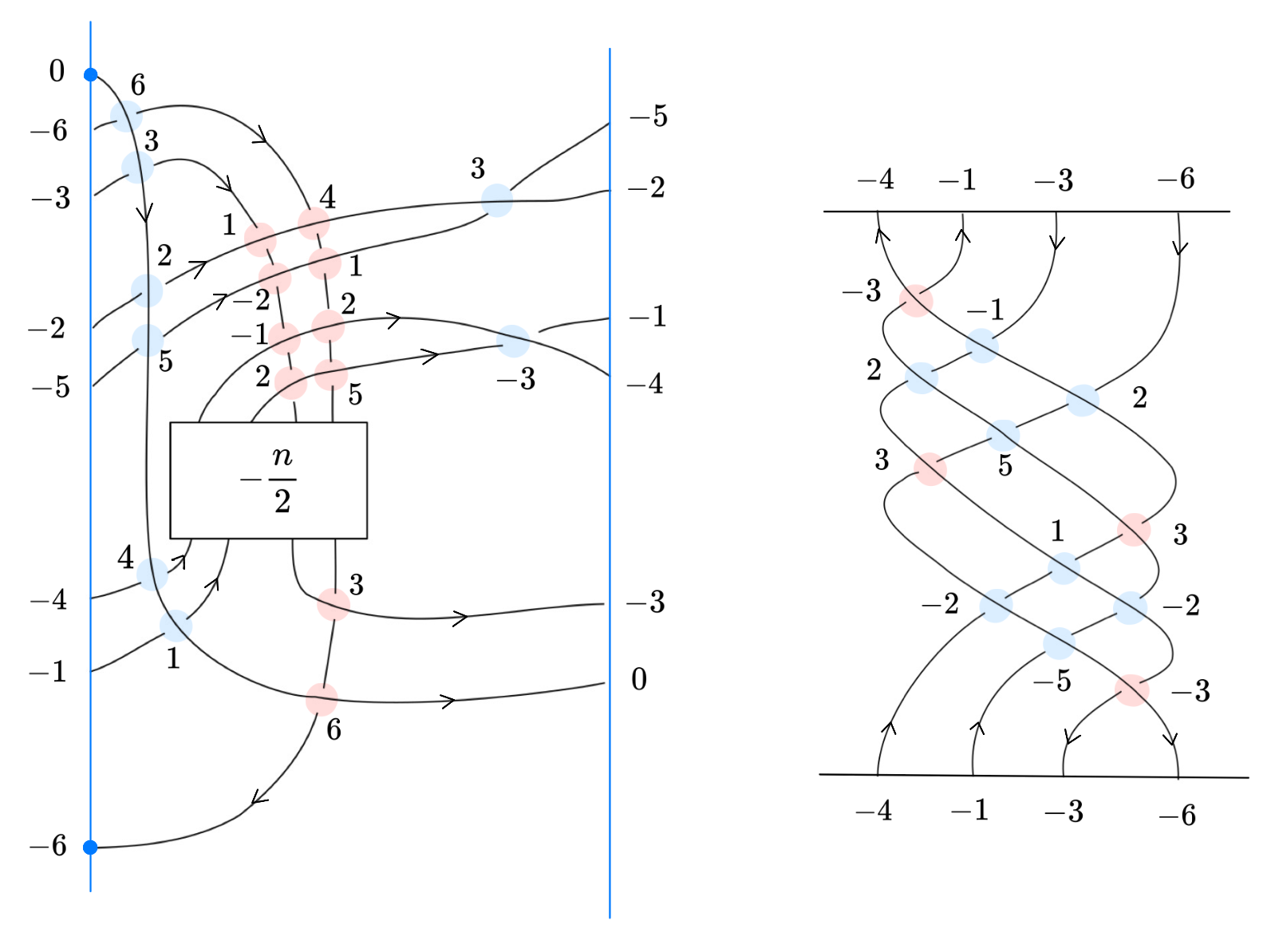}
\caption{Left: The pseudo-fundamental region of \((K_n,\tau)\) where \(n\) is even. Right: A detail of one of the full twists in the box on the left.}
\label{tau_even}
\end{center}
\end{figure}

    Next, we compute the \(\eta\)-polynomial \(\eta_{(K_n,\sigma)}(t)\) of \((K_n,\sigma)\). 
    
    \underline{The case where \(n\) is odd}: Via the process in Figure \ref{sigma_theta}, we obtain the pseudo-fundamental region of \((K_n,\sigma)\) in Figure \ref{sigma_odd}.    
    \[
    \tilde{\eta}_{(K_n,\sigma)}
    =\dfrac{n-1}{2}(x_{-5}+x_5)+(-n+2)(x_{-3}+x_3)+(n-2)(x_{-2}+x_2)+\dfrac{n-3}{2}(x_{-1}+x_1).
    \]
    Putting \(x_i=t^{i-1}-2t^i+t^{i+1}\),
    \[
    \eta'_{(K_n,\sigma)}
    =(n-3)+(t^{-1}+t)+\dfrac{-5n+9}{2}(t^{-2}+t^2)+(3n-6)(t^{-3}+t^3)+\dfrac{-n+3}{2}(t^{-4}+t^4)+(-n+1)(t^{-5}+t^5)+\dfrac{n-1}{2}(t^{-6}+t^6).
    \]
    Hence,
    \[
    \eta_{(K_n,\sigma)}
    =[5n-11,\ -2n+5,\ \dfrac{-5n+9}{2},\ 3n-6,\ \dfrac{-n+3}{2},\ -n+1,\ \dfrac{n-1}{2}.
    \]

\begin{figure}[h!]
\begin{center}
\includegraphics[width=4.0in]{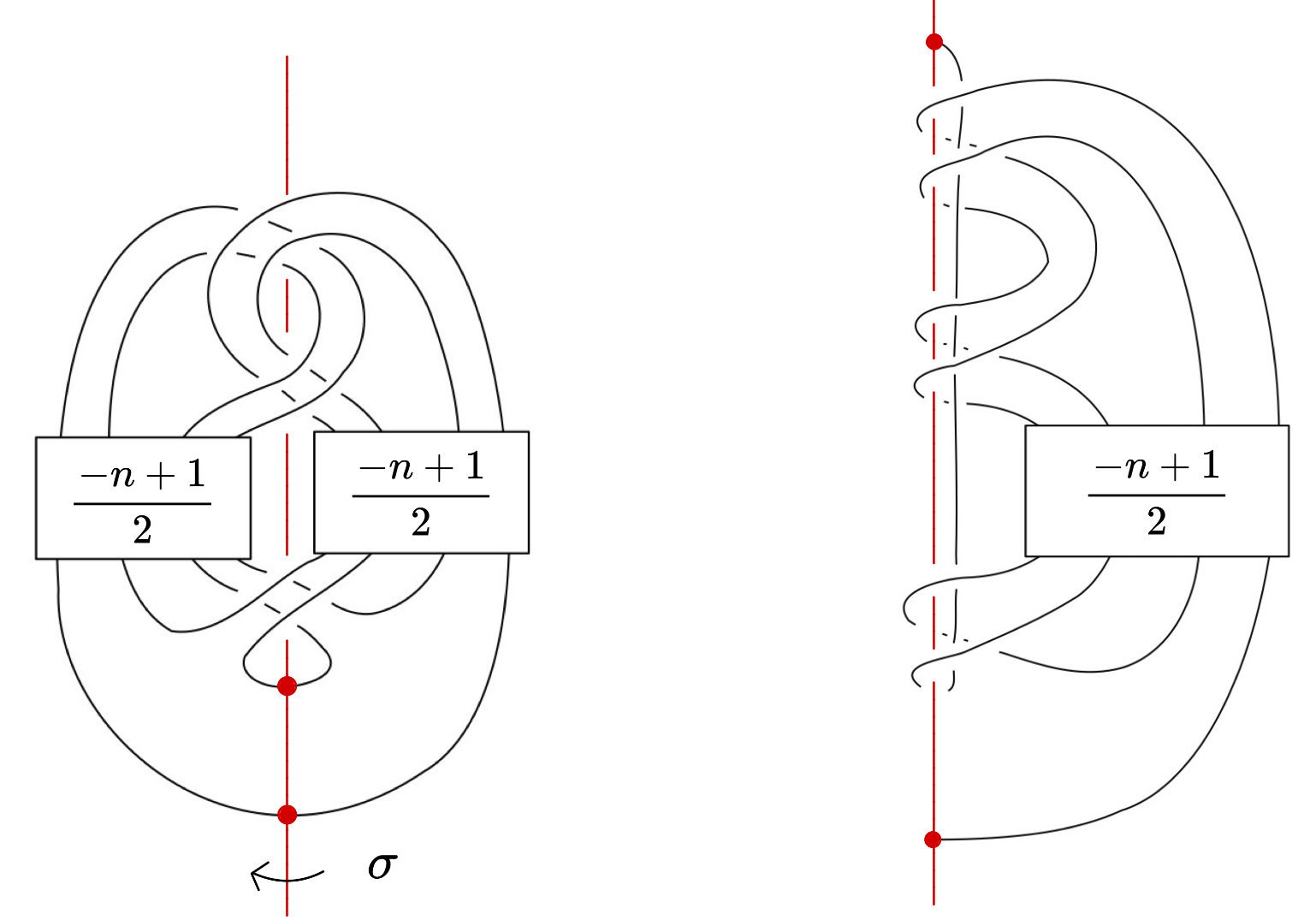}
\caption{Left: \((K_n,\sigma)\). Right: The intermediate step to obtain the pseudo-fundamental region.}
\label{sigma_theta}
\end{center}
\end{figure}

\begin{figure}[h!]
\begin{center}
\includegraphics[width=4.8in]{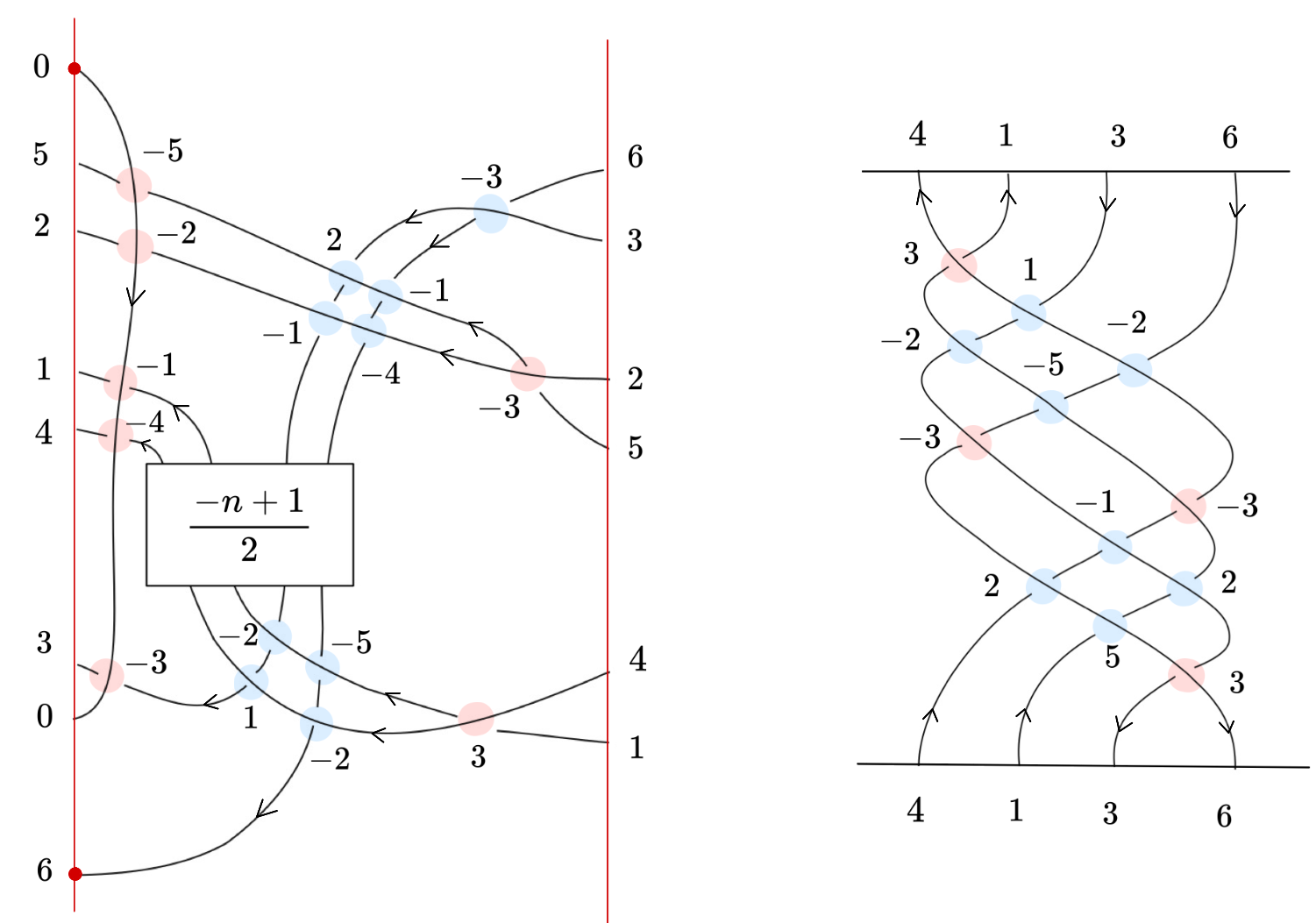}
\caption{Left: The pseudo-fundamental region of \((K_n,\sigma)\) where \(n\) is odd. Right: A detail of one of the full twists in the box on the left.}
\label{sigma_odd}
\end{center}
\end{figure}
 
    \underline{The case where \(n\) is even}: Figure \ref{sigma_even} shows the pseudo-fundamental region of \((K_n,\sigma)\) where \(n\) is even.
    
    \[
     \tilde{\eta}_{(K_n,\sigma)}
    =\dfrac{n-4}{2}(x_{-3}+x_3)+(n-5)(x_{-2}+x_2)-\dfrac{n}{2}(x_{-1}+x_1)-2x_0.
    \]
    Putting \(x_i=t^{i-1}-2t^i+t^{i+1}\),
    \[
    \eta'_{(K_n,\sigma)}
    =\dfrac{n-4}{2}(t^{-4}+t^4)-(t^{-3}+t^3)+(-2n+8)(t^{-2}+t^2)+(2n-7)(t^{-1}+t)+(-n+4).
    \]
    Hence,
    \[
        \eta_{(K_n,\sigma)}
        =[3n-12,\ 1,\ -2n+8,\ -1,\ \dfrac{n-4}{2}.
    \]
\end{proof}

\begin{figure}[t]
\begin{center}
\includegraphics[width=4.8in]{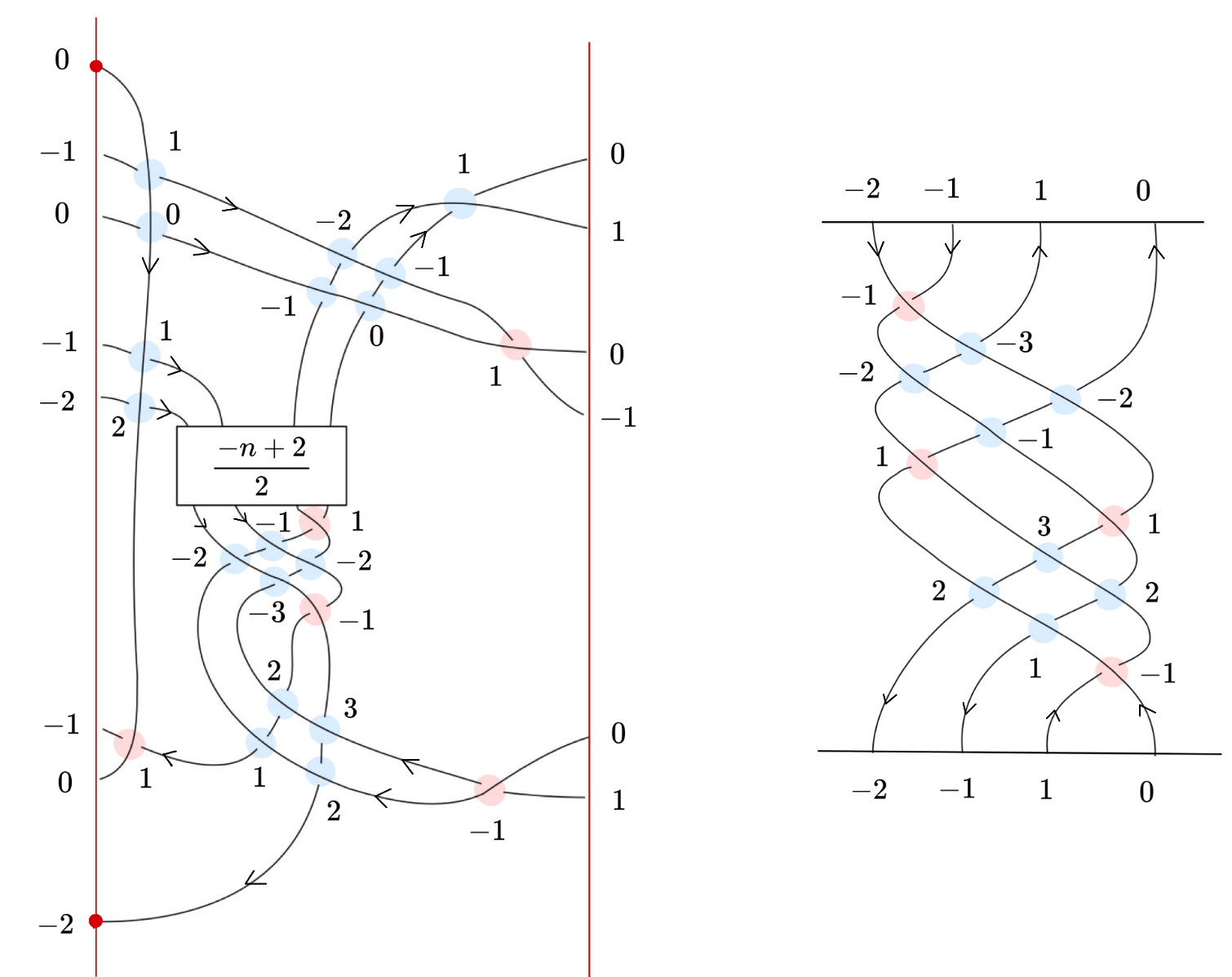}
\caption{Left: The pseudo-fundamental region of \((K_n,\sigma)\) where \(n\) is even. Right: A detail of one of the full twists in the box on the left.}
\label{sigma_even}
\end{center}
\end{figure}

\begin{figure}[h!]
\begin{center}
\includegraphics[width=6.0in]{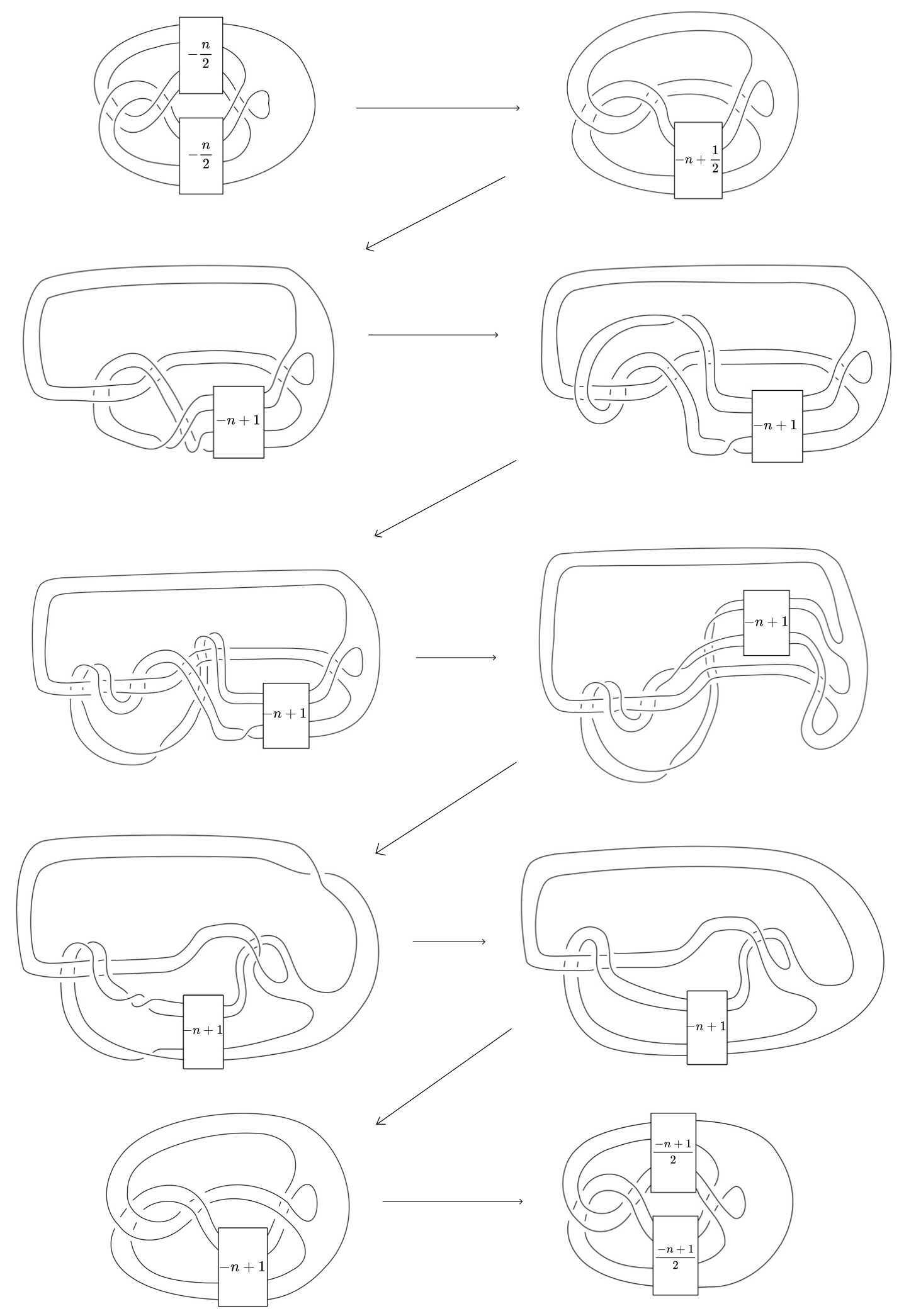}
\caption{The isotopy of \(K_n\) used in the proof of Lemma \ref{A1}}
\label{isotopy}
\end{center}
\end{figure}

\clearpage

\bibliographystyle{amsalpha}
\bibliography{references}

\end{document}